\newcommand\ZZ{\mathbb{Z}}
\newcommand\NN{\mathbb{N}}
\newcommand\CC{\mathbb{C}}
\newcommand\RR{\mathbb{R}}
\newcommand\QQ{\mathbb{Q}}
\renewcommand\AA{\mathbb{A}}
\newcommand{\OO}{\mathcal{O}}
\newcommand{\K}{K}
\newcommand{\Ok}{\OO_\K} 
\newcommand{\units}{{\Ok^\times}}
\newcommand{\id}[1]{\mathfrak{#1}}
\newcommand{\ppp}{\id p}
\newcommand{\aaa}{\id a}
\newcommand{\bbb}{\id b}
\newcommand{\ddd}{\id d}
\newcommand{\fff}{\id f}
\newcommand{\places}{{\Omega_\K}}
\newcommand{\archplaces}{{\Omega_\infty}}
\newcommand{\nonarchplaces}{{\Omega_0}}
\newcommand{\abs}[1]{\left|#1\right|}
\newcommand{\absv}[1]{\left|#1\right|_v}
\newcommand{\normv}[1]{\left\lVert #1 \right\rVert_v}
\DeclareMathOperator{\N}{\mathfrak{N}}
\newcommand\dd{\,\mathrm{d}} 
\newcommand\xx{\mathbf{x}} 
\newcommand\where{\, : \,}
\DeclareMathOperator{\vol}{vol}
\DeclareMathOperator{\Tr}{Tr}
\DeclareMathOperator{\rank}{rank}
\newcommand\R{R} 
\newcommand\B{\mathcal{B}} 
\newcommand\va{\boldsymbol{\alpha}} 
\newcommand\q{q} 
\newcommand\n{\mathfrak{n}} 
\newcommand\vn{\boldsymbol{\nu}}
\newcommand\vt{\boldsymbol{\theta}}
\newcommand\vv{\mathbf{v}}
\newcommand\vE{\mathbf{E}}
\newcommand\vw{\mathbf{w}}
\newcommand\vy{\mathbf{y}}
\newcommand\vh{\mathbf{h}}
\newcommand\vu{\mathbf{u}}
\newcommand\vX{\mathbf{X}}
\newcommand\vY{\mathbf{Y}}
\newcommand\vW{\mathbf{W}}
\newcommand\vU{\mathbf{U}}
\newcommand\vH{\mathbf{H}}
\newcommand\vg{{\boldsymbol{\gamma}}}
\newcommand{\M}{\mathfrak{M}}
\newcommand{\m}{\mathfrak{m}}
\newcommand{\card}[1]{\# #1}
\newcommand{\PP}{\mathbb{P}}
\newtheorem{theorem}{Theorem}
\newtheorem{lemma}[theorem]{Lemma}
\newtheorem{corollary}[theorem]{Corollary}
\theoremstyle{definition}
\newtheorem*{acknowledgements}{Acknowledgements}
\numberwithin{theorem}{section}
\numberwithin{equation}{section}
\begin{document}

\setcounter{tocdepth}{1}

\title[Forms over number fields]{Forms of differing degrees over number fields}

\author{Christopher Frei}
\address[C. Frei]{Technische Universität Graz,
 Institut für Analysis und Computational Number Theory, Steyrergasse 30/II, A-8010 Graz, Austria}
\email{frei@math.tugraz.at}

\author{Manfred Madritsch}
\address[M. Madritsch]{1. Universit\'e de Lorraine, Institut Elie Cartan de Lorraine, UMR 7502, Vandoeuvre-l\`es-Nancy, F-54506, France;\newline
2. CNRS, Institut Elie Cartan de Lorraine, UMR 7502, Vandoeuvre-l\`es-Nancy, F-54506, France}
\email{manfred.madritsch@univ-lorraine.fr}

\date{\today}

\begin{abstract}
  Consider a system of polynomials in many variables over the ring of
  integers of a number field $K$. We prove an asymptotic formula for
  the number of integral zeros of this system in homogeneously
  expanding boxes. As a consequence, any smooth and geometrically
  integral variety $X\subseteq \PP_K^{m}$ satisfies the Hasse
  principle, weak approximation and the Manin-Peyre conjecture, if
  only its dimension is large enough compared to its degree.

  This generalizes work of Skinner, who considered the case where all
  polynomials have the same degree, and recent work of Browning and
  Heath-Brown, who considered the case where $K=\QQ$. Our main tool is
  Skinner's number field version of the Hardy-Littlewood circle
  method. As a by-product, we point out and correct an error in
  Skinner's treatment of the singular integral.
\end{abstract}

\subjclass[2010] {11G35 (11P55, 14G05)}

%
%

\keywords{Rational points, number fields, Hardy-Littlewood circle
  method, exponential sums, Manin's conjecture}

\maketitle

\tableofcontents

\section{Introduction}\label{sec:introduction}
\subsection{Main result}
Let $K$ be a number field of degree $n$ over $\QQ$. We consider a
system of polynomials in $s$ variables over the ring of integers $\Ok$
of $K$ and let $D$ be the maximum of their degrees. We assume the polynomials
to be ordered by their degrees, that is, for each $d \in \{1, \ldots,
D\}$, we are given polynomials
\begin{equation*}
  G_{d,1}, \ldots, G_{d,t_d} \in \OO_K[x_1, \ldots, x_s]
\end{equation*}
of degree $d$, where $t_d \geq 0$ with $t_D\geq1$. The total number of polynomials is
$T := t_1 + \cdots + t_D$. We are interested in quantitative
statements about the common zeros of these polynomials.

To this end, we fix an integral ideal $\n$ of $\Ok$ and a $\ZZ$-basis
$\omega_1, \ldots, \omega_n$ of $\n$. We will also consider $\omega_1,
\ldots, \omega_n$ as an $\RR$-basis of $V := K \otimes_\QQ \RR$. By a
\emph{box $\mathcal{B}$ aligned to the basis}, we mean the set of all
$\xx = (x_1, \ldots, x_s) \in V^s$, where each $x_i$ has the form
$x_{i,1}\omega_1 + \cdots + x_{i,n}\omega_n$, such that the
coordinates $\vX = (x_{i,j})_{i,j}\in\RR^{ns}$ lie in a given box $B
\subseteq [-1,1]^{ns}$ with sides aligned to the coordinate axes of
$\RR^{ns}$. Given such a box $\mathcal{B}$, we study the asymptotics
of the counting function
\begin{equation*}
  N(P) := \card\{\xx \in \n^s\cap P \mathcal{B}\where G_{d,i}(\xx)=0 \text{ for all }1\leq d \leq D, 1\leq i \leq t_d\}.
\end{equation*}
This counting function is a classical object of interest and has been
investigated in many special cases. To name a few, Birch
\cite{MR0150129} considered forms over $\QQ$ with all degrees
equal. Skinner \cite{MR1446148} generalized Birch's result to
arbitrary number fields $K$. Schmidt \cite{MR781588} and Browning and
Heath-Brown \cite{BrowningHeathBrown2014} considered forms over $\QQ$
whose degrees might be different.

The main purpose of this article is to generalize the work of Browning
and Heath-Brown \cite{BrowningHeathBrown2014} to arbitrary number
fields, just as Skinner did with Birch's work. In addition, we fix 
an error in Skinner's paper \cite{MR1446148}, see
Subsection \ref{subsec:singint}. To state the main result, we need to
introduce some further notation. Let
\begin{equation*}
  \Delta := \{ 1\leq d\leq D \where t_d \geq 1\}.
\end{equation*}
For each $1\leq d \leq D$, $1\leq i\leq t_d$, let $F_{d,i}$ be the leading form of the polynomial $G_{d,i}$, that is, the degree-$d$-part of $G_{d,i}$. For each degree $d\in\Delta$, we consider the $(t_d\times s)$-Jacobian matrix
\begin{equation*}
  J_d(\xx):=
  \begin{pmatrix}
    \nabla F_{d,1}(\xx)\\
    \vdots\\
    \nabla F_{d,t_d}(\xx)
  \end{pmatrix}
\end{equation*}
corresponding to the leading forms of degree $d$ and the affine
variety $S_d \subset \mathbb{A}^s_\K$ defined by the condition
$\rank(J_d(\xx))<t_d$. We define $B_d$ to be the dimension of $S_d$,
and $B_d := 0$ if $d\notin\Delta$. In the following, we always
assume that $B_d < s$.

Let $\mathcal{D}_0:=0$ and, for $1\leq d \leq D$,
\begin{equation*}
  \mathcal{D}_d := t_1 + 2 t_2 + \cdots + d t_d,
\end{equation*}
so that $\mathcal{D}:=\mathcal{D}_D$ is the sum of the degrees of all
our polynomials $G_{d,i}$. Write
\begin{equation}
  \label{eq:def_sd}
  s_d := \sum_{k=d}^D\frac{2^{k-1}(k-1)t_k}{s-B_k}.
\end{equation}
Our main theorem is as follows.

\begin{theorem}\label{thm:main}
  Assume that
  \begin{equation}\label{eq:main_hyp}
\mathcal{D}_d\left(\frac{2^{d-1}}{s-B_d}+s_{d+1}\right)+s_{d+1}+\sum_{j=d+1}^Ds_jt_j < 1
  \end{equation}
  holds for all $d \in \Delta\cup\{0\}$. Then there is a positive
  $\delta$, such that
  \begin{equation*}
    N(P) = \mathfrak{S}\mathfrak{J}\cdot P^{n(s-\mathcal{D})} + O(P^{n(s-\mathcal{D})-\delta})
  \end{equation*}
  for $P\to \infty$. Here, $\mathfrak{S}$ is the usual singular series
  and $\mathfrak{J}$ is the usual singular integral. The implicit
  constant in the error term may depend on $K$, the polynomials
  $G_{d,i}$, $\n$, the basis $\omega_1, \ldots, \omega_n$, and the box
  $\mathcal{B}$, but not on $P$ and the positive constant $\delta$
  depends on $K$ and the systems of forms.
\end{theorem}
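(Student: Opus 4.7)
The plan is to set up Skinner's number field version of the Hardy--Littlewood circle method for the system $(G_{d,i})$ and then adapt the iterative Weyl-differencing scheme of Browning--Heath-Brown \cite{BrowningHeathBrown2014} to the $K$-setting. Concretely, I write $N(P)$ as an integral over $(K\otimes_\QQ\RR/\n^\vee)^T$ of a product of exponential sums
\begin{equation*}
  S(\va;P) = \sum_{\xx\in\n^s\cap P\B} e_K\!\left(\sum_{d,i}\alpha_{d,i}G_{d,i}(\xx)\right),
\end{equation*}
where $e_K$ is Skinner's additive character on $K\otimes_\QQ\RR$ and $\va=(\alpha_{d,i})$. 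Following Skinner, I would decompose the torus into major arcs $\M(Q)$ (tuples close to rationals $\id a/\id q$ with small denominator $\id q$) and minor arcs $\m(Q)=\text{complement}$, with a parameter $Q=P^\theta$ to be chosen later. The counting function then splits as
\begin{equation*}
  N(P) = \int_{\M(Q)}S(\va;P)\,\mathrm{d}\va + \int_{\m(Q)}S(\va;P)\,\mathrm{d}\va.
\end{equation*}

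For the major arcs I would follow Skinner's analysis: after an affine change of variables one reduces $S(\va;P)$ near $\id a/\id q$ to a product of a complete exponential sum in residues modulo $\id q$ and a smooth archimedean integral, producing, after summation over $\id q$, the truncated singular series $\mathfrak{S}(Q)$ and singular integral $\mathfrak{J}(Q)$. Convergence of $\mathfrak{S}$ follows from the same $B_d<s$ bounds that underlie the minor-arc estimates, and the convergence of $\mathfrak{J}$ is treated using the Subsection~\ref{subsec:singint} correction the authors advertise. The main term $\mathfrak{S}\mathfrak{J}\cdot P^{n(s-\mathcal{D})}$ will emerge as the leading contribution.

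The core of the argument, and the main obstacle, is the minor-arc estimate, which is where hypothesis \eqref{eq:main_hyp} is used. The idea is to emulate Browning--Heath-Brown's inductive application of Weyl differencing degree by degree. For each $d\in\Delta$, repeated Weyl differencing applied to the $F_{d,i}$ produces, after $d-1$ differencings, a multilinear form whose associated counting problem is controlled by the codimension $s-B_d$ of the singular locus $S_d$. Carrying this out over $K$ requires Skinner's geometry-of-numbers machinery (counting $\n$-lattice points in skew boxes) in place of the $\QQ$-version, and yields an estimate of the form
\begin{equation*}
  \sup_{\va\in\m(Q)}\abs{S(\va;P)} \ll P^{ns-\eta(Q)},
\end{equation*}
where $\eta(Q)$ is controlled by the left-hand side of \eqref{eq:main_hyp}. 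Crucially, because polynomials of several degrees enter simultaneously, one has to track how differencing of degree-$k$ forms ($k>d$) degrades the bounds needed at level $d$; this is exactly what the nested sum defining $s_d$ in \eqref{eq:def_sd} and the compound left-hand side of \eqref{eq:main_hyp} encode. After integrating over $\m(Q)$ and combining with a trivial bound on $\int|S|$, condition \eqref{eq:main_hyp} for every $d\in\Delta\cup\{0\}$ translates into the existence of $\theta$ (and hence $Q=P^\theta$) making the minor-arc integral $O(P^{n(s-\mathcal{D})-\delta})$ for some $\delta>0$.

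With the major- and minor-arc estimates combined and $Q$ chosen to balance them, one obtains the claimed asymptotic. The delicate part will be a careful bookkeeping of (i) the powers of $P$ lost at each differencing step over $K$ versus the $\QQ$ case, (ii) the role of the basis $\omega_1,\dots,\omega_n$ and the box $\B$ in the archimedean estimates, and (iii) verifying that the singular integral handled with the correction announced in Subsection~\ref{subsec:singint} is finite and positive under the running hypotheses, so that the main term is genuine.
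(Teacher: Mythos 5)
Your overall strategy matches the paper's: Skinner's number-field circle method plus Browning--Heath-Brown's degree-by-degree inductive Weyl differencing, with an iterative lemma producing a dichotomy at each degree $d\in\Delta$ between a good pointwise bound on $|S(\va)|$ and good simultaneous rational approximations to the $\alpha_{d,i}$. The source of the $s_d$ and of hypothesis \eqref{eq:main_hyp} is correctly identified, and you rightly flag the singular integral as requiring genuine number-field care.

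However, your description of the minor-arc estimate contains a real gap. You propose to establish $\sup_{\va\in\m}|S(\va;P)|\ll P^{ns-\eta}$ and then multiply by the volume of $\m$, possibly combined with ``a trivial bound on $\int|S|$.'' This does not work: with the major arcs truncated at denominator $\ll P^{\varpi}$ (and $\varpi$ must be kept small for the major-arc analysis to produce $\mathfrak{S}\mathfrak{J}P^{n(s-\mathcal{D})}$ plus an acceptable error), there are points $\va$ in $\m$ near rationals with denominator only slightly larger than $P^{\varpi}$ where $|S(\va)|$ is not small enough to make a sup bound times $\vol(\m)\le1$ beat $P^{n(s-\mathcal{D})}$. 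The paper avoids this by stratifying $\m$ according to the size $L$ of $|S(\va)|=P^{ns}L$: for each dyadic range $L\in(L_0,2L_0]$, Lemma \ref{lem:iterativeargument} produces rational approximants $q_j\mid q_{j-1}$ with controlled heights $Q_j$, and these are used not to bound the supremum but to bound the \emph{measure} of the relevant set $\mathcal{A}(L_0;I_d^{(1)})$ or $\mathcal{A}(L_0;I^{(2)})$, via counting $q_j$, $\vn_j$ and the volume of the associated boxes (Lemmas \ref{lem:minarcs_d} and \ref{lem:minarcs_2}). For $I^{(2)}$ one additionally needs the observation that large $L_0$ forces $\va\in\M$, so $T(L_0;I^{(2)})=0$ unless $L_0$ is already small. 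Your sketch omits this measure-estimate step, which is the crux of the minor-arc bound.

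A second, more technical, point: the differencing in the multi-degree setting is not plain Weyl differencing. One needs a van der Corput variant (Lemma \ref{lem:differencing}) in which the shift $q\vh$ involves a ring element $q\in\n$ coming from the denominators already extracted at higher degrees, so that the previously well-approximated higher-degree pieces $g$ can be split as $qg=g_1+g_2$ with $g_1$ killed by $\Phi$ and $g_2$ small. This is exactly the divisibility chain $q_k\mid q_j$ that your bookkeeping of the $s_d$ implicitly relies on, but it must be built into the exponential-sum lemma (Lemma \ref{lem:exponentialsums}) rather than treated as an afterthought. Finally, the parameter $\varpi$ in the paper is fixed once and for all ($\varpi=1/(4+(n+1)T)$) and is not optimized against the minor-arc bound; hypothesis \eqref{eq:main_hyp} is used directly in Lemmas \ref{lem:minarcs_D}--\ref{lem:minarcs_2}, with no need to balance $Q=P^\theta$ against the major arcs as your proposal suggests.
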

More precisely, the positive constant $\delta$ in the exponent of the
error term may be chosen depending only on $n,T$, and the margin by
which the inequalities \eqref{eq:main_hyp} are satisfied.

We describe later in this introduction what is meant by the usual
singular series and the usual singular integral. Theorem
\ref{thm:main} is a number field version of \cite[Theorem
1.2]{BrowningHeathBrown2014}. One should note that our hypotheses
\eqref{eq:main_hyp} are exactly the ones used by Browning and
Heath-Brown over $\QQ$. In particular, they are independent of the
degree $n$ of $K$.

In the special case where all degrees
are equal to $D$, our hypotheses \eqref{eq:main_hyp} simplify to
Skinner's condition
\begin{equation}\label{eq:skinner_condition}
  s - B_D > t_D(t_D+1)(D-1)2^{D-1}
\end{equation}
from \cite{MR1446148}, which is the same one as Birch's
\cite{MR0150129}.

Our proof relies on Skinner's number field version of the
Hardy-Littlewood circle method \cite{MR1290198, MR1446148}. There are
only very few other applications of the circle method over number
fields whose results are even close to the best available results over
$\QQ$, and in particular do not depend on the degree $n$ of $K$. See,
for example, \cite{MR1446148,MR3188633,MR3229043}.

\subsection{Consequences}
Here we collect some consequences of Theorem \ref{thm:main}. They are
number field versions of the results stated in the introduction of
\cite{BrowningHeathBrown2014}. We omit most of the proofs, since they
are almost verbatim the same as in \cite{BrowningHeathBrown2014}. The
following corollary provides simpler hypotheses that imply those of
Theorem \ref{thm:main}.

\begin{corollary}
  Let $B_{\max} := \max\{B_d\where d\in\Delta\}$ and
  \begin{align*}
    u_d &:= \sum_{k=d}^D2^{k-1}(k-1)t_k \quad\text{ for }\quad 1\leq d \leq D+1,\\
    s_0(d) &:= \mathcal{D}_d(2^{d-1}+u_{d+1}) + u_{d+1} + \sum_{j=d+1}^Du_jt_j\\
    s_0 &:= \max\{s_0(d)\where d\in\Delta\cup \{0\}\}.
  \end{align*}
  Then the conclusion of Theorem \ref{thm:main} holds whenever $s > B_{\max} +s_0$. 
\end{corollary}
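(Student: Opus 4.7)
The plan is to show that the simpler hypothesis $s > B_{\max}+s_0$ implies every instance of \eqref{eq:main_hyp}, so that Theorem \ref{thm:main} applies directly. The central idea is to replace every denominator $s - B_k$ appearing in \eqref{eq:def_sd} and in \eqref{eq:main_hyp} by the uniform lower bound $s - B_{\max}$, and then to collect all resulting quantities over this common denominator.

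First I would note that $s_0 \geq 0$, so the hypothesis forces $s > B_{\max}$; hence every denominator $s - B_k$ with $k\in\Delta$ satisfies $s - B_k \geq s - B_{\max} > 0$, while the terms with $k\notin\Delta$ in \eqref{eq:def_sd} come with $t_k = 0$ and contribute nothing. Inserting this into \eqref{eq:def_sd} yields the uniform bound $s_d \leq u_d/(s-B_{\max})$ for every $1 \leq d \leq D+1$.

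Next, for each $d \in \Delta\cup\{0\}$, I would substitute these estimates into the three summands on the left-hand side of \eqref{eq:main_hyp}. The factor $2^{d-1}/(s-B_d)$ is at most $2^{d-1}/(s-B_{\max})$ when $d \in \Delta$, and when $d=0$ the (formally awkward) expression $2^{-1}/(s-B_0)$ is multiplied by $\mathcal{D}_0 = 0$ and therefore disappears. Collecting everything over the denominator $s-B_{\max}$ produces precisely $s_0(d)/(s-B_{\max})$ as an upper bound for the left-hand side of \eqref{eq:main_hyp}. The hypothesis $s - B_{\max} > s_0 \geq s_0(d)$ then forces this upper bound to be strictly less than $1$, so \eqref{eq:main_hyp} holds.

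There is no serious obstacle here: the corollary is essentially a bookkeeping consequence of Theorem \ref{thm:main}, obtained by weakening every $s - B_k$ to $s - B_{\max}$. The only point that requires a moment's care is the boundary case $d = 0$, where the term $2^{d-1}/(s-B_d)$ is not literally defined, but this is harmless thanks to the vanishing of $\mathcal{D}_0$.
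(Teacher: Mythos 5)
Your proposal is correct and is the expected argument: replace every $s-B_k$ appearing in \eqref{eq:def_sd} and \eqref{eq:main_hyp} by the uniform lower bound $s-B_{\max}$, which yields $s_d\leq u_d/(s-B_{\max})$ and hence bounds the left-hand side of \eqref{eq:main_hyp} by $s_0(d)/(s-B_{\max})\leq s_0/(s-B_{\max})<1$. The paper omits this proof, referring to the corresponding corollary of Browning and Heath-Brown, and your reasoning (including the careful treatment of the $d=0$ case via $\mathcal{D}_0=0$ and the observation that $t_k=0$ for $k\notin\Delta$) is precisely that routine bookkeeping.
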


Of course, the explicit bounds for $s_0$ in case of systems of
quadratic forms and a form of higher degree, computed in
\cite[Corollary 1.4]{BrowningHeathBrown2014}, and in case of systems
consisting of two forms of differing degrees, computed in
\cite[Corollary 1.5]{BrowningHeathBrown2014} are still valid in our
number field version. Moreover, \cite[Theorem
1.6]{BrowningHeathBrown2014} provides us with the bounds
\begin{equation*}
  s_0 + T - 1 \leq \mathcal{D}^22^{D-1}\leq T^2D^22^{D-1}\\
\end{equation*}
and
\begin{equation}\label{eq:s0_bound}
  s_0 + T - 1 \leq (\mathcal{D}-1)2^{\mathcal{D}}.
\end{equation}
By the last inequality, the condition $s>B_{\max}+s_0$ in the corollary is implied by the original condition \eqref{eq:skinner_condition} of Birch and Skinner in case of a single form of degree $D$.

In the following, we will specialize our results to non-singular systems of leading forms. To the system of forms $F_{d,j}(\xx)$,
$1\leq d \leq D, 1\leq j \leq t_d$, we associate the
$(T\times s)$-Jacobian matrix $J(\xx)$ formed by the partial
derivatives of all $T$ forms $F_{d,j}$ with with respect to all $s$
variables $x_i$. We call the system $(F_{d,j})_{d,j}$ of forms  \emph{non-singular}, if $\rank J(\xx) = T$ for every nonzero
$\xx \in \overline{\QQ}^s$ satisfying $F_{d,j}(\xx) = 0$ for all
$d,j$.

Following \cite{BrowningHeathBrown2014}, we define two systems of
forms $(F_{d,j})_{d,j}$ and $(\tilde{F}_{d,j})_{d,j}$, with
$F_{d,j},\tilde{F}_{d,j} \in \Ok[x_1,\ldots, x_s]$ of degree $d$, to be
\emph{equivalent} if for every pair $(d,j)$ we have
\begin{equation}\label{eq:forms_equivalent}
  \tilde{F}_{d,j} = F_{d,j} - \sum_{i<j}H_{d,i}F_{d,i} - \sum_{e<d}\sum_{i\leq  t_e}H_{e,i}F_{e,i},
\end{equation}
with forms $H_{e,i} \in \Ok[x_1, \ldots, x_s]$ of degree
$d-e$. Moreover, we define a system $(F_{d,j})_{d,j}$ of forms to be
\emph{optimal}, if for any choice of $(d,i)$, the sub-system
\begin{equation*}
  \{F_{d,j}\where j \geq i\} \cup \{F_{e,j}\where d<e\leq D, j\leq t_e\}
\end{equation*}
is nonsingular. In \cite[Section 3]{BrowningHeathBrown2014} it is shown that every nonsingular system of forms is equivalent to an optimal system, and that every optimal system $(F_{d,i})_{d,i}$ satisfies
\begin{equation*}
  B_d \leq t_d + \cdots + t_D - 1 \quad\text{ for all }\quad 1\leq d \leq D,
\end{equation*}
and hence in particular $B_{\max} \leq T - 1$.

Assume we are given a system of polynomials $(G_{d,j})_{d,j}$, with
$G_{d,j}\in \Ok[x_1, \ldots, x_s]$ of degree $d$, with leading forms
$(F_{d,j})_{d,j}$, and a system of forms $(\tilde{F}_{d,j})_{d,j}$
equivalent to $(F_{d,j})_{d,j}$. Applying the expression for
$\tilde{F}_{d,j}$ in \eqref{eq:forms_equivalent} to the polynomials
$G_{d,j}$ instead of the forms $F_{d,j}$, we can easily write down a
system of polynomials $(\tilde{G}_{d,j})_{d,j}$ with leading forms
$(\tilde{F}_{d,j})_{d,j}$, such that the $\tilde{G}_{d,j}$ generate
the same ideal of $\Ok[x_1, \ldots, x_s]$ as the $G_{d,j}$. Hence, we
may replace every system $(G_{d,j})_{d,j}$ with a non-singular system of
leading forms $(F_{d,j})_{d,j}$ by a system $(\tilde{G}_{d,j})_{d,j}$
with an optimal system of leading forms $(\tilde{F}_{d,j})_{d,j}$, and
having the same common zeros.

Together with \eqref{eq:s0_bound}, this allows us to deduce the following generalization of \cite[Theorem 1.7]{BrowningHeathBrown2014}.

\begin{theorem}\label{thm:nonsing_system}
  Suppose that our system of leading forms $(F_{d,j})_{d,j}$ is non-singular and satisfies $s > (\mathcal{D}-1)2^{\mathcal{D}}$. Then there is a positive $\delta$ such that
  \begin{equation*}
    N(P) = \mathfrak{S}\mathfrak{J}\cdot P^{n(s-\mathcal{D})} + O(P^{n(s-\mathcal{D})-\delta}).
  \end{equation*} 
Here, $\mathfrak{J}>0$ whenever the system of equations
\begin{equation*}
  F_{d,j}(\xx)=0 \quad\text{ for }\quad 1\leq d \leq D, 1\leq j \leq t_d
\end{equation*}
has a nonzero solution in the interior of $\mathcal{B}\subset
V$. Moreover, $\mathfrak{S}>0$ whenever the system of equations
\begin{equation*}
  G_{d,j}(\xx)=0 \quad\text{ for }\quad 1\leq d\leq D, 1\leq j \leq t_d
\end{equation*}
has a nonzero solution in the completion $n_\ppp^s \subset
(\Ok)_\ppp^s$ for every prime ideal $\ppp$ of $\Ok$.
\end{theorem}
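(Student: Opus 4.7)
The plan is to derive Theorem \ref{thm:nonsing_system} from Theorem \ref{thm:main} via the reduction to an optimal system, and then to analyze the positivity of the singular integral and singular series separately.

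First I would reduce to the case where the leading system $(F_{d,j})_{d,j}$ is optimal. As explained immediately before the statement, given a non-singular system of leading forms, there is an equivalent optimal system $(\tilde F_{d,j})_{d,j}$, and the transformations \eqref{eq:forms_equivalent} lift to transformations of the polynomials $(G_{d,j})_{d,j}$ that preserve the ideal they generate in $\Ok[x_1,\ldots,x_s]$, hence preserve $N(P)$, the set of $\ppp$-adic zeros, and the set of zeros in $V$. Thus without loss of generality the leading system is optimal, which gives $B_{\max}\leq T-1$. Combining this with the bound \eqref{eq:s0_bound}, namely $s_0+T-1\leq (\mathcal{D}-1)2^\mathcal{D}$, the hypothesis $s>(\mathcal{D}-1)2^\mathcal{D}$ implies $s>B_{\max}+s_0$. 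By the preceding Corollary this verifies the hypotheses \eqref{eq:main_hyp} of Theorem \ref{thm:main}, and the asymptotic formula with a positive $\delta$ follows.

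Next I would address the positivity of $\mathfrak{J}$. The singular integral admits the standard description as a limit of normalized volumes of the set $\{\xx\in P\mathcal{B}\where |F_{d,j}(\xx)|<\varepsilon\text{ for all }d,j\}$ (suitably interpreted in $V^s$). Given a non-zero real solution $\xx_0$ in the interior of $\mathcal{B}$, non-singularity of $(F_{d,j})$ implies that $J(\xx_0)$ has rank $T$, so by the implicit function theorem applied in $V^s\cong \RR^{ns}$ the system $F_{d,j}=0$ cuts out a smooth real manifold of the expected codimension near $\xx_0$. This produces a positive-measure tube whose volume yields $\mathfrak{J}>0$. Conversely, if there is no such real solution, the integrand defining $\mathfrak{J}$ vanishes identically on a neighborhood of any candidate point, so $\mathfrak{J}=0$. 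I would invoke the corresponding analysis from the singular integral section of this paper (which is precisely where the correction to Skinner is carried out) to justify this rigorously.

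Finally, for the singular series I would use its factorization into local densities $\mathfrak{S}=\prod_\ppp \sigma_\ppp$, where $\sigma_\ppp$ is the $\ppp$-adic density of the solution set of the system $G_{d,j}=0$ in $\n_\ppp^s\subset(\Ok)_\ppp^s$. A non-zero $\ppp$-adic solution $\xx_\ppp$ combined with non-singularity of the leading forms lets us apply Hensel's lemma to lift small perturbations, yielding $\sigma_\ppp>0$ at each $\ppp$; for almost all $\ppp$ the Lang–Weil bounds and non-singularity give $\sigma_\ppp=1+O(\Norm(\ppp)^{-1-\eta})$, so the product converges to a positive value. The main obstacle here, and the step most requiring care, is checking that the local factors for the finitely many ramified or bad $\ppp$ are bounded away from zero uniformly enough that convergence of the product is preserved; this is standard once non-singularity is in hand and parallels \cite[Theorem 1.7]{BrowningHeathBrown2014}. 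Conversely, the absence of a $\ppp$-adic solution for some $\ppp$ forces $\sigma_\ppp=0$ and hence $\mathfrak{S}=0$, completing the proof.
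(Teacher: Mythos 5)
Your proposal matches the paper's argument: reduce to an equivalent optimal system (preserving $N(P)$, real and $\ppp$-adic solution sets), use $B_{\max}\leq T-1$ for optimal systems together with \eqref{eq:s0_bound} to verify $s>B_{\max}+s_0$, invoke the Corollary and Theorem~\ref{thm:main} for the asymptotic, and then appeal to the standard positivity criteria for $\mathfrak{J}$ and $\mathfrak{S}$ coming from the smooth complete intersection structure. The paper treats the positivity statements entirely by reference to ``well known facts'' and \cite[Lemma 3.2]{BrowningHeathBrown2014}, whereas you sketch the implicit-function-theorem and Hensel/Lang--Weil arguments; one small correction is that the positivity of $\mathfrak{J}$ is not established in Section~\ref{sec:singint} (which only concerns absolute convergence and its rate), so citing that section for positivity would be misplaced, and you should instead lean on the classical volume-tube argument you outline or an external reference.
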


The conditions under which $\mathfrak{J}>0$ and $\mathfrak{S}>0$
follow from well known facts about $\mathfrak{J}$ and $\mathfrak{S}$
and from the fact that, under the hypotheses of Theorem \ref{thm:nonsing_system}, the system of leading forms $(F_{d,j})_{d,j}$ defines a smooth complete intersection (see
\cite[Lemma 3.2]{BrowningHeathBrown2014}). In particular, the singular series $\mathfrak{S}$ has the usual interpretation as a product of local densities.

Theorem \ref{thm:nonsing_system} has far-reaching consequences for
smooth projective complete intersections. In fact, every smooth
complete intersection $X \subseteq \PP^{s-1}_K$ is defined by a
non-singular system of forms $(F_{d,i})_{d,i}$, and if $X$ is
non-degenerate (not contained in a proper linear subspace of
$\PP^{s-1}_K$), then $\deg(X) \geq \mathcal{D}$.

It is known that an asymptotic formula as in Theorem
\ref{thm:nonsing_system} implies the Hasse principle and weak
approximation for $X$, see \cite{MR1446148}. We can also say something
about the Manin-Peyre conjecture \cite{MR974910,MR1340296}. Let
$\places$ be the set of all places of $K$, and for each $v\in\places$
let $n_v := [K_v : \QQ_v]$ be the local degree at $v$. Let
$\normv{\cdot}$ be any norm on $K_v^{s}$, coinciding with the usual
$\max$-norm if $v$ is non-archimedean. Then
\begin{equation}\label{eq:height}
  H((x_1 : \cdots : x_s)) := \prod_{v\in\places}\normv{(x_1, \ldots, x_s)}^{n_v (s - \mathcal{D})}
\end{equation}
defines an anticanonical height function on the rational points
$X(K)$. The proof of \cite[Theorem 4.8]{Loughran2014} shows that the
conclusion of Theorem~\ref{thm:nonsing_system} implies the Manin-Peyre
conjecture for $X$ with respect to the height $H$.

Thus, every smooth and non-degenerate complete intersection
$X \subseteq \PP_K^{s-1}$ with
\begin{equation*}
  s > (\deg(X)-1)2^{\deg X}
\end{equation*}
satisfies the Hasse principle, weak approximation, and the Manin-Peyre
conjecture for the anticanonical heights defined above.

Browning and Heath-Brown show in \cite{BrowningHeathBrown2014} that
every smooth and geometrically integral variety $X \subseteq
\PP^{m}_\QQ$ satisfying
  \begin{equation}\label{eq:general_degree_cond}
    \dim(X) \geq (\deg(X)-1)2^{\deg(X)}-1
  \end{equation}
is already a complete intersection. Their arguments
hold as well over arbitrary number fields, which provides us with the
following nice consequence of Theorem \ref{thm:nonsing_system},
generalizing \cite[Theorem 1.1]{BrowningHeathBrown2014}.

\begin{theorem}
  Let $X \subseteq \PP^{m}_K$ be a smooth and geometrically integral
  variety satisfying \eqref{eq:general_degree_cond}. Then $X$
  satisfies the Hasse principle, weak approximation, and the
  Manin-Peyre conjecture with respect to the height functions defined
  in \eqref{eq:height}.
\end{theorem}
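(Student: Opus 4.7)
The plan is to reduce the statement to Theorem~\ref{thm:nonsing_system} via the complete-intersection result attributed to Browning--Heath-Brown. As noted just above the statement, their argument over $\QQ$---that a smooth geometrically integral $X \subseteq \PP^m_\QQ$ satisfying \eqref{eq:general_degree_cond} is cut out by a regular sequence---carries over verbatim to $K$; I would simply invoke this, observing that it provides homogeneous forms $F_{d,i} \in K[x_1,\dots,x_s]$ in $s = m+1$ variables, whose number is $T = m - \dim X$ and whose degrees satisfy $\mathcal{D} = \sum_{d,i} d \leq \prod_{d,i} d = \deg X$. Clearing denominators puts the $F_{d,i}$ in $\Ok[x_1,\dots,x_s]$ without changing $X$, and smoothness of $X$ is exactly the assertion that the system $(F_{d,i})_{d,i}$ is non-singular in the sense of the paper.

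Next I would verify the numerical hypothesis of Theorem~\ref{thm:nonsing_system}. The case $X = \PP^m_K$ is trivial (Hasse principle, weak approximation, and Manin--Peyre all hold), so I may assume $T \geq 1$ and hence $s = \dim X + T + 1 \geq \dim X + 2$. Combining this with the monotonicity of $D \mapsto (D-1)2^D$ for $D \geq 1$, the hypothesis \eqref{eq:general_degree_cond} yields
\[
s \;\geq\; \dim X + 2 \;\geq\; (\deg X - 1)2^{\deg X} + 1 \;\geq\; (\mathcal{D}-1)2^{\mathcal{D}} + 1,
\]
so Theorem~\ref{thm:nonsing_system} applies to the system $G_{d,i} := F_{d,i}$ and gives the asymptotic $N(P) = \mathfrak{S}\mathfrak{J} \cdot P^{n(s-\mathcal{D})} + O(P^{n(s-\mathcal{D})-\delta})$ for affine integral zeros in dilating boxes in $V^s$, together with positivity of $\mathfrak{S}$ and $\mathfrak{J}$ under the corresponding local solvability assumptions.

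Finally I would extract the three desired conclusions from this asymptotic. For the Hasse principle, local solvability at every place forces $\mathfrak{S}\mathfrak{J} > 0$ by Theorem~\ref{thm:nonsing_system}, whence $N(P) \to \infty$ and in particular $X(K) \neq \emptyset$. For weak approximation I would follow the argument of Skinner~\cite{MR1446148}: applying the asymptotic to small boxes $\mathcal{B}$ shrunk around prescribed archimedean data, and absorbing finitely many non-archimedean conditions by replacing $\n$ with a suitable sub-ideal, one shows that $X(K)$ is dense in $\prod_{v \in S} X(K_v)$ for any finite $S \subseteq \places$. For the Manin--Peyre conjecture I would invoke the proof of \cite[Theorem~4.8]{Loughran2014}, already cited just before Theorem~\ref{thm:nonsing_system} as deriving Manin--Peyre from precisely an asymptotic of this shape with respect to the anticanonical height~\eqref{eq:height}.

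The only substantive step I would not redo is the implication \emph{$X$ smooth, geometrically integral, satisfying \eqref{eq:general_degree_cond} $\Rightarrow$ $X$ is a complete intersection}, which is the heart of the matter and whose proof (over $K$) is granted by the citation of~\cite{BrowningHeathBrown2014}; the hardest thing to check in that step is that none of the dimension-and-degree arguments used by Browning and Heath-Brown secretly exploits $K = \QQ$, but since they are all intersection-theoretic over $\overline{K}$, this is indeed automatic. Everything else in the present theorem is bookkeeping: package $X$ as a system of forms over $\Ok$, verify the numerical hypothesis just above, apply Theorem~\ref{thm:nonsing_system}, and quote~\cite{MR1446148} and~\cite{Loughran2014}.
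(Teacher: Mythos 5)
Your reduction to Theorem~\ref{thm:nonsing_system} via the Browning--Heath-Brown complete-intersection theorem is exactly the route the paper takes, and the closing invocations of Skinner and Loughran for weak approximation and Manin--Peyre match the paper's as well. However, the numerical verification has a genuine gap: you assert $\mathcal{D}=\sum d \leq \prod d = \deg X$ unconditionally, but this fails as soon as the defining system contains linear forms, i.e.\ as soon as $X$ is degenerate in $\PP^m_K$. Degenerate $X$ are genuinely in scope here --- e.g.\ a smooth quadric threefold $Q \subset \PP^4_K$, re-embedded linearly into $\PP^5_K$, is smooth, geometrically integral, satisfies \eqref{eq:general_degree_cond}, and as a complete intersection in $\PP^5_K$ has $T=2$ (one linear, one quadratic form), $\mathcal{D}=3 > 2 = \deg X$, and $s=6 < 16 = (\mathcal{D}-1)2^{\mathcal{D}}$, so Theorem~\ref{thm:nonsing_system} does \emph{not} apply in that embedding. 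Your trivial case only excludes $X=\PP^m_K$ itself, not proper linear subvarieties or non-linear degenerate $X$.

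The fix is the one the paper has in mind when it inserts the word \emph{non-degenerate} into the intermediate consequence preceding the theorem: first replace $\PP^m_K$ by the minimal linear subspace containing $X$, noting that this changes neither $\dim X$, $\deg X$, nor the anticanonical height up to a bounded factor, so Hasse principle, weak approximation and Manin--Peyre for the reduced embedding are equivalent to those for the original one. After this reduction either $X$ is itself a linear space (trivial), or $X$ is a non-degenerate complete intersection cut out entirely by forms of degree $\geq 2$; only then does $\mathcal{D}\leq\deg X$ hold (the AM--GM-type inequality $\sum d_i \leq \prod d_i$ needs all $d_i\geq 2$), and your inequality chain $s \geq \dim X+2 \geq (\deg X-1)2^{\deg X}+1 \geq (\mathcal{D}-1)2^{\mathcal{D}}+1$ goes through. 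With that one insertion the proposal is correct and matches the paper's argument.
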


\subsection{The circle method over number fields}
Our proof of Theorem \ref{thm:main} relies on the Hardy-Littlewood
circle method, implemented over the number field $K$ by Skinner
\cite{MR1446148}. We start by reviewing some notation from
\cite{MR1446148}.

Let $\places, \archplaces, \nonarchplaces$ denote the sets of all
places, archimedean places, and non-archimedean places of $K$, and
write $K_v$ for the completion of $K$ at the place $v$. If $v \in
\archplaces$ then we identify $K_v$ with the field $\RR$ or $\CC$ in
the usual way.

We identify $V=K\otimes_\QQ \RR$ with $\prod_{v \in
  \archplaces}K_v$. This allows us to naturally define the conjugates
$x^{(v)}\in K_v$ of $x \in V$ via projection and to extend the norm
and trace of $K$ to functions $N : V \to \RR$, $\Tr : V \to \RR$. On
$V$, we moreover have an $\RR$-vector norm defined by
\begin{equation*}
  \abs{x} := \max\{\abs{x_1}, \ldots, \abs{x_n}\} \quad\text{ for }\quad x = x_1\omega_1 + \cdots + x_n\omega_n
\end{equation*}
that satisfies $\abs{x} \asymp \max_{v\in\archplaces}\{\abs{x}_v\}$,
where $\abs{x}_v$ is the usual absolute value on $K_v \in \{\RR,
\CC\}$. We extend the norm
 to $V^s$ via $\abs{\xx} :=
\max_{j=1,\ldots,s}\{\abs{x_j}\}$ for $\xx = (x_1, \ldots, x_s)$.

Let
\begin{equation*}
  R := \{x_1\omega_1 + \cdots + x_n\omega_n \where x_i \in [0,1)\} \subset V.
\end{equation*}
We normalize the Haar measure on $V$ by $\vol(R) = 1$. Elements of
$V^T = \prod_{d=1}^DV^{t_d}$ are written with double indices $\va =
(\alpha_{d,i})_{\substack{1\leq d \leq D\\1\leq i \leq t_d}}$. We write
$e(x) = e^{2\pi i x}$ for $x \in \RR$ and $\Phi(x) = e(\Tr(x))$ for $x
\in V$. The circle method is based on the identity
\begin{equation}\label{eq:circle_method}
  N(P)=\int_{\va\in\R^T}S(\va)\dd\va,
\end{equation}
where
\begin{equation}
  \label{eq:def_S_alpha}
  S(\va) := \sum_{\xx \in \n^s\cap P\B}\Phi\left(\sum_{d=1}^D\sum_{i=1}^{t_D}\alpha_{d,i}G_{d,i}(\xx)\right).
\end{equation}
We divide $\R^T$ into major and minor arcs as follows. Let $\varpi \in
(0,1/3)$ be a fixed constant to be specified in Section
\ref{sec:majorarcs}. For $\gamma\in \K$, we have the denominator ideal
$\aaa_\gamma:=\{\beta\in\Ok\where\beta\gamma\in\n\}$.  For
$\vg=(\gamma_{d,i})_{d,i}\in(\R\cap K)^T$, let
$\aaa_{\vg}:=\bigcap_{d,i}\aaa_{\gamma_{d,i}}$. The major arc
corresponding to $\vg$ is
\begin{equation*}
  \M_\vg := \{\va\in\R^T\where \abs{\alpha_{d,i}-\gamma_{d,i}}\leq P^{-d+\varpi} \text{ for all }1\leq d\leq D, 1\leq i\leq t_d\},
\end{equation*}
where the distance $\abs{\alpha_{d,i}-\gamma_{d,i}}$ is to be
understood modulo $\n$. We define the major arcs
\begin{equation*}
  \M := \bigcup_{\substack{\vg\in(\R\cap K)^T\\\N\aaa_\vg\leq P^{\varpi}}}\M_\vg
\end{equation*}
and the minor arcs
\begin{equation*}
  \m := \R^T\smallsetminus \M.
\end{equation*}
In Section \ref{sec:minorarcs}, we show that, under the
hypotheses of Theorem \ref{thm:main}, the contribution of the minor
arcs $\m$ to the integral in \eqref{eq:circle_method} is absorbed by
the error term. In Sections \ref{sec:majorarcs} and \ref{sec:singint},
we evaluate the contribution of the major arcs $\M$ as
$\mathfrak{S}\mathfrak{J}P^{n(s-\mathcal{D})}+O(P^{n(s-\mathcal{D})-\delta})$,
with the singular series
\begin{equation*}
  \mathfrak{S} = \prod_\ppp\sum_{j=0}^\infty\frac{1}{\N\ppp^{js}}\sum_{\substack{\vg\in(R\cap K)^T\\\aaa_\vg = \ppp^j}}\sum_{\xx \in (\n/\ppp^j\n)^s}\Phi\left(\sum_{d=1}^D\sum_{i=1}^{t_d} \gamma_{d,i}G_{d,i}(\xx)\right)
\end{equation*}
and the singular integral
\begin{equation*}
  \mathfrak{J} = \int_{\vg\in V^T}\int_{\xx\in \mathcal{B}}\Phi\left(\sum_{d=1}^D\sum_{i=1}^{t_d} \gamma_{d,i}F_{d,i}(\xx)\right)\dd\xx\dd\vg.
\end{equation*}
In Sections \ref{sec:exponentialsums-new} and
\ref{sec:iterativeargument}, we prove the main tool to be used in our
estimations, an iterative Weyl-type lemma for the exponential sum
$S(\va)$ that generalizes the version from
\cite{BrowningHeathBrown2014} to number fields.

\subsection{Further notation}
It is sometimes useful to identify $V$ with $\RR^n$ via the basis
$\omega_1, \ldots, \omega_n$. Then $\xx \in V^s$ with
$x_i = x_{i,1}\omega_1 + \cdots + x_{i,n}\omega_n$ is identified with
$\vX = (x_{i,j})_{i,j}\in \RR^{ns}$. The volume on $V$ becomes the
usual Lebesgue measure on $\RR^n$, and the norm $\abs{\cdot}$ becomes
the usual max-norm on $\RR^n$, which we will also denote by
$\abs{\cdot}$. To each polynomial $G \in V[x_1, \ldots, x_s]$, we
associate the polynomial
\begin{equation*}
  G^*(\vX) := \Tr(G(\xx)) \in \RR[\vX]
\end{equation*}
and the system $G_j^*(\vX)$, $1\leq j \leq n$, defined
via
\begin{equation*}
  G_j^*(\vX) := \Tr(\omega_j G(\xx)) \in \RR[\vX].
\end{equation*}
Then any $\xx$ in $V^s$ satisfies $G_{d,i}(\xx) = 0$ for all $d,i$ if
and only if $G^*_{d,i,j}(\vX) = 0$ for all $d,i,j$, and our system of $T$
polynomials in $s$ variables over $\Ok$ is equivalent to a system of
$nT$ polynomials in $ns$ variables over $\ZZ$. In fact, the affine variety defined over $\QQ$ by the polynomials $G_{d,i,j}^*(\vX)$ is the Weil restriction of the $K$-variety defined by the polynomials $G_{d,i}(\xx)$. These identifications allow us to write $S(\va)$ as an exponential sum over $\ZZ^{ns}$:
\begin{equation*}
  S(\va) = \sum_{\vX \in \ZZ^{ns}\cap PB}e\left(\sum_{d=1}^D\sum_{i=1}^{t_d}\sum_{j=1}^n\alpha_{d,i,j}G^*_{d,i,j}(\vX)\right),
\end{equation*}
where $\alpha_{d,i}=\alpha_{d,i,1}\omega_1 + \cdots + \alpha_{d,i,n}\omega_n$.

We denote the standard basis of the free $V$-module $V^s$ by
$\vv_1,\ldots,\vv_s$, and the standard basis of $\RR^{ns}$ by
$\vE_{ij}$ ($i=1,\ldots,s$ and $j=1,\ldots,n$). By our identification,
we obtain $\vE_{ij}=\omega_j\vv_i$.

For $\beta\in\RR$, we write $\lVert\beta\rVert$ for the distance of
$\beta$ to the nearest integer.

For any form $F \in V[x_1, \ldots, x_s]$ of degree $d$, we write
$F(\xx_1 | \cdots | \xx_d)$ for the corresponding polar
$d$-multilinear form, normalized by $d!F(\xx) = F(\xx|\cdots|\xx)$.
Similarly, $F^*(\vX_1|\cdots |\vX_{d})$ is the polar $d$-multilinear
form corresponding to $F^*$. Observe that
$F^*(\vX_1|\cdots |\vX_d) = \Tr(F(\xx_1|\cdots|\xx_d))$.

\subsection{The singular integral}\label{subsec:singint}

Our main task in Section \ref{sec:singint} is to show that the integral
\begin{equation}\label{eq:singint_intro}
  \mathfrak{J}(H) := \int_{\substack{\vg\in V^T\\\abs{\vg}\leq H}}\int_{\xx\in \mathcal{B}}\Phi\left(\sum_{d=1}^D\sum_{i=1}^{t_d} \gamma_{d,i}F_{d,i}(\xx)\right)\dd\xx\dd\vg
\end{equation}
converges absolutely as $H \to \infty$, and that
\begin{equation*}
  \mathfrak{J}(H) - \mathfrak{J}\ll H^{-\delta}
\end{equation*}
for some positive $\delta$, see Lemma \ref{lem:singularintegral}. In
the case where all degrees are equal, i.e. $t_d = 0$ for all $d\neq
D$, this is done by Skinner in \cite[Lemma 9]{MR1446148}. For the
proof, Skinner suggests to think of our forms $F_{D,i}$ as the forms
$F_{D,i,j}^*$ over $\ZZ$ and to apply Schmidt's Lemma 8.1 from
\cite{MR781588}. Schmidt's lemma is a formalization of the classical
indirect approach to the singular integral, already used by Birch
\cite{MR0150129}, where the Weyl-type lemma used in the treatment of
the minor arcs is applied once more to bound the inner integral in
\eqref{eq:singint_intro}. Hence, it depends on a certain hypothesis,
called the \emph{restricted hypothesis} by Schmidt. Applied to our
forms $F_{D,i,j}^*$, it requires that at least one of the following
  alternatives hold for some $\Omega>nt_D+1$ and each $\Delta\in(0,1]$:
  Every $\va\in R^{t_D}$ satisfies
\begin{enumerate}
\item[(i)] $|S(\va)| \leq P^{ns-\Delta\Omega}$, or
\item[(ii)] there is $q\in\NN$, $q \leq P^\Delta$ with
  $\lVert q\alpha_{D,i,j}\rVert\leq P^{-D+\Delta}$ for all $1\leq i \leq
  t_D$ and $1\leq j \leq n$.
\end{enumerate}
Skinner gives no argument why this hypothesis would hold. In fact, we
were not able to deduce it from either Skinner's Weyl-type lemma
\cite[Lemma 2]{MR1446148}, or Birch's Weyl-type lemma \cite[Lemma
2.5]{MR0150129} applied to the $F_{D,i,j}^*$, without replacing the
lower bound \eqref{eq:skinner_condition} on the number of variables
$s$ by the stronger bound
\begin{equation}\label{eq:stronger_bound}
  s-B_D > t_D(n t_D + 1)(D-1)2^{D-1},
\end{equation}
and we see no reason why it should hold otherwise. Let us note that
with the stronger assumption \eqref{eq:stronger_bound} instead of
\eqref{eq:skinner_condition}, the main theorem of \cite{MR1446148}
would follow directly from the techniques of \cite{MR0150129} applied
to the $G_{D,i,j}^*$.

In Section \ref{sec:singint}, we apply genuine number field arguments
to our treatment of the singular integral, culminating in Lemma
\ref{lem:singularintegral}. When all degrees are equal to $D$, the
hypothesis \eqref{eq:degree_condition} of Lemma
\ref{lem:singularintegral} is exactly Skinner's hypothesis
\eqref{eq:skinner_condition}, so we prove \cite[Lemma 9]{MR1446148}
as a special case of Lemma \ref{lem:singularintegral}.

\section{Exponential sums}\label{sec:exponentialsums-new}
For a function $f : V^s \to \RR$ and $\vh \in V^s$, we write
$\Delta_\vh(f)(\xx) := f(\xx+\vh)-f(\xx)$ for the usual forward
difference operator. The following lemma is a number field analog of
van der Corput's variant of Weyl differencing.

\begin{lemma}\label{lem:differencing}
  Let $q \in \Ok\setminus\{0\}$ and $H \in \ZZ$ with $1 \leq H
  \ll P/\abs{q}$. Let $f : V^s \to \RR$ and $\mathcal{I}$ be a box
  aligned to the basis $\omega_1, \ldots, \omega_n$.  Then
  \begin{equation*}
    \abs{\sum_{\xx\in\n^s\cap P\mathcal{I}}\Phi(f(\xx))}^2 \ll
    \left(\frac{P}{H}\right)^{ns}\sum_{\substack{\vh\in\n^s\\\abs{\vh}<H}}\abs{\sum_{\xx\in
        \n^s\cap P\mathcal{I}'(\vh)}\Phi(\Delta_{q\vh}(f)(\xx))}, 
  \end{equation*}
  where $\mathcal{I}'(\vh)$ is a box aligned to the basis
  $\omega_1, \ldots, \omega_n$ that depends on $\vh$. The implicit
  constant depends only on $K$ and $s$.
\end{lemma}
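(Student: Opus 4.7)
The plan is a number-field adaptation of the classical van der Corput variant of Weyl differencing. Set $A:=\n^s\cap P\mathcal{I}$, $S:=\sum_{\xx\in A}\Phi(f(\xx))$, and introduce the lattice ball of shifts $\mathcal{H}:=\{\vh\in\n^s\where\abs{\vh}<H/2\}$. Since $\n$ is a rank-$n$ $\ZZ$-lattice in $V$, one has $\card{\mathcal{H}}\gg H^{ns}$ with an implicit constant depending only on $K$ and $s$ (the finitely many small $H$ for which this estimate is not sharp can be absorbed into the implicit constant of the lemma). The hypothesis $H\ll P/\abs{q}$ ensures that for every $\vh\in\mathcal{H}$ the translate $P\mathcal{I}+q\vh$ is contained in $P\mathcal{I}''$ for some slightly enlarged, basis-aligned box $\mathcal{I}''$ with $\card{\n^s\cap P\mathcal{I}''}\ll P^{ns}$.

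By translation invariance of the sum, one has $\card{\mathcal{H}}\cdot S=\sum_{\vh\in\mathcal{H}}\sum_{\vy\in A+q\vh}\Phi(f(\vy-q\vh))$; swapping the order so that $\vy\in\n^s\cap P\mathcal{I}''$ becomes the outer variable and applying Cauchy--Schwarz in $\vy$ produces
\[
\card{\mathcal{H}}^2\abs{S}^2\ll P^{ns}\sum_{\vh_1,\vh_2\in\mathcal{H}}\sum_{\vy}\Phi\bigl(f(\vy-q\vh_1)-f(\vy-q\vh_2)\bigr),
\]
with the inner sum restricted to $\vy$ satisfying $\vy-q\vh_i\in A$ for $i=1,2$. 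The substantive step is then the substitution $\vh:=\vh_2-\vh_1$, $\vz:=\vy-q\vh_1$: the phase becomes $\Phi(\Delta_{q\vh}f(\vz'))$ for a suitable shift of $\vz$, and, crucially, the joint constraint collapses into $\vz\in A\cap(A+q\vh)$, independently of $\vh_1$. Summing trivially over admissible $\vh_1\in\mathcal{H}$ costs at most a factor $\card{\mathcal{H}}$.

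It remains to observe that, in the coordinates $\vX=(x_{i,j})$ identifying $V^s$ with $\RR^{ns}$, the intersection $A\cap(A+q\vh)$ equals $\n^s\cap P\mathcal{I}'(\vh)$ with $\mathcal{I}'(\vh):=\mathcal{I}\cap(\mathcal{I}+q\vh/P)$ -- a box aligned to the basis, since the intersection of two axis-aligned boxes in $\RR^{ns}$ is again axis-aligned. Inserting $\card{\mathcal{H}}\asymp H^{ns}$ and enlarging $\mathcal{H}-\mathcal{H}$ to $\{\vh\in\n^s\where\abs{\vh}<H\}$ yields the claimed inequality. The only real point of care is the bookkeeping for the enlarged sets $\mathcal{I}''$ and $\mathcal{I}'(\vh)$ and for the lattice-point count $\card{\mathcal{H}}\asymp H^{ns}$; the conceptual content is the standard van der Corput observation that, after one translation of the summation origin, the joint shift condition on $\vy$ decouples from one of the two shift parameters.
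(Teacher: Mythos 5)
Your argument is correct and is essentially the paper's own van der Corput step: one translation of the summation origin, Cauchy--Schwarz in the outer variable, and a change of variables $(\vh_1,\vh_2)\mapsto(\vh_1,\vh_2-\vh_1)$ that makes the remaining constraint depend only on $\vh_2-\vh_1$. The only cosmetic difference is in the choice of shift set: the paper takes the set $R^*$ of $\vu=u_1\omega_1+\cdots+u_n\omega_n$ with $u_j\in\{1,\ldots,H\}$, which has exactly $H^{ns}$ elements for every integer $H\geq 1$ and hence removes the need to dispose of small $H$ separately, whereas you use the symmetric ball $\{\vh\in\n^s:\abs{\vh}<H/2\}$ with only an asymptotic count $\asymp H^{ns}$; your remark that bounded $H$ is absorbed into the implicit constant closes that gap, but the paper's choice avoids it entirely.
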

 
\begin{proof}
  The proof is analogous to the version over $\QQ$ (see the proof of
  \cite[Lemma 4.1]{BrowningHeathBrown2014}). Let $\chi_{P\mathcal{I}}$
  be the indicator function of $P\mathcal{I}$. Let $R^*\subset
  V$ be the set of all $\vu = u_1\omega_1 + \cdots + u_n\omega_n \in
  V$ with $1 \leq u_j \leq H$ for all $1\leq j \leq n$. Then
  \begin{align*}
    H^{ns}\sum_{\mathbf{x}\in \n^s\cap P\mathcal{I}}\Phi(f(\xx))
    &=\sum_{\substack{\vu\in(\n\cap R^*)^s}}\sum_{\mathbf{x}\in\n^s}\Phi(f(\xx+q\vu))\chi_{P\mathcal{I}}(\xx+q\vu)\\
    &=\sum_{\substack{\xx\in\n^s\\\abs{\xx}\ll
        P}}\sum_{\substack{\vu\in(\n\cap R^*)^s}}
    \Phi(f(\xx+q\vu)\chi_{P\mathcal{I}}(\xx+q\vu).
  \end{align*}
  Here, we used that $\chi_{P\mathcal{I}}(\xx+q\vu)\neq 0$ implies
  $\abs{\xx}\leq P + \abs{q\vu}\ll P + \abs{q}\abs{\vu}\ll P$. By
  Cauchy's inequality,
  \begin{align*}
    H^{2ns}\abs{\sum_{\mathbf{x}\in \n^s\cap
      P\mathcal{I}}\Phi(f(\xx))}^2 &\ll
    P^{ns}\sum_{\substack{\xx\in\n^s\\\abs{\mathbf{x}}\ll P}}
    \abs{\sum_{\substack{\vu\in(\n\cap R^*)^s}}
    \Phi(f(\xx+q\vu))\chi_{P\mathcal{I}}(\xx+q\vu)}^2\\
    &=P^{ns}\sum_{\substack{\vh\in\n^s\\\abs{\vh} < H}}n(\vh)\sum_{y\in\n^s}\Phi(f(\vy+q\vh))\chi_{P\mathcal{I}}(\vy+q\vh)
    \overline{\Phi(f(\vy))}\chi_{P\mathcal{I}}(\vy),
\end{align*}
where
\[n(\vh)=\#\left\{(\vu,\vv)\in(\n\cap R^*)^{2s}\where \vh=\mathbf{v}-\mathbf{u}\right\}\leq H^{ns}.\]
Thus,
\begin{align*}
  \abs{\sum_{\mathbf{x}\in \n^s\cap
    P\mathcal{I}}\Phi(f(\xx))}^2 &\ll
  \left(\frac{P}{H}\right)^{ns}\sum_{\substack{\vh\in\n^s\\\abs{\vh}<
     H}} \abs{\sum_{\vy\in\n^s\cap P\mathcal{I}'(\vh)}\Phi(f(\vy+q\vh))\overline{\Phi(f(\vh))}},
\end{align*}
where $\mathcal{I}'(\vh) \subseteq \mathcal{I}$ is a box aligned to the
basis $\omega_1,\ldots, \omega_n$, depending on $\vh$.
\end{proof}

Let $f,g\in V[x_1,\ldots,x_s]$ with $\deg f\leq d$. Suppose that
$qg=g_1+g_2$ with $q\in\n\smallsetminus\{0\}$,
$g_1\in\mathcal{O}_K[x_1,\ldots,x_s]$ and $g_2\in V[x_1,\ldots,x_s]$
such that
all coefficients $a_j$ of $g_2$ of each degree $j$ satisfy
\begin{equation}\label{eq:g2_partial_bounds}
  \abs{a_j} \ll_{j} \varphi P^{-j},
\end{equation}
for some $\varphi \geq \abs{q}$.

Let $\mathcal{B}'$ be a box aligned to the basis
$\omega_1, \ldots,\omega_n$. We are interested in the estimation of
the exponential
sum \[\Sigma:=\sum_{\mathbf{x}\in\n^s\cap P\mathcal{B}'}\Phi(f(\mathbf{x})+g(\mathbf{x})).\]
In the process, $f$ will be the dominant polynomial whereas $g$
originates from the higher exponents which are already well
approximable. We aim for an estimate of the form
\[\lvert\Sigma\rvert=P^{ns}L\]
with small $L$. Let $F$ be the homogeneous part of $f$ of degree $d$,
and recall that $(qF)^*(\vX_1|\cdots|\vX_d)$ is the $d$-multilinear
polar form corresponding to the form $(qF)^*(\vX)$.

\begin{lemma}\label{lem:exponentialsums}
For $M\geq1$, we have
\begin{gather}\label{eq:expsumsetimate}
L^{2^{d-1}}\ll P^{-(d-1)ns}\left(\varphi
  M\right)^{(d-1)ns}(\log P)^{ns}\mathcal{M},
\end{gather}
where $\mathcal{M}$ is the number of all $(\xx_1,\ldots,\xx_{d-1})\in(\n^s)^{d-1}$ satisfying
\begin{align*}
  \lvert\mathbf{\xx}_i\rvert&\leq\frac{P}{\varphi M}& \quad &\text{ for all $1\leq i <d$, and }\\
  \lVert (qF)^*(\mathbf{X}_1\vert\cdots\vert\mathbf{X}_{d-1}\vert
  \vE_{i,j})\rVert&\leq\frac1{P\varphi^{d-2}M^{d-1}}& \quad &\text{ for all $1\leq i\leq s, 1\leq j\leq n$.}
\end{align*}
\end{lemma}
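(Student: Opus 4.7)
The plan is to iterate Lemma \ref{lem:differencing} exactly $d-1$ times, alternated with Cauchy's inequality, in order to reduce the degree of the phase in $\xx$ from $d$ down to $1$. Once the phase is linear, the inner sum over the sub-box decouples into a product of one-variable geometric sums; counting tuples $(\vh_1,\ldots,\vh_{d-1})$ for which all of these are simultaneously large then yields the quantity $\mathcal{M}$.

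Set $H := \lfloor P/(\varphi M)\rfloor$. The hypothesis $\varphi\geq\abs{q}$ together with $M\geq 1$ ensures $H\ll P/\abs{q}$, so Lemma \ref{lem:differencing} applies at every step. Applying it $d-1$ times in succession and carefully tracking the factors $(P/H)^{ns}$ (from the lemma) and $H^{ns}$ (from the intervening Cauchy--Schwarz steps), one obtains
\begin{equation*}
\abs{\Sigma}^{2^{d-1}} \ll (\varphi M)^{(d-1)ns}\,P^{(2^{d-1}-d)ns}\sum_{\substack{\vh_1,\ldots,\vh_{d-1}\in\n^s\\ \abs{\vh_k}<H}}\abs{T(\vh_1,\ldots,\vh_{d-1})},
\end{equation*}
where $T(\vh_1,\ldots,\vh_{d-1}) = \sum_{\xx\in\n^s\cap P\mathcal{B}''}\Phi(\Delta_{q\vh_{d-1}}\cdots\Delta_{q\vh_1}(f+g)(\xx))$ and $\mathcal{B}''\subseteq\mathcal{B}'$ is an $\vh$-dependent sub-box aligned to the basis.

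Because $\deg f\leq d$, the phase of $T$ has degree at most $1$ in $\xx$, with linear part $d!\,(F+G)(q\vh_1|\cdots|q\vh_{d-1}|\xx)$ where $G$ is the degree-$d$ part of $g$. Writing $\xx=\sum_{i,j}x_{ij}\omega_j\vv_i$ with $x_{ij}\in\ZZ$, and using the product structure of $\mathcal{B}''$, the sum $T$ factorises as a product of one-variable geometric sums, each bounded by $\min(P,\lVert L_{ij}\rVert^{-1})$ for the real coefficient $L_{ij}:=d!\,(F+G)^*(q\vH_1|\cdots|q\vH_{d-1}|\vE_{ij})$. One next rewrites $L_{ij}$ via $qg=g_1+g_2$: the $(g_1)_{(d)}$-contribution is integer-valued (since $g_1\in\Ok[\xx]$ and all arguments of the polar form lie in $\Ok$, while $q^{d-2}\in\Ok$), hence vanishes modulo $1$; the $(g_2)_{(d)}$-contribution is, by \eqref{eq:g2_partial_bounds} together with $\abs{\vh_k}<P/(\varphi M)$ and $\abs{q}\leq\varphi$, bounded by $O(1/(P\varphi^{d-2}M^{d-1}))$. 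Combined with the multilinearity of the polar form (which is used to trade one power of $q$ for the redefinition from $F^*$ to $(qF)^*$), this identifies the effective smallness condition on $\lVert L_{ij}\rVert$ with the threshold $\lVert(qF)^*(\vH_1|\cdots|\vH_{d-1}|\vE_{ij})\rVert\leq 1/(P\varphi^{d-2}M^{d-1})$ appearing in the lemma.

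Finally, the standard dyadic-dissection inequality
\begin{equation*}
\sum_{\substack{\abs{\vh_k}<H\\\vh_k\in\n^s}}\prod_{i,j}\min(P,\lVert L_{ij}\rVert^{-1})\ll P^{ns}(\log P)^{ns}\,\mathcal{M}
\end{equation*}
converts the estimate into the count $\mathcal{M}$; combining with the prefactor $(\varphi M)^{(d-1)ns}P^{(2^{d-1}-d)ns}$ and with $\abs{\Sigma}=P^{ns}L$ yields \eqref{eq:expsumsetimate}. The main obstacle is the step in the preceding paragraph that identifies $\lVert L_{ij}\rVert$ with $\lVert(qF)^*(\vH_1|\cdots|\vH_{d-1}|\vE_{ij})\rVert$ up to the target tolerance: since $\Tr\colon V\to\RR$ is $\QQ$-linear but not $K$-linear, the several $q$-factors inside $\Tr(q^{d-1}F(\vh_1|\cdots|\omega_j\vv_i))$ cannot be pulled out as real scalars, and one has to combine the multilinearity of the polar form, the integrality of the $(g_1)_{(d)}$-contribution under multiplication by $q^{d-2}$, and the coefficient bound \eqref{eq:g2_partial_bounds} on $g_2$ in order to land precisely on the threshold $1/(P\varphi^{d-2}M^{d-1})$ announced in the lemma.
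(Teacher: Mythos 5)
The gap you flag at the end is real and fatal to the plan as stated. Running Lemma~\ref{lem:differencing} $d-1$ times, each time with the given $q$, produces the phase $\Delta_{q\vh_1}\cdots\Delta_{q\vh_{d-1}}(f+g)$, whose linear-in-$\xx$ part coming from $F$ is $F(q\vh_1|\cdots|q\vh_{d-1}|\vv_p)=q^{d-1}F(\vh_1|\cdots|\vh_{d-1}|\vv_p)$, carrying $d-1$ powers of $q$. Since $\Tr\colon V\to\RR$ is only $\QQ$-linear, the surplus $q^{d-2}$ cannot be pulled out of $\lVert\Tr\bigl(\omega_\ell\, q^{d-1}F(\vh_1|\cdots|\vh_{d-1}|\vv_p)\bigr)\rVert$ as a real scalar, so there is no route to $\lVert(qF)^*(\vH_1|\cdots|\vH_{d-1}|\vE_{p,\ell})\rVert$; and renaming $\vh_i':=q\vh_i$ to absorb the powers enlarges the ranges to $|\vh_i'|\ll P/M$, so the resulting count is not $\mathcal M$. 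The companion bound you state for the $g_2$-contribution, $O\bigl(1/(P\varphi^{d-2}M^{d-1})\bigr)$, is also inconsistent with the $q^{d-2}$ multiplier your own phase carries: with $|q|\le\varphi$ the true bound is only $O\bigl(1/(PM^{d-1})\bigr)$. The paper avoids all of this by performing the first $d-2$ applications of Lemma~\ref{lem:differencing} with $q:=1$ and $H:=P$, and only the $(d-1)$-st with the given $q$ and $H:=\lfloor P/\varphi\rfloor$; the linear phase is then $qF(\vh_1|\cdots|\vh_{d-2}|\vw|\vy)$ with a single $q$, matching $(qF)^*$ exactly, and the $q^{-1}g_1$-part becomes an $\Ok$-polynomial after just one $q$-differencing.

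Two further points. First, $g$ is not assumed to have degree $\le d$ (in the application it has degree up to $D>d$), so $\Delta(f+g)$ is \emph{not} of degree $\le 1$ in $\xx$; the residual higher-degree part of $\Delta(q^{-1}g_2)$, though small by \eqref{eq:g2_partial_bounds}, must be removed by multidimensional partial summation before one can pass to products of $\min(P,\lVert\cdot\rVert^{-1})$. Second, in the paper the shrinking of the $\vh$-ranges from $P$ to $P/(\varphi M)$ and the appearance of the factor $(\varphi M)^{(d-1)ns}$ occur \emph{after} the differencing, via the counting argument of \cite{BrowningHeathBrown2014} from their display (4.5) onward, rather than by taking $H=P/(\varphi M)$ inside Lemma~\ref{lem:differencing}. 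Your idea of building $M$ directly into $H$ is appealing and could in principle give a shorter proof, but, as explained above, only if $q$ is injected into exactly one of the $d-1$ differencing steps; as written, the approach does not close.
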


\begin{proof}
  This is mostly analogous to the proof of \cite[Lemma 4.1]{BrowningHeathBrown2014}. The lemma holds trivially if $\varphi \geq
  P$. Hence, we assume that $\varphi \leq P$.

  We start by $d-2$ Weyl differencing steps, that is $d-2$ applications of Lemma \ref{lem:differencing} with $q:=1$, $H:=P$, linked by Cauchy's inequality). This yields
\begin{gather}\label{eq:weyl_diff}
L^{2^{d-2}}
\ll P^{-ns(d-1)}\sum_{\substack{\vh_1\in\n^s\\\lvert\mathbf{h}_1\rvert<P}}\cdots
\sum_{\substack{\vh_{d-2}\in\n^s\\\lvert\mathbf{h}_{d-2}\rvert<P}}
\abs{\sum_{\mathbf{x}\in \n^s\cap P\mathcal{I}}\Psi(\mathbf{x})},
\end{gather}
where
\[\Psi(\mathbf{x}):=\Phi\left(\Delta_{\mathbf{h}_1,\ldots,\mathbf{h}_{d-2}}(f+g)(\mathbf{x})\right)\]
and $\mathcal{I}\subset\mathcal{B}'$ is a box aligned to the basis $\omega_1, \ldots, \omega_n$ that depends on $\vh_1, \ldots, \vh_{d-2}$.

For the $(d-1)$-st differencing step, we choose $q$ as in the setup
before our Lemma and
\[H:=\left\lfloor\frac{P}{\varphi}\right\rfloor \geq 1.\]
By Lemma \ref{lem:differencing},
\begin{align*}
\abs{\sum_{\mathbf{x}\in \n^s\cap P\mathcal{I}}\Psi(\mathbf{x})}^2
&\ll\varphi^{ns}\sum_{\substack{\vw\in\n^s\\\lvert\mathbf{w}\rvert<H}}\abs{ \sum_{\vy\in
    \n^s\cap P\mathcal{I}'}\Psi(\mathbf{y}+q\mathbf{w})\overline{\Psi(\mathbf{y})}},
\end{align*}
where $\mathcal{I}' \subseteq \mathcal{I}$ is a box aligned to the
basis $\omega_1,\ldots, \omega_n$, depending on $\vw$. Together with
Cauchy's inequality and \eqref{eq:weyl_diff}, this yields
\begin{align*}
L^{2^{d-1}}
&\ll P^{-nsd}\varphi^{ns}
\sum_{\substack{\vh_1\in\n^s\\\lvert\mathbf{h}_1\rvert<P}}\cdots
\sum_{\substack{\vh_{d-2}\in\n^s\\\lvert\mathbf{h}_{d-2}\rvert<P}}\sum_{\substack{\vw\in\n^s\\\lvert\mathbf{w}\rvert<H}}
  \abs{\sum_{\vy\in
    \n^s\cap P\mathcal{I}'}\Psi(\mathbf{y}+q\mathbf{w})\overline{\Psi(\mathbf{y})}}.
\end{align*}
Note that $\Psi$ implicitly depends on
$\mathbf{h}_1,\ldots,\mathbf{h}_{d-2}$, and $\mathcal{I}'$ depends on $\vh_1,
\ldots, \vh_{d-2}$ and $\vw$.

Now we take a closer look at the summands of the innermost sum:
\[\Psi(\mathbf{y}+q\mathbf{w})\overline{\Psi(\mathbf{y})}
=\Phi(\Delta_{\mathbf{h}_1,\ldots,\mathbf{h}_{d-2},q\mathbf{w}}(f+g)(\mathbf{y})).\]
Recall that $F$ is the leading form of $f$ of degree $d$.  Then by
linearity of $F(\xx_1\vert\cdots\vert\xx_d)$ we have that
\begin{align*}
\Delta_{\mathbf{h}_1,\ldots,\mathbf{h}_{d-2},q\mathbf{w}}(f)(\mathbf{y})
&=qF(\mathbf{h}_1\vert\cdots\vert\mathbf{h}_{d-2}\vert
\mathbf{w}\vert\mathbf{y})+C.
\end{align*}
Now we focus on $g$. Since all coefficients of $\Delta_{\vh_1, \ldots, \vh_{d-2},q\vw}(g_1)$ are $\Ok$-multiples of $q$, we have
\begin{align*}
\Phi(\Delta_{\mathbf{h}_1,\ldots,\mathbf{h}_{d-2},q\mathbf{w}}(q^{-1}g_1)(\mathbf{y}))
=1
\end{align*}
for all $\vy\in\Ok$. Since $\abs{\vh_i}\leq P$ for all
$1\leq i \leq d-2$, it follows from \eqref{eq:g2_partial_bounds} that
each coefficient $b_j$ of $\Delta_{\vh_1,\ldots,\vh_{d-2}}(g_2)$
of any degree $j$ satisfies
\begin{equation*}
  \abs{b_j}\ll_j \abs{\vh_1}\cdots\abs{\vh_{d-2}}\varphi P^{-j-(d-2)}.
\end{equation*}
Since $\abs{q\vw}\ll \abs{q} H\leq \varphi H \ll P$, the coefficients $c_j$ of
$\Delta_{\vh_1, \ldots,\vh_{d-2}, \q\vw}(q^{-1}g_2)$ of degree $j$ are
bounded by
\begin{equation*}
 \abs{c_j} \ll \abs{q^{-1}b_{j+1}q w_\ell}  \ll\abs{b_{j+1}}\abs{\vw} \ll_j\abs{\vh_1}\cdots\abs{\vh_{d-2}}\varphi P^{-j-(d-1)} H \ll P^{-j},
\end{equation*}
where $b_{j+1}$ a coefficient of
$\Delta_{\vh_1,\ldots,\vh_{d-2}}(g_2)$ of degree $j+1$ and $w_\ell$ is
a component on $\vw$.

 Write $\vu := q\vw$. With the tuples $\vH_i, \vU \in \ZZ^{sn}$ corresponding to $\vh_i, \vu\in\n^s$,  we have shown that any $j$-th order partial derivative of $\Delta_{\vH_1, \ldots, \vH_{d-2}, \vU}((q^{-1}g_2)^*)$ is $\ll_j P^{-j}$ uniformly on $[-P,P]^{ns}$.

Let $I'\subseteq [-1,1]^{sn}$ be the box in $\RR^{sn}$ corresponding to $\mathcal{\mathcal{I}}'$. Using the above computations, 
\begin{multline*}
  \sum_{\vy\in
    \n^s\cap P\mathcal{I}'}\Psi(\mathbf{y}+q\mathbf{w})\overline{\Psi(\mathbf{y})}\\ = \sum_{\vY\in\ZZ^{ns}\cap PI'}e((qF)^*(\vH_1|\cdots |\vH_{d-2}|\vW|\vY) + \Tr(C) + \Delta_{\vH_1, \ldots, \vH_{d-2}, \vU}((q^{-1}g_2)^*)(\vY)).
\end{multline*}

In the same manner as Browning and Heath-Brown, we apply
multidimensional partial summation and our uniform bounds for the
partial derivatives of
$\Delta_{\vH_1,\ldots,\vH_{d-2},\vU}((q^{-1}g_2)^*)$ to obtain
\[\sum_{\mathbf{y}\in
  \n^s\cap\mathcal{I}'}\Psi(\mathbf{y}+q\mathbf{w})\overline{\Psi(\mathbf{y})}
\ll\abs{\sum_{\mathbf{Y}\in \ZZ^{ns}\cap
I''}e\left((qF)^*(\vH_1|\ldots|\vH_{d-2}|\vW|\vY)\right)},
\]
with a box $I''\subseteq I'$ aligned to the coordinate axes.
We are now in exactly the same situation as in the proof of
\cite[Lemma 4.1]{BrowningHeathBrown2014}, just in Dimension $sn$
instead of $n$ and with $\varphi$ instead of $q\varphi$.  What remains of the proof is identical to the arguments of \cite{BrowningHeathBrown2014} starting at (4.5), just with tuples $\vY, \vH_j, \vW$ of $sn$ variables instead of tuples $\vy, \xx_j, \vw$ of $n$ variables.
\end{proof}

\section{The iterative argument}\label{sec:iterativeargument}
Our aim in this section is to find a Weyl-type estimate for the
exponential sum $S(\alpha)$ defined in~\eqref{eq:def_S_alpha}. To this end, we write
\begin{equation*}
  |S(\alpha)| = P^{ns}L.
\end{equation*}
We define $Q_{D+1}:= 1$ and, for $d\in\Delta$,
\begin{equation}\label{eq:Q_j_def}
  Q_d := (\log P)^{e(d)}L^{-s_d/n},
\end{equation}
where $e(d)$ is an explicit but irrelevant exponent which could be
computed from the arguments in the proof of Lemma
\ref{lem:iterativeargument}. For those $1\leq d\leq D$ with
$d\not\in\Delta$ we set $Q_d:=Q_k$, where $k$ is the smallest integer
bigger than $d$ in $\Delta$. Similarly, we can extend the definition of
the exponents $e(d)$ to these values.

For $j \in \Delta$, we consider upper
bounds
\begin{equation}
  \label{eq:L_bound_j}
  L^{2^{j-1}}\leq \left(\frac{Q_{j+1}}{P}\right)^{n(s-B_j)}(\log P)^{ns+1}.
\end{equation}
Let $I_d^{(1)}$ be the set of all $\va \in \R^T$ such that
\eqref{eq:L_bound_j} holds for $j = d$ but fails for every $j >
d$.
Moreover, let $I^{(2)}$ be the set of all $\va \in \R^T$ such that
\eqref{eq:L_bound_j} fails for all $j \in \Delta$. We are going to
prove the following number field analogue of \cite[Lemma 6.2]{BrowningHeathBrown2014}.

\begin{lemma}\label{lem:iterativeargument}
  Let $d \in \Delta$ and $P \gg 1$. If $\va \in I_d^{(1)}$ then
  \begin{equation}\label{iterativeestimate}
    L^{2^{d-1}+(s-B_d)s_{d+1}}\ll P^{-n(s-B_d)+\epsilon}.
  \end{equation}
  Moreover, there are $\q_j \in \n$, $\vn_j \in \n^{t_j}$ for all $d <
  j \leq D, j \in \Delta$ satisfying
  \begin{align}
    q_k &\mid \q_j&& &\text{ for all }k > j, k \in \Delta\label{eq:q_j_divisibility}\\
    \abs{\q_j}&\leq Q_j&& &\label{eq:q_j_bound}\\
    \abs{\q_j\alpha_{j,i} - \nu_{j,i}}&\leq Q_jP^{-j}&& &\text{ for all }
    1\leq i\leq t_j.\label{eq:q_j_approx}
  \end{align}
  If $\va \in I^{(2)}$ then there are $\q\in \n$,
  $\vn_j\in\n^{t_j}$ for all $j \in \Delta$, satisfying
  \eqref{eq:q_j_divisibility}, \eqref{eq:q_j_bound}, and
  \eqref{eq:q_j_approx}.
\end{lemma}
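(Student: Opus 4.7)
The plan is a downward induction on $d\in\Delta$, starting from $d=D$, that mimics the iterative structure of \cite[Lemma 6.2]{BrowningHeathBrown2014}. At each stage I apply Lemma~\ref{lem:exponentialsums} to $S(\va)$ written in a form where the contributions of the $\alpha_{k,i}G_{k,i}$ with $k>d$ have been absorbed into the auxiliary polynomial $g$ of that lemma, via the higher-degree approximations $\q_k,\vn_k$ supplied by the induction hypothesis. For each $d$ this produces a dichotomy: either the Weyl-type inequality \eqref{eq:L_bound_j} holds at $j=d$, in which case the corresponding $\va$ lies in $I_d^{(1)}$ and \eqref{iterativeestimate} follows by unwinding the definition of $Q_{d+1}$, or \eqref{eq:L_bound_j} fails at $j=d$ and a number-field geometry-of-numbers argument extracts the next pair of approximations $\q_d,\vn_d$.

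The base case $d=D$ applies Lemma~\ref{lem:exponentialsums} directly to $f=\sum_{k,i}\alpha_{k,i}G_{k,i}$ with $g=0$, $q=1$, $\varphi=1$, and a suitable parameter $M$; since $Q_{D+1}=1$, this yields $L^{2^{D-1}}\ll P^{-(D-1)ns}(\log P)^{ns}\mathcal{M}$, where $\mathcal{M}$ counts $(D-1)$-tuples $(\xx_1,\ldots,\xx_{D-1})\in(\n^s)^{D-1}$ satisfying the multilinear height conditions in $(F_D)^*$ with $F_D:=\sum_i\alpha_{D,i}F_{D,i}$. In the inductive step I take $q:=\q_{j^*}$, where $j^*$ is the smallest $j\in\Delta$ with $j>d$; by \eqref{eq:q_j_divisibility}, every higher $\q_k$ divides $q$, so $q\sum_{k>d,\,k\in\Delta}\sum_i\alpha_{k,i}G_{k,i}(\xx)$ can be split as $g_1+g_2$ with $g_1\in\Ok[x_1,\ldots,x_s]$ coming from the numerators and $g_2\in V[x_1,\ldots,x_s]$ absorbing the errors $q\alpha_{k,i}-(q/\q_k)\nu_{k,i}$. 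A direct computation using \eqref{eq:q_j_approx} and the monotonicity $Q_{d+1}\geq Q_k$ for $k>d+1$ shows that each coefficient of $g_2$ of degree $\ell$ is $\ll Q_{d+1}P^{-\ell}$, so \eqref{eq:g2_partial_bounds} is satisfied with $\varphi=Q_{d+1}$. Lemma~\ref{lem:exponentialsums} applied to this setup bounds $L^{2^{d-1}}$ in terms of a count $\mathcal{M}$ of $(d-1)$-tuples satisfying multilinear height conditions in $(qF_d)^*$, where $F_d:=\sum_i\alpha_{d,i}F_{d,i}$. If \eqref{eq:L_bound_j} holds at $j=d$, substituting $Q_{d+1}=(\log P)^{e(d+1)}L^{-s_{d+1}/n}$ produces \eqref{iterativeestimate}; if instead \eqref{eq:L_bound_j} fails at $j=d$, I extract $\q_d\in\n$ and $\vn_d\in\n^{t_d}$ satisfying \eqref{eq:q_j_bound} and \eqref{eq:q_j_approx}, building in the divisibility $q\mid\q_d$ so that \eqref{eq:q_j_divisibility} is preserved.

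The main obstacle is the \emph{counting-to-approximation} step that converts the lower bound on $\mathcal{M}$ forced by the failure of \eqref{eq:L_bound_j} into denominators $\q_d$ and numerators $\vn_d$ of the claimed shape. Over $\QQ$ this is a duality argument based on Davenport's lemma on small fractional parts; over $K$ one has to work in the lattice $\n^{t_d}\subset V^{t_d}$ with the $\RR$-bilinear pairing $(\va,\vg)\mapsto\Tr(\sum_i\alpha_i\gamma_i)$, invoking the number-field analogue of Davenport's lemma as in \cite[Lemmas 4 and 12]{MR1446148}. The singular variety $S_d$ enters at this point through a number-field variant of Birch's shrinking argument \cite[Lemma 3.3]{MR0150129}, which yields the exponent $s-B_d$ in \eqref{eq:L_bound_j} alongside the factor $2^{d-1}$ characteristic of $(d-1)$-fold Weyl differencing. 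Beyond this, the bookkeeping around the implicit exponents $e(d)$ and the propagation of log-factors through the induction is routine.
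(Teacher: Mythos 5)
Your high-level plan — a downward induction over $d\in\Delta$, at each step applying Lemma~\ref{lem:exponentialsums} with the higher-degree terms absorbed into $g$ via the previously-constructed approximations, splitting $qg=g_1+g_2$, and then a dichotomy between \eqref{eq:L_bound_j} and extraction of a new $(q_d,\vn_d)$ — is exactly the structure of the paper's proof. The paper merely packages the one-step dichotomy into a separate auxiliary Lemma~\ref{lem:iterationlemma} and iterates it; your choice $q:=q_{j^*}$, $\varphi:=Q_{d+1}$, the verification of \eqref{eq:g2_partial_bounds}, and the substitution of $Q_{d+1}=(\log P)^{e(d+1)}L^{-s_{d+1}/n}$ to obtain \eqref{iterativeestimate} all match. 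Where you diverge is the extraction step that you (rightly) flag as the main obstacle. You propose to run a Davenport-type duality argument via the trace pairing on the lattice $\n^{t_d}$, appealing to Skinner's diophantine approximation lemmas. The paper does something more direct, following Browning--Heath-Brown: after Lemma~\ref{lem:ptsonaffinesubvar} rules out (via a contradiction with $M_1>1$) the possibility that all tuples counted by $\mathcal{M}$ lie on $\widehat S_d$, it picks a \emph{single} tuple off $\widehat S_d$, takes a full-rank $t_d\times t_d$ submatrix $W$ of $\widehat J_d$, sets $q^*:=\det(\Omega)\det W$ with $\Omega=(\Tr(\omega_\ell\omega_k))$, and solves the linear system $B_p^*\Omega=A_p+D_p$ to produce $\vn_d$; the divisibility $q\mid q_d$ is then automatic from $q_d:=qq^*$. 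Both mechanisms are legitimate, but the determinant-based extraction gives the bounds $\abs{q^*}\ll(P/QM)^{t_d(d-1)}$ and the approximations in one clean stroke without needing a separate geometry-of-numbers input, and keeps the divisibility for free, whereas the Davenport route would require extra care to preserve \eqref{eq:q_j_divisibility}. You do note that the divisibility must be built in, so the plan is sound; it simply takes a different sub-lemma where the paper takes a more elementary linear-algebra path.
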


The idea is to iteratively apply Lemma
\ref{lem:exponentialsums}. Recall that $\vv_j$ denotes the $j$-th
element of the standard basis of $V^s$. For $d\leq D$, we define the
matrix
\begin{equation*}
  \widehat{J}_d(\xx_1,\ldots,\xx_{d-1}) := \left(F_{d,i}(\xx_1|\cdots|\xx_{d-1}|\vv_j)\right)_{\substack{1\leq i\leq t_d\\1\leq j\leq s}}
\end{equation*}
and the corresponding affine variety
$\widehat{S}_d \subseteq (\AA_K^{s})^{d-1}$ defined by the condition
\begin{equation*}
\rank(\widehat{J}_d(\xx_1,\ldots,\xx_{d-1}))<t_d.
\end{equation*}
We need an estimate for the number of integral points on
$\widehat{S}_d$ of bounded norm. Let
\begin{equation*}
  \mathcal{M}_0(P) := \#\{(\xx_1, \ldots, \xx_{d-1})\in \widehat{S}_d(K)\cap (\n^s)^{d-1}\where \abs{\xx_i}\leq P \text{ for all } 1\leq i <d\}.  
\end{equation*}

\begin{lemma}\label{lem:ptsonaffinesubvar}
  For $P \geq 1$, we have
  \[\mathcal{M}_0(P)\ll P^{n(B_d+s(d-2))}.\]
\end{lemma}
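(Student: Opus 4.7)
The plan is to first establish a $K$-dimension bound $\dim_K \widehat{S}_d \leq B_d + s(d-2)$, and then pass to integer points via the Weil restriction of $\widehat{S}_d$ down to $\QQ$.

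Since the dimension of $\widehat{S}_d$ is a purely algebro-geometric invariant, the bound will follow from the same argument as for systems of forms over $\QQ$, carried out in \cite[Section 3]{BrowningHeathBrown2014} as a generalization of Birch's classical auxiliary-variety estimate \cite[Lemma 3.3]{MR0150129}. The key observation is that $(\xx_1, \ldots, \xx_{d-1}) \in \widehat{S}_d$ iff there exists $[\lambda] \in \PP^{t_d-1}$ with $\lambda^T \widehat{J}_d(\xx_1, \ldots, \xx_{d-1}) = 0$, equivalently $(\xx_1, \ldots, \xx_{d-1})$ lies in Birch's auxiliary variety $V^*_{H_\lambda}$ for the single form $H_\lambda := \sum_i \lambda_i F_{d,i}$. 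Applying Birch's lemma to $H_\lambda$ gives $\dim V^*_{H_\lambda} \leq s(d-2) + \dim \mathrm{sing}(H_\lambda) \leq s(d-2) + B_d$, using that $\mathrm{sing}(H_\lambda) = \{\xx : \lambda^T J_d(\xx) = 0\} \subseteq S_d$. Introducing the incidence variety $Y \subseteq (\AA_K^s)^{d-1} \times \PP^{t_d-1}$ and comparing its fibers over $\PP^{t_d-1}$ with those of the auxiliary variety $Z := \{(\xx, [\lambda]) : \lambda^T J_d(\xx) = 0\} \subseteq \AA^s \times \PP^{t_d - 1}$ of dimension $B_d$, one sees that the apparent $(t_d-1)$-contribution from $\dim \PP^{t_d-1}$ is compensated by the correspondingly smaller generic fiber dimension of $\mathrm{sing}(H_\lambda)$, so that in total $\dim_K \widehat{S}_d \leq B_d + s(d-2)$.

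Granted this bound, the count follows by a standard lattice-point argument. Using the integral basis $\omega_1, \ldots, \omega_n$ to identify $\n^s$ with $\ZZ^{ns}$, the set $\widehat{S}_d(K) \cap (\n^s)^{d-1}$ corresponds to the $\QQ$-integral points of the Weil restriction $\mathrm{Res}_{K/\QQ}(\widehat{S}_d) \subseteq \AA^{ns(d-1)}_\QQ$, a $\QQ$-variety of dimension at most $n(B_d + s(d-2))$. A standard bound on lattice points of bounded-degree affine $\QQ$-varieties in a box of side $\ll P$ then yields $\mathcal{M}_0(P) \ll P^{n(B_d + s(d-2))}$. The main obstacle in this plan is the cancellation of the $(t_d-1)$-factor in the dimension argument for $\widehat{S}_d$, but since this is purely dimension-theoretic, it can be taken over verbatim from \cite{BrowningHeathBrown2014}.
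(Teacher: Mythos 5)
Your overall structure matches the paper's: bound $\dim_K\widehat{S}_d\leq B_d+s(d-2)$ and then count integral points on a variety of known dimension. Passing to the Weil restriction $\mathrm{Res}_{K/\QQ}(\widehat{S}_d)$ and applying the $\QQ$-counting lemma is a legitimate alternative to what the paper does, namely invoking Skinner's number-field counting lemma \cite[Lemma 3]{MR1446148} directly on the $K$-variety $\widehat{S}_d$; the two routes are essentially interchangeable.

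However, the incidence-variety sketch you give for the dimension bound does not work as stated. You assert that $Z=\{(\xx,[\lambda])\where \lambda^T J_d(\xx)=0\}$ has dimension $B_d$, but this is false in general: the projection $Z\to S_d$ is surjective with fibres that are projective linear spaces of dimension $t_d-1-\rank J_d(\xx)$, and if $\rank J_d$ is generically $<t_d-1$ on some component of $S_d$ (e.g.\ when two of the forms $F_{d,i}$ coincide) then $\dim Z>B_d$. In that case the inequality $\dim Y\leq s(d-2)+\dim Z$ — which is the content of the ``compensation'' — only yields $\dim\widehat{S}_d\leq s(d-2)+\dim Z$, strictly weaker than what is needed. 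The argument used in \cite{BrowningHeathBrown2014} (going back to Birch) is the diagonal intersection: since $\widehat{J}_d(\xx,\ldots,\xx)=(d-1)!\,J_d(\xx)$, the intersection of $\widehat{S}_d$ with the diagonal $\Delta\cong\AA^s_K$ inside $(\AA^s_K)^{d-1}$ is exactly $S_d$, and the affine intersection dimension inequality gives
\[B_d=\dim(\widehat{S}_d\cap\Delta)\geq\dim\widehat{S}_d+\dim\Delta - s(d-1)=\dim\widehat{S}_d-s(d-2),\]
i.e.\ $\dim\widehat{S}_d\leq B_d+s(d-2)$, with no incidence variety and no need for the claim $\dim Z=B_d$. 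You do correctly defer the details to \cite{BrowningHeathBrown2014}, so your final conclusion is sound, but the intermediate reasoning you sketch would not close.
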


\begin{proof}
  As in \cite[Lemma 5.1]{BrowningHeathBrown2014}, using \cite[Lemma
  3]{MR1446148} instead of \cite[Lemma 3.1]{MR0150129}.
\end{proof}

The main tool for the proof of Lemma \ref{lem:iterativeargument} is the
following iterative argument.
 
\begin{lemma}\label{lem:iterationlemma}
Let $\abs{S(\va)}=P^{ns}L$ and $d\in\Delta$. Furthermore suppose that
\begin{itemize}
  \item either $d=D$, $q=1$ and $Q=1$, 
  \item or $d<D$, and there exist $Q\geq 1$ and $q\in\n$ with
    $\abs{q}\leq Q$, and $\vn_{j}\in\n^{t_j}$, such that
\[\abs{q\alpha_{j,i}-\nu_{j,i}}\leq QP^{-j}\quad\text{for }d<j\leq D\text{
  and }1\leq i\leq t_j.\]
\end{itemize}
Then, for $P$ sufficiently large, either
\[L^{2^{d-1}}\leq\left(\frac QP\right)^{n(s-B_d)}(\log P)^{ns+1},\]
or there exists $q^*\in\n$ with
\[\abs{q^*}\leq Q^*
:=\left(\frac{(\log
    P)^{ns+1}}{L^{2^{d-1}}}\right)^{\frac{t_d(d-1)}{n(s-B_d)}}(\log
P)\]
and $\vn_{d}\in\n^{t_d}$, such that
\[\abs{q^*q\alpha_{d,i}-\nu_{d,i}}\leq QQ^*P^{-d}\text{ for }1\leq i\leq t_d.\]
\end{lemma}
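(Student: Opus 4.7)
The strategy is to apply Lemma \ref{lem:exponentialsums} to $S(\va)$ after isolating the degree-$d$ contribution of the phase, then split the resulting count according to whether the tuples lie on the degeneracy locus $\widehat{S}_d$. Write the phase as $f(\xx)+g(\xx)$, where $f:=\sum_{d'\leq d}\sum_i\alpha_{d',i}G_{d',i}$ has degree $d$ with leading form $F:=\sum_i\alpha_{d,i}F_{d,i}$, and $g:=\sum_{d'>d}\sum_i\alpha_{d',i}G_{d',i}$ (with $g\equiv 0$ when $d=D$). Using the approximations of the hypothesis, decompose $qg=g_1+g_2$ with $g_1:=\sum_{d'>d}\sum_i\nu_{d',i}G_{d',i}\in\Ok[\xx]$; each coefficient of $g_2$ of degree $k\leq d'$ is then $\ll|q\alpha_{d',i}-\nu_{d',i}|\ll QP^{-d'}\leq QP^{-k}$, verifying \eqref{eq:g2_partial_bounds} with $\varphi:=Q\geq|q|$. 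Lemma \ref{lem:exponentialsums} yields, for every $M\geq 1$,
\[L^{2^{d-1}}\ll P^{-(d-1)ns}(QM)^{(d-1)ns}(\log P)^{ns}\mathcal{M}(M),\]
where $\mathcal{M}(M)$ counts tuples $(\xx_1,\ldots,\xx_{d-1})\in(\n^s)^{d-1}$ of norm $\leq P/(QM)$ satisfying $\|(qF)^*(\vX_1|\cdots|\vX_{d-1}|\vE_{i,j})\|\leq\delta:=(PQ^{d-2}M^{d-1})^{-1}$ for all $(i,j)$.

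Next, I would split $\mathcal{M}(M)=\mathcal{M}_1(M)+\mathcal{M}_2(M)$ depending on whether $\rank\widehat{J}_d(\xx_1,\ldots,\xx_{d-1})<t_d$ or $=t_d$. Lemma \ref{lem:ptsonaffinesubvar} bounds $\mathcal{M}_1(M)\ll(P/(QM))^{n(B_d+s(d-2))}$, and after cancellation the degenerate contribution to $L^{2^{d-1}}$ is of order $(QM/P)^{n(s-B_d)}(\log P)^{ns}$. Choosing
\[M:=\left(\frac{L^{2^{d-1}}}{(\log P)^{ns+1}}\right)^{1/n(s-B_d)}\frac{P}{Q},\]
arranges that $M\geq 1$ is equivalent to the failure of the first alternative $L^{2^{d-1}}\leq(Q/P)^{n(s-B_d)}(\log P)^{ns+1}$. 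Assuming this failure, the degenerate contribution is only $\ll L^{2^{d-1}}/\log P$, so for $P\gg 1$ the non-degenerate count must be nonzero: there is a tuple $(\xx_1,\ldots,\xx_{d-1})$ off $\widehat{S}_d$ meeting the near-integer conditions.

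From such a tuple I would extract $q^*$ and $\vn_d$ via Cramer's rule. Using $\vE_{i,j}=\omega_j\vv_i$ and multilinearity of the polar form, the near-integer conditions say that for each $i$ the element $\beta_i:=q\sum_k\alpha_{d,k}F_{d,k}(\xx_1|\cdots|\vv_i)\in V$ lies within $O(\delta)$ of $\n$ (up to discriminant factors absorbed into implicit constants), where distance is measured coordinate-wise via the $\omega_j$-basis. Because $\widehat{J}_d$ has rank $t_d$, a $t_d\times t_d$ submatrix $A=(F_{d,k}(\xx_1|\cdots|\vv_{i_\ell}))_{k,\ell}$ is invertible over $V$, and its entries are $\n^{d-1}$-valued since $\xx_k\in\n^s$ and $F_{d,k}\in\Ok[\xx]$. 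Setting $q^*:=\det A\in\n^{(d-1)t_d}\subseteq\n$ and applying Cramer to the system $A\cdot(q\alpha_{d,k})_k=(\beta_{i_\ell})_\ell$, each $q^*q\alpha_{d,k}$ becomes a combination of the $\beta_{i_\ell}$ with entries of $\mathrm{adj}(A)$, hence is within $O(\delta\cdot(P/(QM))^{(d-1)(t_d-1)})$ of some $\nu_{d,k}\in\n$. A direct size calculation then yields $|q^*|\ll(P/(QM))^{(d-1)t_d}$ and $|q^*q\alpha_{d,k}-\nu_{d,k}|\ll QQ^*P^{-d}$, matching $Q^*$ up to the $\log P$ factor which absorbs rounding in the choice of $M$.

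The main obstacle is this last step: ensuring $q^*\in\n$ and $\vn_d\in\n^{t_d}$, rather than merely $\Ok$-valued, requires carefully tracking the ideal content of the entries $F_{d,k}(\xx_1|\cdots|\vv_i)\in\n^{d-1}$ of $\widehat{J}_d$ and handling the factor $\n^{-1}\ddd^{-1}$ that appears when converting the $\ZZ$-valued trace conditions back into $V$-valued approximations modulo $\n$. This is the genuinely number-theoretic content that distinguishes Lemma \ref{lem:iterationlemma} from its rational counterpart \cite[Lemma 6.2]{BrowningHeathBrown2014}.
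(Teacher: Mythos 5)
Your overall architecture matches the paper's proof step for step: the same split into $f$ and $g$, the decomposition $qg = g_1+g_2$ with $\varphi:=Q$, the same choice of $M$, the split of $\mathcal{M}$ by whether the tuple lies on $\widehat{S}_d$, the contradiction from Lemma \ref{lem:ptsonaffinesubvar} when all tuples are degenerate, and a Cramer extraction of $q^*,\vn_d$ from a non-degenerate tuple. So the approach is correct. However, the last step has a genuine gap, which you yourself flag at the end; you do not resolve it, and it cannot be waved away.

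The near-integer conditions produced by Lemma \ref{lem:exponentialsums} are $\lVert \Tr(\omega_\ell \beta_p^*)\rVert \leq \delta$, that is, the \emph{trace coordinates} of $\beta_p^*$ are close to $\ZZ^n$. Writing $\beta_p^* = \sum_k \beta_{p,k}^*\omega_k$, these translate into $B_p^*\Omega = A_p + D_p$ where $\Omega = (\Tr(\omega_\ell\omega_k))_{k,\ell}$ is the Gram matrix of the $\ZZ$-basis of $\n$, $A_p\in\ZZ^n$ and $D_p$ is small. Solving gives $B_p^* = A_p\Omega^{-1} + D_p\Omega^{-1}$, and the ``integer'' approximant $A_p\Omega^{-1}$ has entries in $\frac{1}{\det\Omega}\ZZ$, \emph{not} $\ZZ$. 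Consequently the corresponding element $a_p' = \sum_k (A_p\Omega^{-1})_k\omega_k$ lies in $\frac{1}{\det\Omega}\n$, not in $\n$. Your claim that $q^*q\alpha_{d,k}$ is within $O(\cdot)$ of ``some $\nu_{d,k}\in\n$'' with $q^* := \det A$ alone is therefore false: you only reach $\nu_{d,k}\in\frac{1}{\det\Omega}\n$. This is not an ``implicit constant'' issue, since $\det\Omega$ is an explicit integer factor that must be cleared to certify the membership $\vn_d\in\n^{t_d}$.

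The paper's fix is precisely what you suspected was needed: set $q^* := \det(\Omega)\,\widetilde{q}^*$ with $\widetilde{q}^* := \det W$, and define $\vn_d$ by $\vn_d\cdot W = \det(\Omega)\widetilde{q}^*(a_p')_p$. Since $\widetilde{q}^*W^{-1}$ is the adjugate (entries in $\Ok$, indeed in powers of $\n$) and $\det(\Omega)a_p'\in\n$, one gets $\vn_d\in\n^{t_d}$, and the additional $\det\Omega$ factor is an absolute constant that only adjusts implicit constants in the size bounds $|q^*|\ll Q^*$ and $|q^*q\alpha_{d,i}-\nu_{d,i}|\ll QQ^*P^{-d}$. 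With this one additional idea, your argument closes and becomes essentially identical to the paper's.
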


\begin{proof}
  The key tool is Lemma \ref{lem:exponentialsums}. We distinguish the
  two cases $d=D$ and $d<D$:
  \begin{itemize}
  \item $\mathbf{d=D}$: In this case, we choose $\varphi:=1$,
    $g=g_1=g_2:=0$, and
    \[f(\xx):=\sum_{j=1}^D\sum_{i=1}^{t_j}\alpha_{j,i}G_{j,i}(\xx)\]
  \item $\mathbf{d<D}$: Then we let
    \begin{align*}
f(\xx):=\sum_{j=1}^d\sum_{i=1}^{t_j}\alpha_{j,i}G_{j,i}(\xx)\quad\text{and}\quad
g(\xx):=\sum_{j=d+1}^D\sum_{i=1}^{t_j}\alpha_{j,i}G_{j,i}(\xx).
    \end{align*}
    By hypothesis, we have $q\alpha_{j,i}=\nu_{j,i}+\theta_{j,i}$ for
    $d<j\leq D$ and $1\leq i\leq t_j$, with $\abs{\theta_{j,i}}\leq
    QP^{-j}$.
    We write $qg = g_1 +g_2$, where
    \begin{align*}
      g_1 := \sum_{j=d+1}^D\sum_{i=1}^{t_j}\nu_{j,i}G_{j,i}(\xx)\quad\text{ and }\quad
      g_2 := \sum_{j=d+1}^D\sum_{i=1}^{t_j}\theta_{j,i}G_{j,i}(\xx).
    \end{align*}
    This allows us to choose $\varphi := Q$.
  \end{itemize}
  
  Now we apply Lemma \ref{lem:exponentialsums} with $M:=\max\{1,M_1\}$, where
  \[M_1:=\frac P Q\left(\frac{L^{2^{d-1}}}{(\log P)^{ns+1}}\right)^{\frac1{n(s-B_d)}}.\]
  If $M_1\leq1$ then
  \[\frac{L^{2^{d-1}}}{(\log P)^{ns+1}}\leq\left(\frac
    QP\right)^{n(s-B_d)},\]
  as required by the first alternative in the conclusion of the lemma.

  Therefore, we may suppose that $M=M_1>1$ and consider two cases
  according to whether all points
  $(\xx_1, \ldots, \xx_{d-1})\in(\n^{s})^{d-1}$ counted by $\mathcal{M}$
  from Lemma \ref{lem:exponentialsums} are in the affine variety
  $\widehat{S}_d$ or not.

  If they all lie in $\widehat{S}_d$ then an application of Lemma
  \ref{lem:ptsonaffinesubvar} implies
  \[\mathcal{M}\leq\mathcal{M}_0\left(\frac{P}{QM}\right)
  \ll\left(\frac{P}{QM}\right)^{nB_d+ns(d-2)}.\]
  Hence we have
  \begin{align*}
  L^{2^{d-1}}&\ll P^{-(d-1)sn}(QM)^{(d-1)ns}(\log
  P)^{ns}\left(\frac{P}{QM}\right)^{nB_d+ns(d-2)}\\
  &\ll(QM)^{ns-nB_d}P^{nB_d-ns}(\log P)^{ns}\\
  &=\left(\frac{QM}{P}\right)^{n(s-B_d)}(\log P)^{ns}.
  \end{align*}
  Substituting $M$ yields
  \[L^{2^{d-1}}\ll L^{2^{d-1}}(\log P)^{-1}\]
  which is a contradiction for $P$ sufficiently large.

  In the remaining case, we are given a  point
  $(\xx_1,\ldots,\xx_{d-1})\in(\n^s)^{d-1}$ with
  \begin{itemize}
  \item $\abs{\xx_i}\leq\frac{P}{QM}$ for all $1\leq i <d$,
  \item
    $\lVert
    (qF)^*(\vX_1|\cdots|\vX_{d-1}|\vE_{p,\ell})\rVert\leq\frac1{PQ^{d-2}M^{d-1}}$
    for all $1\leq p\leq s$, $1\leq \ell\leq n$, and
  \item $\rank(\widehat{J}_d(\xx_1,\ldots,\xx_{d-1}))=t_d$.
  \end{itemize}

  Without loss of generality, we assume that the matrix $W$ consisting
  of the first $t_d$ columns of $\widehat{J}_d(\xx_1,\ldots,\xx_{d-1})$
  has full rank. Let $\widetilde{q}^*:=\det W$. Then
  $\widetilde{q}^*\in\n$ and
\[\abs{\widetilde{q}^*}\ll\abs{\xx_1}^{t_d}\cdots\abs{\xx_{d-1}}^{t_d}\ll\left(\frac{P}{QM}\right)^{t_d(d-1)}.\]
  We set
\[\beta_p^*=\sum_{k=1}^n\beta_{p,k}^*\omega_k:=qF(\xx_1|\xx_2|\cdots|\xx_{d-1}|\vv_p)
  =\sum_{i=1}^{t_d}\alpha_{d,i}qF_{d,i}(\xx_1|\cdots|\xx_{d-1}|\vv_p).\]
  Then
  \[\Tr(\omega_\ell\beta_p^*)=\Tr\left(\omega_\ell  qF(\xx_1|\cdots|\xx_{d-1}|\vv_p)\right)=(qF)^*(\vX_1|\cdots|\vX_{d-1}|\vE_{p,\ell}),\]
  so
  \[\lVert\Tr(\omega_\ell\beta_p^*)\rVert\leq\frac1{PQ^{d-2}M^{d-1}}.\]
  Thus, we can write
  \begin{equation}\label{eq:linear_system}
  \sum_{k=1}^n\beta_{p,k}^*\Tr(\omega_\ell\omega_k)=\Tr(\omega_\ell\beta_p^*)
  = a_{p,\ell} + d_{p,\ell},
  \end{equation}
  with $a_{p,\ell}\in\ZZ$ and
  $\abs{d_{p,\ell}}\leq (PQ^{d-2}M^{d-1})^{-1}$. With
  \begin{align*}\Omega&:=\left(\Tr(\omega_\ell\omega_k)\right)_{k,\ell=1,\ldots,n}  &
  B_p^*&:=(\beta_{p,1}^*,\ldots,\beta_{p,n}^*)\\
  A_p&:=(a_{p,1},\ldots,a_{p,n}) &
  D_p&:=(d_{p,1},\ldots,d_{p,n}),
  \end{align*}
  we can write \eqref{eq:linear_system} as
  \[B_p^*\Omega=A_p+D_p.\]
  Therefore,
  \[B_p^*=A_p\Omega^{-1}+D_p\Omega^{-1}=:A_p'+D_p',\]
  Write $d_p' := d_{p,1}'\omega_1 + \cdots + d_{p,n}'\omega_n$, where
  $D_p' = (d_{p,1}', \ldots, d_{p,n}')$, and define $a_p'$
  analogously. Then
  \[\abs{d_p'}\ll\max_k\{\abs{d_{p,k}}\}\leq\frac1{PQ^{d-2}M^{d-1}}.\]
  Let $\va_{d} := (\alpha_{d,1}, \ldots, \alpha_{d,t_d})$. On the
  one hand, by our definition of $\beta_p^*$ we see that
  \[\va_d \cdot qW=\left(\beta_p^*\right)_{1\leq p\leq t_d}.\]
  On the other hand, since $W$ has full rank there exists $\vn_d\in\n^{t_d}$ such that
  \[\vn_d\cdot W=\det(\Omega)\widetilde{q}^*(a'_p)_{1\leq p\leq t_d}.\]
  Subtracting one from the other yields
\[(\det(\Omega)\widetilde{q}^*q\va_d-\vn_d)\cdot W=\det(\Omega)\widetilde{q}^*(d_p')_{1\leq
    p\leq t_d}.\]
  We let $q^*:=\widetilde{q}^*\det(\Omega)$ and obtain
  \begin{align*}
  q^*q\va_d-\vn_d&=q^*(d_p')_{1\leq p\leq t_d}W^{-1}\\
  &\ll\abs{\xx_1}^{t_d-1}\cdots\abs{\xx_{d-1}}^{t_d-1}\max_p\{\abs{d_p'}\}\\
  &\ll\left(\frac{P}{QM}\right)^{(t_d-1)(d-1)}\frac1{PQ^{d-2}M^{d-1}}.
  \end{align*}
  Furthermore, we have
  \[\abs{q^*}\ll\abs{\widetilde{q}^*}\ll\left(\frac{P}{QM}\right)^{t_d(d-1)},\]
  and thus
  \[\abs{q^*}\leq Q^*=\left(\frac{P}{QM}\right)^{t_d(d-1)}(\log P)\]
  for large enough $P$.
\end{proof}

Now we are ready to prove Lemma \ref{lem:iterativeargument}.
\begin{proof}[Proof of Lemma \ref{lem:iterativeargument}]
  We iteratively apply the preceding lemma in order to
  reduce the degree of $f$ in every step. In the first step with
  $d=D$, we see that either
  \[L^{2^{D-1}}\leq P^{n(B_d-s)}(\log P)^{ns+1},\]
  and hence $\va\in I_D^{(1)}$, or there is a $q_D\leq Q_D$, with
\[Q_D=\left((\log
  P)^{ns+1}L^{-2^{D-1}}\right)^{\frac{(D-1)t_D}{n(s-B_D)}}\log P,\]
and $\vn_D\in\n^{t_D}$ such that
\[\abs{q\alpha_{D,i}-\nu_{D,i}}\ll QP^{-D}\quad(1\leq i\leq t_D).\]
In the second case, then we apply Lemma \ref{lem:iterationlemma}
with $d:=\max\left\{\Delta\setminus\{D\}\right\}$. Then either
\[L^{2^{d-1}}\leq \left(\frac{Q_D}{P}\right)^{n(s-B_d)}(\log
P)^{ns+1},\]
and thus $\va\in I_d^{(1)}$, or there is a $q_d:=q_Dq^*\leq Q_d:=Q_DQ^*$ with
\[Q^*=\left(\frac{(\log
    P)^{ns+1}}{L^{2^{d-1}}}\right)^{\frac{t_d(d-1)}{n(s-B_d)}}\log
  P,\]
and $\vn_d\in\n^{t_d}$, such that
\[\abs{q_d\alpha_{d,i}-\nu_{d,i}}\leq Q_dP^{-d}\quad(1\leq i\leq t_d).\]
Since we also have
\begin{equation*}
  \abs{q_d\alpha_{D,i}-q^*\nu_{D,i}}\leq Q^*Q_DP^{-D}=Q_dP^{-D},  
\end{equation*}
so we may apply Lemma \ref{lem:iterationlemma} again with the next
lower value of $d$. Iterating this process we get sequences of $q_d$
and $Q_d$ for decreasing values of $d\in\Delta$. The set of $\va$ such
that for all $d\in\Delta$ the second case of Lemma
\ref{lem:iterationlemma} holds is exactly $I^{(2)}$.
\end{proof}

\section{Minor arcs}\label{sec:minorarcs}
First, let us consider the integral of $S(\va)$ over $I_D^{(1)}$.

\begin{lemma}\label{lem:minarcs_D}
  If
  \begin{equation}\label{eq:var_cond_D}
    \mathcal{D}\frac{2^{D-1}}{s-B_D}<1,
  \end{equation}
  then
  \begin{equation*}
    \int_{I_D^{(1)}}\abs{S(\va)}\dd\va \ll P^{n(s-\mathcal{D})-\delta},
  \end{equation*}
  for some $\delta>0$.
\end{lemma}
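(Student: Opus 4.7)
The plan is to exploit the pointwise bound on $|S(\va)|$ that is built into the defining condition of $I_D^{(1)}$, together with the triviality that $I_D^{(1)}\subseteq \R^T$ has bounded measure.

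First I would unpack the definition: for $\va\in I_D^{(1)}$, the inequality \eqref{eq:L_bound_j} holds with $j=D$, and since $Q_{D+1}=1$ by convention, this yields the pointwise estimate
\[
L^{2^{D-1}}\leq P^{-n(s-B_D)}(\log P)^{ns+1}.
\]
Equivalently, since $|S(\va)|=P^{ns}L$, I obtain the uniform bound
\[
|S(\va)|\ll P^{ns-n(s-B_D)/2^{D-1}}(\log P)^{(ns+1)/2^{D-1}}
\]
for every $\va\in I_D^{(1)}$.

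Next I would integrate this bound against the natural measure. Since $\vol(R)=1$ by normalization, we have $\vol(\R^T)=1$, hence $\vol(I_D^{(1)})\leq 1$, and therefore
\[
\int_{I_D^{(1)}}|S(\va)|\dd\va \ll P^{ns-n(s-B_D)/2^{D-1}}(\log P)^{(ns+1)/2^{D-1}}.
\]

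Finally I would compare the exponent with the desired target $n(s-\mathcal{D})$: the difference between the polynomial exponent above and $n(s-\mathcal{D})$ equals
\[
n\mathcal{D}-\frac{n(s-B_D)}{2^{D-1}}=n(s-B_D)\!\left(\mathcal{D}\,\frac{2^{D-1}}{s-B_D}-1\right)\cdot\frac{1}{2^{D-1}}\cdot 2^{D-1},
\]
which is strictly negative by the hypothesis \eqref{eq:var_cond_D}. The strictness provides positive slack that absorbs the logarithmic factor $(\log P)^{(ns+1)/2^{D-1}}$, yielding some $\delta>0$ for which
\[
\int_{I_D^{(1)}}|S(\va)|\dd\va \ll P^{n(s-\mathcal{D})-\delta},
\]
as required. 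There is no real obstacle here: the lemma is essentially a direct consequence of Lemma \ref{lem:iterativeargument} (case $d=D$) combined with the trivial volume bound, and the only thing to verify is that the exponent inequality matches \eqref{eq:var_cond_D}, which it does exactly.
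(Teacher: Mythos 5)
Your proof is correct and follows essentially the same argument as the paper: you extract the pointwise bound on $L$ from the defining inequality of $I_D^{(1)}$ (with $Q_{D+1}=1$), multiply by the trivial bound $\vol(I_D^{(1)})\leq 1$, and compare exponents, with the strict inequality in \eqref{eq:var_cond_D} supplying the slack needed to absorb the logarithmic factor. (There is a small algebraic slip in your displayed factorization of $n\mathcal{D}-n(s-B_D)/2^{D-1}$ — the trailing factor $\cdot\,2^{D-1}$ should be deleted — but this is cosmetic and does not affect the sign argument or the conclusion.)
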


\begin{proof}
  For $\va \in I_D^{(1)}$, we have
  \[L^{2^{D-1}}\leq P^{-n(s-B_D)}(\log P)^{ns+1} \leq
  P^{-n(s-B_D)+\varepsilon}.\] 

  Therefore the integral can be estimated by
\begin{align*}
  \int_{I_D^{(1)}}\abs{S(\va)}\dd\va &\ll\vol(I_D^{(1)})\sup_{\va\in I_D^{(1)}}\abs{S(\va)}\ll 1\cdot P^{ns} P^{\frac{-n(s-B_D)}{2^{D-1}}+\epsilon}\\ &=
  P^{n\left(s-\frac{s-B_D}{2^{D-1}}\right)+\varepsilon}\ll  P^{n(s-\mathcal{D})-\delta}.
\end{align*}
for a suitable $\delta>0$, using \eqref{eq:var_cond_D}, provided that
$\epsilon$ was chosen small enough.
\end{proof}

We split $R^T$
into dyadic sets as follows.  For any $L_0>0$, let
\[\mathcal{A}(L_0):=\left\{\va\in R^T\colon
\abs{S(\va)}=P^{ns}L\text{ with }L_0<L\leq 2L_0\right\}.\]
For $I = I_d^{(1)}$, $d<D$, or $I = I^{(2)}$, we write $\mathcal{A}(L_0;I):=I\cap\mathcal{A}(L_0)\cap \m$ and estimate the integral
\[T(L_0;I):=\int_{\mathcal{A}(L_0;I)}\abs{S(\va)}\dd\va.\]
We will make use of the following facts.

\begin{lemma}\label{lem:count_divisors}
  Let $\epsilon>0$ and $a\in\OO_K$ with $N(a)\leq H$. Then the number of $b \in \OO_K$ with $b \mid a$ and $\abs{b}\leq H$ is $\ll_\epsilon H^\epsilon$. 
\end{lemma}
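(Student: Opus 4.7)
The plan is to factor the count by grouping elements $b$ according to the principal ideal $(b)\subseteq\Ok$ they generate. First, the number of divisor ideals $\bbb\mid(a)$ is at most the ideal-divisor function $d((a))$, which satisfies the standard bound $d(\ccc)\ll_\epsilon\N(\ccc)^\epsilon$ over $\Ok$ (by the same elementary argument as over $\ZZ$, since every prime of $\Ok$ has norm at least $2$, controlling both the number of distinct prime factors and their exponents). Applied here, this contributes at most $H^\epsilon$.

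For each fixed $\bbb=(b_0)$ dividing $(a)$, every $b\in\Ok$ with $(b)=\bbb$ has the form $b=b_0u$ for a unique unit $u\in\units$, so it remains to bound
\[\#\{u\in\units\where\abs{b_0u}\leq H\}\]
uniformly in $b_0$. Using $\abs{x}\asymp\max_{v\in\archplaces}\abs{x}_v$, the condition $\abs{b_0u}\leq H$ translates in the logarithmic embedding to upper bounds $n_v\log\abs{u}_v\leq n_v\log H-\log\abs{b_0}_v^{n_v}+O(1)$ at each $v\in\archplaces$, subject to the hyperplane relation $\sum_v n_v\log\abs{u}_v=0$ coming from $\abs{\Norm(u)}=1$.

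The key observation is that these upper bounds, combined with $\sum_v n_v\log\abs{b_0}_v=\log\abs{\Norm(b_0)}\geq0$ (since $b_0\in\Ok\smallsetminus\{0\}$ forces $\abs{\Norm(b_0)}\geq1$), also force a matching lower bound at every coordinate via the hyperplane relation: any coordinate driven far below $-\log H$ would push some other coordinate above its allowed upper bound. Hence the admissible logarithmic coordinates lie in a polytope of diameter $O(\log H)$ inside the hyperplane, uniformly in $b_0$. By Dirichlet's unit theorem, the logarithmic image of $\units$ modulo the finite group of roots of unity in $K$ is a lattice of rank at most $n-1$ with covolume depending only on $K$, so standard lattice-point counting gives $\ll(\log H)^{n-1}$ such units.

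Combining the two bounds yields the total count $\ll_\epsilon H^\epsilon(\log H)^{n-1}\ll_\epsilon H^\epsilon$, as required. The main obstacle is ensuring uniformity of the unit-counting step in $b_0$; this is precisely where the crude lower bound $\abs{\Norm(b_0)}\geq1$ is essential, as without it the logarithmic region could elongate far beyond $\log H$ in some direction and destroy the uniform estimate.
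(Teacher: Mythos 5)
Your proposal is correct and takes essentially the same approach as the paper: bound the number of divisor ideals of $(a)$ by the standard $\ll_\epsilon H^\epsilon$ divisor estimate, then for each principal divisor bound the number of generators with $\abs{\cdot}\leq H$ via Dirichlet's unit theorem. The paper delegates the unit-counting step to a reference, whereas you spell out the logarithmic-embedding and lattice-point argument explicitly (your intermediate phrasing about coordinates being bounded below by $-\log H$ is slightly loose, but the key claim that the polytope has diameter $O(\log H)$ uniformly in $b_0$, using $\abs{\Norm(b_0)}\geq1$, is correct and is exactly what is needed).
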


\begin{proof}
  There are at most $\ll_\epsilon H^{\epsilon/2}$ ideals of $\OO_K$ dividing the principal ideal $a\OO_K$. Let $\bbb$ be any principal ideal among these divisors. The number of generators of $\bbb$ with all conjugates bounded by $\ll H$ is $\ll_\epsilon H^{\epsilon/2}$, which one can see by counting units with bounded conjugates (for example, as in the proof of \cite[Lemma 7.2]{freipieropan2014}).
\end{proof}

\begin{lemma}\label{lem:L_bounded_below}
  There are positive constants $e_0, e_1$ such that all $\va \in R^T \smallsetminus I_D^{(1)}$ satisfy $L \gg P^{-e_0}$ and $Q_j \ll P^{e_1}$ for all $j$.
\end{lemma}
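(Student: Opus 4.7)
The plan is to derive the lower bound on $L$ directly from the negation of the defining inequality for $I_D^{(1)}$, and then to feed this into the definition \eqref{eq:Q_j_def} to bound each $Q_j$.

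First I would unravel the definition of $I_D^{(1)}$. Since $t_D\geq1$ implies $D\in\Delta$, and since there are no integers $j>D$ with which \eqref{eq:L_bound_j} can be required to fail, $I_D^{(1)}$ is simply the set of $\va\in R^T$ for which \eqref{eq:L_bound_j} holds at $j=D$. Recalling $Q_{D+1}=1$, this reads
\begin{equation*}
  L^{2^{D-1}}\leq P^{-n(s-B_D)}(\log P)^{ns+1}.
\end{equation*}
Hence, for $\va\in R^T\smallsetminus I_D^{(1)}$ the reversed inequality holds, and taking the $2^{D-1}$-th root gives
\begin{equation*}
  L > P^{-n(s-B_D)/2^{D-1}}(\log P)^{(ns+1)/2^{D-1}}\gg P^{-e_0},
\end{equation*}
with $e_0:=n(s-B_D)/2^{D-1}$.

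For the second bound, I would appeal to the definition \eqref{eq:Q_j_def}. For $j\in\Delta$,
\begin{equation*}
  Q_j=(\log P)^{e(j)}L^{-s_j/n}\ll (\log P)^{e(j)}P^{e_0 s_j/n},
\end{equation*}
so any $e_1$ strictly larger than $\max_{j\in\Delta}e_0 s_j/n$ will absorb the logarithmic factor for $P\gg 1$ and give $Q_j\ll P^{e_1}$. For $1\leq j\leq D$ with $j\notin\Delta$, $Q_j$ equals some $Q_k$ with $k\in\Delta$ (such a $k$ exists because $D\in\Delta$), so the same bound applies; and $Q_{D+1}=1\ll P^{e_1}$ trivially.

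There is no real obstacle here: the lemma is essentially a bookkeeping consequence of the way $I_D^{(1)}$ and the $Q_j$ were defined, together with the observation that the quantities $s_j$, $e(j)$, $n$, $s$, $B_D$, and $D$ are all constants independent of $P$.
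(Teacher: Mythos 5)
Your argument is correct and matches the paper's intent exactly: the paper's one-line proof simply says the lower bound on $L$ follows from the definition of $I_D^{(1)}$ and the bound on $Q_j$ follows from that, which is precisely the unwinding you carried out with $e_0 = n(s-B_D)/2^{D-1}$ and $e_1 > \max_{j\in\Delta} e_0 s_j/n$. No gaps.
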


\begin{proof}
  The lower bound for $L$ follows directly from the definition of
  $I_D^{(1)}$. The upper bound for $Q_j$ is an immediate consequence
  of this.
\end{proof}

\begin{lemma}\label{lem:minarcs_d}
  Let $d \in \Delta$, $d<D$. If
  \begin{equation}\label{eq:var_cond_d}
    \mathcal{D}_d\left(\frac{2^{d-1}}{s-B_d}+s_{d+1}\right)+s_{d+1}\sum_{j=d+1}^Ds_jt_j<1,
  \end{equation}
  then $T(L_0;I_d^{(1)}) \ll P^{n(s-\mathcal{D})-\delta}$ for some $\delta>0$.
 \end{lemma}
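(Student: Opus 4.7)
The plan is to combine the two conclusions of Lemma~\ref{lem:iterativeargument} for $\va \in I_d^{(1)}$: the pointwise bound~\eqref{iterativeestimate} on $L$, and the simultaneous rational approximations~\eqref{eq:q_j_divisibility}--\eqref{eq:q_j_approx} at the higher levels $j > d$. From~\eqref{iterativeestimate} together with $L \asymp L_0$ on $\mathcal{A}(L_0)$, I would first record the pointwise estimate $L_0 \ll P^{-\eta}$ with
\[
\eta := \frac{n(s-B_d)}{2^{d-1}+(s-B_d)s_{d+1}} - \epsilon,
\]
so that the defining inequality $\abs{S(\va)} \leq 2L_0 P^{ns}$ on $\mathcal{A}(L_0)$ already carries a saving over the trivial bound.

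Next I would bound $\vol(\mathcal{A}(L_0;I_d^{(1)}))$ by counting the approximation data. For fixed $\q_j \in \n$, the set of $\alpha_{j,i} \in R$ admitting some $\nu_{j,i}\in\n$ with $\abs{\q_j\alpha_{j,i}-\nu_{j,i}} \leq Q_j P^{-j}$ has $V$-measure $\ll (Q_j P^{-j})^n$, a standard Diophantine-approximation estimate using that $R$ is a fundamental domain for $V/\n$. The divisibility chain~\eqref{eq:q_j_divisibility} means that once the coarsest $\q_{d+1}$ is chosen (at most $\ll Q_{d+1}^n$ options), each further $\q_j$ is a divisor of $\q_{d+1}$ of $V$-norm at most $Q_j$, and by Lemma~\ref{lem:count_divisors} there are only $\ll Q_{d+1}^\epsilon$ such choices. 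Coordinates $\alpha_{j,i}$ with $j \leq d$ or $j \notin \Delta$ contribute a factor $1$ via $\vol(R) = 1$, yielding
\[
\vol(\mathcal{A}(L_0;I_d^{(1)})) \ll Q_{d+1}^{n+\epsilon}\prod_{\substack{j > d \\ j \in \Delta}} (Q_j P^{-j})^{nt_j}.
\]

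Multiplying the two bounds, substituting~\eqref{eq:Q_j_def} to replace each $Q_j$ by $L_0^{-s_j/n}$ up to powers of $\log P$, and using $\sum_{j>d,\,j\in\Delta} jt_j = \mathcal{D}-\mathcal{D}_d$, I would obtain
\[
T(L_0;I_d^{(1)}) \ll L_0^A\, P^{n(s-\mathcal{D}+\mathcal{D}_d)+\epsilon}, \qquad A := 1 - s_{d+1} - \sum_{j=d+1}^D s_j t_j.
\]
The hypothesis~\eqref{eq:var_cond_d} forces $A > 0$, so applying the Step~1 bound $L_0 \ll P^{-\eta}$ gives $L_0^A \ll P^{-A\eta}$. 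A direct rearrangement shows that~\eqref{eq:var_cond_d} is equivalent to $n\mathcal{D}_d < A\eta$, which after absorbing the $\epsilon$-loss yields the claimed saving $P^{-\delta}$ for a sufficiently small $\epsilon > 0$.

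The hard part is the measure estimate: without Lemma~\ref{lem:count_divisors} each level of the divisibility chain would naively contribute a factor $Q_j^n$, and the resulting polynomial-in-$P$ loss would overwhelm the Step~1 saving on the pointwise side. The subpolynomial divisor count is what makes the measure depend effectively only on the coarsest $Q_{d+1}$.
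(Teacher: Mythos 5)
Your proposal is correct and follows essentially the same route as the paper: pointwise bound on $L_0$ from \eqref{iterativeestimate}, volume bound from the approximation data of Lemma~\ref{lem:iterativeargument} combined with the divisor count of Lemma~\ref{lem:count_divisors}, then multiply and rearrange. Two small remarks. First, you silently read the condition as $s_{d+1}+\sum_{j=d+1}^{D}s_jt_j$ (which is what the proof needs and what \eqref{eq:main_hyp} says); the ``$+$'' is indeed missing in the displayed form of \eqref{eq:var_cond_d}, so your reading is the intended one. Second, to absorb the factors $Q_{d+1}^{\epsilon}$ and the hidden powers of $\log P$ into a harmless $P^{\epsilon}$, one needs the a priori bounds $L\gg P^{-e_0}$ and $Q_j\ll P^{e_1}$ on $R^T\smallsetminus I_D^{(1)}$ (the paper's Lemma~\ref{lem:L_bounded_below}); you invoke this only implicitly when ``absorbing the $\epsilon$-loss,'' so it would be worth citing. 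Beyond these points, your measure computation (summing over all $\vn_j$ at once rather than counting $\ll|N(q_j)|^{t_j}$ tuples $\vn_j$ and multiplying by the per-tuple measure $\ll|N(q_j)|^{-t_j}Q_j^{nt_j}P^{-jnt_j}$) is only a cosmetic reorganization of the paper's argument and lands on the same bound.
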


\begin{proof}
  By Lemma \ref{lem:iterativeargument}, every $\va \in I_d^{(1)}$ satisfies
  \[L^{\frac{2^{d-1}}{n(s-B_d)}+\frac{s_{d+1}}{n}}\ll
  P^{-1+\varepsilon},\] and there are $q_j\in\n\smallsetminus\{0\},
  \vn_j\in\n^{t_j}$ for all $j\in\Delta$, $j>d$, satisfying
  \eqref{eq:q_j_divisibility}, \eqref{eq:q_j_bound}, and
  \eqref{eq:q_j_approx}. 

Since $q_jR$ is a fundamental domain for the ideal lattice $q_j\n\subseteq \n$ in $V$, there are exactly $\abs{N(q_j)}$ points of $\n$ in $q_jR$. Hence, for any given $q_j$, it is enough to consider $\ll\abs{N(q_j)}^{t_j}$ elements $\vn_j$.

Let us estimate the volume of the set of all $(\alpha_{j,i})_{j>d,1\leq i\leq t_d}$ belonging to a given $q_j, \vn_j$. By \eqref{eq:q_j_approx}, we see that every coordinate $q_j\alpha_{j,i}$ takes values in a set of volume $\leq Q_j^nP^{-jn}$. Since multiplication by $q_j$ is an $\RR$-linear transformation on $V$ of determinant $\asymp N(q_j)$, each $\alpha_{i,j}$ belongs to a set of volume $\ll N(q_j)^{-1}Q_j^nP^{-jn}$, and hence the total volume is $\ll \prod_{j=d+1}^D(\abs{N(q_j)}^{-1}Q_j^n P^{-jn})^{t_j}$. Due to \eqref{eq:q_j_divisibility}, Lemma \ref{lem:count_divisors} and Lemma \ref{lem:L_bounded_below}, each choice of $q_{d+1}$ defines $\ll_\epsilon P^\epsilon$ values of $q_{d+2}, \ldots, q_D$. Summing over all these $q_j$ and all the corresponding $\vn_j$, we see that
\[\vol \mathcal{A}(L_0;I_d^{(1)})\ll P^\varepsilon Q_{d+1}^n\prod_{j=d+1}^D(Q_jP^{-j})^{nt_j}\ll
P^{2\varepsilon-n\sum_{j=d+1}^Djt_j}L_0^{-s_{d+1}-\sum_{j=d+1}^Ds_jt_j}.\]
Therefore,
\begin{align*}
T(L_0;I_d^{(1)})\ll
P^{n(s-\mathcal{D}+\mathcal{D}_d)+2\varepsilon}L_0^{1-s_{d+1}-\sum_{j=d+1}^Ds_jt_j}\ll P^{n(s-\mathcal{D})-\delta},
\end{align*}
as long as \eqref{eq:var_cond_d} holds and $\epsilon$ is small enough.
\end{proof}

Finally, we concentrate on the integral over
$\mathcal{A}(L_0;I^{(2)})$. In particular, we will make use of the
fact that $\mathcal{A}(L_0;I^{(2)})\subseteq\m$.

\begin{lemma}\label{lem:minarcs_2}
  Let $d \in \Delta$, $d<D$. If
  \begin{equation}\label{eq:var_cond_0}
    s_1+\sum_{j=1}^Ds_jt_j<1.
  \end{equation}
  then $T(L_0;I^{(2)}) \ll P^{n(s-\mathcal{D})-\delta}$ for some $\delta>0$.
 \end{lemma}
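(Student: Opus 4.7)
The plan is to follow the argument of Lemma \ref{lem:minarcs_d} with the minimum degree $d^* := \min \Delta$ playing the role of $d+1$, while exploiting the new ingredient $\mathcal{A}(L_0; I^{(2)}) \subseteq \m$ to extract a nontrivial upper bound on $L_0$. By the convention on $Q_j$ for $j \notin \Delta$ and the structure of \eqref{eq:def_sd}, we have $Q_1 = Q_{d^*}$ and $s_1 = s_{d^*}$, which allows one to phrase the final estimate in terms of $s_1$, matching the hypothesis \eqref{eq:var_cond_0}.

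First I would carry out the volume estimate. Lemma \ref{lem:iterativeargument} provides, for each $\va \in I^{(2)}$, integers $q_j \in \n\smallsetminus\{0\}$ and $\vn_j \in \n^{t_j}$ for every $j \in \Delta$ satisfying \eqref{eq:q_j_divisibility}--\eqref{eq:q_j_approx}; in particular $q_j \mid q_{d^*}$ for every $j \in \Delta$. Counting $q_{d^*}$ with $\abs{q_{d^*}}\leq Q_{d^*}$ gives $\ll Q_1^n$ choices; Lemmas \ref{lem:count_divisors} and \ref{lem:L_bounded_below} then give $\ll P^\epsilon$ choices for each remaining $q_j$; for fixed $q_j$ the $\vn_j$ contribute $\ll \abs{N(q_j)}^{t_j}$, and the admissible $\alpha_{j,i}$ fill a set of volume $\ll \abs{N(q_j)}^{-1} Q_j^n P^{-jn}$. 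Multiplying through as in the proof of Lemma \ref{lem:minarcs_d} gives
\begin{equation*}
\vol \mathcal{A}(L_0; I^{(2)}) \ll P^{2\epsilon - n\mathcal{D}}\, L_0^{-s_1 - \sum_{j=1}^D s_j t_j},
\end{equation*}
and hence $T(L_0; I^{(2)}) \ll P^{n(s-\mathcal{D}) + 2\epsilon}\, L_0^{1 - s_1 - \sum_{j=1}^D s_j t_j}$. Under \eqref{eq:var_cond_0} the exponent of $L_0$ is strictly positive, so what remains is to bound $L_0$ from above by a negative power of $P$.

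The crucial new step is to extract such a bound from $\va \in \m$. I would set $\gamma_{j,i} := \nu_{j,i}/q_j$; the divisibility $q_j \mid q_{d^*}$ yields $q_{d^*} \in \aaa_{\gamma_{j,i}}$ for every $(j,i)$, so $\aaa_\vg$ divides $q_{d^*}\Ok$ and hence $\N \aaa_\vg \leq \abs{N(q_{d^*})} \leq Q_1^n$. For the approximation, write
\begin{equation*}
\abs{\alpha_{j,i} - \gamma_{j,i}} = \abs{1/q_j}\cdot\abs{q_j\alpha_{j,i}-\nu_{j,i}} \ll Q_j^n P^{-j} \leq Q_1^n P^{-j},
\end{equation*}
where the key arithmetic input is that $q_j \in \Ok\smallsetminus\{0\}$ satisfies $\abs{N(q_j)} \geq 1$, which combined with the componentwise bound $\abs{q_j}_v \ll Q_j$ forces $\abs{1/q_j} \ll Q_j^{n-1}$. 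If $Q_1^n \leq P^\varpi$ then both $\N\aaa_\vg \leq P^\varpi$ and $\abs{\alpha_{j,i} - \gamma_{j,i}} \leq P^{-j+\varpi}$ hold, placing $\va$ in $\M_\vg \subseteq \M$ and contradicting $\va \in \m$. Therefore $Q_1 > P^{\varpi/n}$, which via \eqref{eq:Q_j_def} translates into $L_0 \ll P^{-\varpi/s_1 + \epsilon}$. Substituting this into the estimate for $T(L_0; I^{(2)})$ and invoking \eqref{eq:var_cond_0} yields $T(L_0; I^{(2)}) \ll P^{n(s-\mathcal{D}) - \delta'}$ for some $\delta' > 0$; summing over the $\ll \log P$ dyadic values of $L_0$ permitted by Lemma \ref{lem:L_bounded_below} then completes the proof.

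The main obstacle will be the minor-arc step, namely verifying carefully that the approximations produced by Lemma \ref{lem:iterativeargument} really do place $\va$ into some major arc $\M_\vg$ whenever $Q_1$ is small. The slightly wasteful bound $\abs{1/q_j} \ll Q_j^{n-1}$ costs a factor of $n$ in the exponent of $Q_j$ but is easily absorbed by choosing the parameter $\varpi$ from Section \ref{sec:majorarcs} sufficiently small.
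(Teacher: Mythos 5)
Your proof is correct and follows the same approach as the paper: the volume estimate is carried over verbatim from Lemma \ref{lem:minarcs_d}, and the required upper bound on $L_0$ is extracted by observing that otherwise the rational approximations produced by Lemma \ref{lem:iterativeargument} would place $\va$ in a major arc $\M_\vg$, contradicting $\va\in\m$. The paper phrases that last step slightly differently (it assumes $L\geq P^{-\varpi/(2s_1)}$ and derives $\va\in\M$, the factor of $2$ quietly absorbing the implicit constants and $\log P$ powers in places where you write $\leq$ but really have $\ll$), but the idea, the use of $q_j\mid q_{d^*}$ and $\abs{q_j^{-1}}\ll Q_j^{n-1}$, and the final conclusion are identical.
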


 \begin{proof}
   For each $\va\in I^{(2)}$, we have $q_d \in \n\smallsetminus \{0\}$
   and $\vn_d \in \n^{t_d}$, $d\in\Delta$, with
   \eqref{eq:q_j_divisibility}, \eqref{eq:q_j_bound} and
   \eqref{eq:q_j_approx}, and as in the previous lemma it suffices to
   consider $\ll\abs{N(q_d)}^{t_d}$ tuples $\vn_d$ for each $q_d$.

   Let $\vg := (q_d^{-1}\nu_{d,i})_{d,i}$. Then it is not hard to see
   that
   \begin{align*}
     \abs{\alpha_{d,i}-\gamma_{d,i}}&\ll Q_d^nP^{-d} \text{ for all $j,d$ and }\\
     \N\aaa_\vg &\ll Q_1^n.
   \end{align*}
   With $e_{\text{max}}:=\max_d\{e(d)\}$, we have
   \begin{equation*}
     Q_d^n \ll L^{-s_1}(\log P)^{n e_{\text{max}}}.
   \end{equation*}
   Let $\varpi$ be as in the definition of the major arcs, and suppose
   that $L \geq P^{-\varpi/(2s_{1})}$. If $P$ is large
   enough, we deduce that
   \begin{align*}
     \abs{\alpha_{d,i}-\gamma_{d,i}}&\ll P^{-d+\varpi} \text{ for all $j,d$ and }\\
     \N\aaa_\vg &\ll P^\varpi,
   \end{align*}
   and hence $\va \in \M$. We conclude that $T(L_0;I^{(2)})=0$ unless 
   \begin{equation}
     L_0 \ll P^{-\varpi/(2s_{1})}.\label{eq:L0_bound}
   \end{equation}
   Let us assume that \eqref{eq:L0_bound} holds. As in the proof of
   Lemma \ref{lem:minarcs_d}, we see that
   \[\vol(\mathcal{A}(L_0,I^{(2)}))
   \ll P^\varepsilon Q_1^n\prod_{j=1}^D(Q_jP^{-j})^{nt_j} \ll
   P^{2\varepsilon-n\sum_{j=1}^Djt_j}L_0^{-s_1-\sum_{j=1}^Ds_jt_j}.\]
   This implies that
   \[T(L_0;I^{(2)})\ll
   P^{n(s-\mathcal{D})+2\varepsilon}L_0^{1-s_1-\sum_{j=1}^Ds_jt_j}\ll
   P^{n(s-\mathcal{D})-\delta},\] provided that \eqref{eq:var_cond_0}
   holds and $\epsilon$ is small enough.
   \end{proof}

   The previous lemmata allow us to estimate the integral of
   $\abs{S(\va)}$ over $\va \in \m$. Lemma \ref{lem:minarcs_D} gives a
   sufficient bound for the integral over $\m \cap I_D^{(1)}$. For
   $\va\in I_d^{(1)}$, $d<D$, or $\va\in I^{(2)}$, we have $c_0
   P^{-e_0}\leq L \leq c_1$, with constants $c_0,c_1$ independent from
   $P$. We split this interval in dyadic parts and obtain
   \begin{equation*}
     \int_{\m\cap I_d^{(1)}}\abs{S(\va)}\dd\va \ll \sum_{j = 0}^{\lceil\log_2(c_0^{-1}c_1P^{e_0})\rceil}T(2^jc_0P^{-e_0},I_d^{(1)}) \ll P^{n(s-\mathcal{D})-\delta}(\log P)
   \end{equation*}
   by Lemma \ref{lem:minarcs_d}. An analogous argument using Lemma
   \ref{lem:minarcs_2} bounds the integral over $\m \cap I^{(2)}$.

\section{Major arcs: singular series}\label{sec:majorarcs}

We now choose the parameter $\varpi$ in the definition of the major arcs by
\begin{equation*}
\varpi := \frac{1}{4+(n+1)T}.
\end{equation*}
Furthermore recall that $\mathfrak{B}\subset V^s$ is a box aligned to
the basis and $B\subseteq[-1,1]^{ns}$ the corresponding box in
$\R^{ns}$.

We start by showing that the major arcs are disjoint in pairs provided $P$ is large enough.
\begin{lemma}\label{lem:major_arcs_disjoint}
  Let $\vg_1\neq \vg_2 \in (\R\cap\K)^T$ with $\N\aaa_{\vg_j}\leq P^{\varpi}$ for $j\in\{1,2\}$. For $P\gg 1$, we have $\M_{\vg_1}\cap\M_{\vg_2} = \emptyset$.
\end{lemma}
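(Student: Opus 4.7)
The plan is to argue by contradiction. Suppose $\va\in\M_{\vg_1}\cap\M_{\vg_2}$. Since $R$ is a fundamental domain for $\n$ in $V$ and $\vg_1\neq\vg_2$ are viewed as elements of $(R\cap K)^T$, some coordinate must differ, say $\gamma_{1,d,i}\neq\gamma_{2,d,i}$. Set $\gamma:=\gamma_{1,d,i}-\gamma_{2,d,i}\in K\smallsetminus\n$. The triangle inequality applied to the two major arc inequalities (which hold modulo $\n$) produces some $m\in\n$ with $|\gamma-m|\leq 2P^{-d+\varpi}\leq 2P^{-1+\varpi}$. Since $\aaa_{\gamma-m}=\aaa_\gamma$, I will simply replace $\gamma$ by $\gamma-m$ and henceforth assume $|\gamma|\leq 2P^{-1+\varpi}$ while still $\gamma\in K\smallsetminus\n$, in particular $\gamma\neq 0$.

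Next I would control the denominator ideal $\aaa_\gamma$. From $\aaa_{\vg_j}=\bigcap_{d',i'}\aaa_{\gamma_{j,d',i'}}$ one gets $\aaa_{\vg_j}\subseteq\aaa_{\gamma_{j,d,i}}$ and hence $\N\aaa_{\gamma_{j,d,i}}\leq\N\aaa_{\vg_j}\leq P^\varpi$ for $j\in\{1,2\}$. A direct verification shows $\aaa_{\gamma_{1,d,i}}\cap\aaa_{\gamma_{2,d,i}}\subseteq\aaa_\gamma$, since any $\beta$ with $\beta\gamma_{j,d,i}\in\n$ for both $j$ also satisfies $\beta\gamma\in\n$. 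Using the general inequality $\N(\aaa\cap\bbb)\leq\N\aaa\cdot\N\bbb$, we obtain
\begin{equation*}
\N\aaa_\gamma\leq P^{2\varpi}.
\end{equation*}

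Now I would invoke Minkowski's theorem for the ideal lattice $\aaa_\gamma\subseteq V$ to pick a nonzero $\beta\in\aaa_\gamma$ with $|\beta|\ll(\N\aaa_\gamma)^{1/n}\ll P^{2\varpi/n}$. Because $\gamma\neq 0$ and $\beta\neq 0$, the product $\beta\gamma$ is a nonzero element of $\n$. Using $|\beta\gamma|\ll|\beta|\cdot|\gamma|$ (the norm $|\cdot|$ on $V$ being submultiplicative up to a constant via $|\cdot|\asymp\max_v|\cdot|_v$), one gets
\begin{equation*}
|\beta\gamma|\ll P^{-1+\varpi(1+2/n)}.
\end{equation*}
The choice $\varpi=1/(4+(n+1)T)$ with $T\geq 1$ comfortably yields $\varpi(1+2/n)<1$, so the right-hand side tends to $0$ as $P\to\infty$. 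On the other hand, any nonzero $x\in\n$ satisfies $|N(x)|\geq\N\n$ while $|N(x)|\ll|x|^n$, so $|x|\gg_{K,\n} 1$. This contradicts the previous bound once $P$ is large enough.

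The main obstacle is the bookkeeping of denominator ideals: verifying cleanly that $\aaa_\gamma$ divides $\aaa_{\gamma_1}\cap\aaa_{\gamma_2}$ and that the latter has norm at most $P^{2\varpi}$. Everything else is a routine Minkowski/triangle-inequality argument, tailored so that $\varpi$ is small enough to win against the factor $P^{2\varpi/n}$ coming from Minkowski.
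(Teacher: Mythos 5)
Your proof is correct and follows essentially the same route as the paper: triangle inequality on a differing coordinate, a Minkowski argument giving a small nonzero $q$ (resp.\ $\beta$) in an intersection of denominator ideals of norm $\leq P^{2\varpi}$, and the observation that a nonzero element of $\n$ cannot have absolute value $\ll P^{-1+(1+2/n)\varpi}$ once $P$ is large (using $\varpi < 1/3 \leq n/(n+2)$). The only cosmetic difference is that the paper applies Minkowski once to $\aaa_{\vg_1}\cap\aaa_{\vg_2}$ and concludes simultaneously for all coordinates, while you single out one differing coordinate and bound $\N\aaa_\gamma$ via $\aaa_{\gamma_{1,d,i}}\cap\aaa_{\gamma_{2,d,i}}\subseteq\aaa_\gamma$; this is logically equivalent and equally clean.
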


\begin{proof}
  If $\va \in \M_{\vg_1}\cap\M_{\vg_2}$ then, writing $\vg_j =
  (\gamma_{j,d,i})_{d,i}$,
  \begin{equation*}    \abs{\gamma_{1,d,i}-\gamma_{2,d,i}}\leq\abs{\gamma_{1,d,i}-\alpha_{d,i}}+\abs{\alpha_{d,i}-\gamma_{2,d,i}}\ll P^{-d+\varpi} \leq P^{-1+\varpi}
  \end{equation*}
  holds for all $1\leq d\leq D, 1\leq i\leq t_d$. By Minkowski's
  convex body theorem, there is a nonzero $q\in\aaa_{\vg_1}\cap\aaa_{\vg_2}$ with $\abs{q}\ll P^{2\varpi/n}$. Hence, $q(\gamma_{1,d,i}-\gamma_{2,d,i})\in\n$ and $\abs{q(\gamma_{1,d,i}-\gamma_{2,d,i})}\ll P^{-1+(1+2/n)\varpi}$ for all $d,i$. Since $\varpi < 1/3$, this implies that $\vg_1=\vg_2$ whenever $P$ is large enough.
\end{proof}

For $\vg \in (\R\cap\K)^T$, we define
\begin{equation*}
\Sigma(\vg):=\sum_{\xx\in(\n/\aaa_\vg\n)^s}\Phi\left(\sum_{d=1}^D\sum_{i=1}^{t_d}\gamma_{d,i}G_{d,i}(\xx)\right),
\end{equation*}
and for $\vg \in V^T$, let
\begin{equation*}
J(\vg):=\int_{\B}\Phi\left(\sum_{d=1}^D\sum_{i=1}^{t_d}\gamma_{d,i}F_{d,i}(\xx)\right)\dd\xx.
\end{equation*}

\begin{lemma}\label{lem:major_arcs_S}
  For $\vg\in(\R\cap \K)^T$ with $\N\aaa_\vg \leq P^\varpi$, let
  $\va\in\M_\vg$ and write $\alpha_{d,i} = \gamma_{d,i} +
  \theta_{d,i}$ for all $1\leq d \leq D$ and $1\leq i \leq t_d$. Then
  \begin{equation*}
    S(\va) = \N\aaa_\vg^{-s}P^{ns}\Sigma(\vg)J((\theta_{d,i}P^d)_{d,i}) + O\left(\N\aaa_\vg\sum_{d=1}^D\sum_{i=1}^{t_d}\abs{\theta_{d,i}}P^{ns+d-1}+\N\aaa_\vg P^{ns-1}\right).
  \end{equation*}
\end{lemma}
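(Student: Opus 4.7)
The plan is to split $S(\va)$ by residue classes of $\xx$ modulo the lattice $(\aaa_\vg\n)^s\subseteq V^s$, then replace each polynomial $G_{d,i}$ by its leading form $F_{d,i}$ inside the resulting inner twist, and finally approximate those inner sums by Riemann integrals. For the residue decomposition, I would first observe that if $\xx,\xx'\in\n^s$ are congruent modulo $(\aaa_\vg\n)^s$, then $G_{d,i}(\xx)-G_{d,i}(\xx')\in\aaa_\vg\n$ (since $G_{d,i}\in\Ok[\xx_1,\ldots,\xx_s]$), and $\gamma_{d,i}\aaa_\vg\subseteq\n$ by the very definition of $\aaa_\vg$. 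Hence $\Phi(\sum_{d,i}\gamma_{d,i}G_{d,i}(\xx))$ depends only on the residue class of $\xx$, yielding
\[
S(\va)=\sum_{\vy\in(\n/\aaa_\vg\n)^s}\Phi\Bigl(\sum_{d,i}\gamma_{d,i}G_{d,i}(\vy)\Bigr)T(\vy),
\]
where $T(\vy)$ is the sum of $\Phi(\sum_{d,i}\theta_{d,i}G_{d,i}(\xx))$ over $\xx\in\n^s\cap P\B$ with $\xx\equiv\vy\pmod{\aaa_\vg\n}$.

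Next I would replace each $G_{d,i}$ by $F_{d,i}$ inside the inner twists. Since $G_{d,i}-F_{d,i}$ has degree at most $d-1$, one has $\abs{G_{d,i}(\xx)-F_{d,i}(\xx)}\ll P^{d-1}$ for $\xx\in P\B$, and the Lipschitz-type estimate $\abs{\Phi(u)-\Phi(v)}\ll\abs{\Tr(u-v)}\ll\abs{u-v}$ shows that this substitution introduces an error of size $\ll\sum_{d,i}\abs{\theta_{d,i}}P^{d-1}$ per lattice point. Summing over the $O(P^{ns})$ points of $\n^s\cap P\B$ produces a total error $\ll\sum_{d,i}\abs{\theta_{d,i}}P^{ns+d-1}$, which is absorbed by the first term of the claimed error (using that $\N\aaa_\vg\geq 1$).

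It then remains to approximate the resulting smoothed sum $T^\ast(\vy)$ by $\N\aaa_\vg^{-s}\int_{P\B}\Phi(\sum_{d,i}\theta_{d,i}F_{d,i}(\xx))\dd\xx$. The integrand has gradient $\ll\sum_{d,i}\abs{\theta_{d,i}}P^{d-1}$ on $P\B$, and the lattice $\aaa_\vg\n\subseteq V$ has covolume $\N\aaa_\vg$. Since $\aaa_\vg\n$ is a sublattice of $\n$, its first successive minimum is bounded below by an absolute constant, so by Minkowski's second theorem it admits a basis, and hence a fundamental parallelepiped, of diameter $\ell\ll\N\aaa_\vg$. Standard Riemann-sum estimation per residue class produces a smoothness error of size $\ll\N\aaa_\vg^{-s}\ell\cdot P^{ns}\sum_{d,i}\abs{\theta_{d,i}}P^{d-1}$ and a boundary error of size $\ll\N\aaa_\vg^{-s}\ell\cdot P^{ns-1}$; summing over the $\N\aaa_\vg^s$ residue classes and using $\ell\ll\N\aaa_\vg$ gives exactly the two terms in the stated error bound.

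Finally, the substitution $\xx\mapsto P\xx$ combined with $F_{d,i}(P\xx)=P^dF_{d,i}(\xx)$ gives $\int_{P\B}\Phi(\sum_{d,i}\theta_{d,i}F_{d,i}(\xx))\dd\xx=P^{ns}J((\theta_{d,i}P^d)_{d,i})$, so collecting the main terms recovers $\N\aaa_\vg^{-s}P^{ns}\Sigma(\vg)J((\theta_{d,i}P^d)_{d,i})$. The main obstacle is the Riemann-sum step: without the uniform bound $\ell\ll\N\aaa_\vg$ on the diameter of a fundamental parallelepiped of $\aaa_\vg\n$, one would pick up additional powers of $\N\aaa_\vg$ incompatible with the claimed error term, so the reduction-theoretic input that exploits $\aaa_\vg\n\subseteq\n$ to lower-bound the first minimum is essential.
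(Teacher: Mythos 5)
Your proof is correct, and it takes a genuinely different route from the paper's. The paper reduces everything to the rational case: it writes $S(\va)$ as a sum over $\ZZ^{ns}$, takes the \emph{rational integer} $N=\N\aaa_\vg\in\NN\cap\aaa_\vg$ as the modulus (so that $N\gamma_{d,i,j}\in\ZZ$ and $G^*_{d,i,j}\in\ZZ[\vX]$), invokes the classical major-arc evaluation over $\QQ$ using cubes of side $N$ as fundamental domains, and only at the end shows that the complete exponential sum modulo $N$ over $\ZZ^{ns}$ equals $N^{(n-1)s}\Sigma(\vg)$. You instead decompose intrinsically over the number field, splitting $\n^s$ by residues modulo the ideal lattice $(\aaa_\vg\n)^s$, of index $\N\aaa_\vg^s$ rather than $N^{ns}$, so that $\Sigma(\vg)$ appears directly by construction with no final bookkeeping step. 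The cost is that the fundamental domain of $\aaa_\vg\n$ is no longer a cube, and you must control its diameter: your use of $\aaa_\vg\n\subseteq\n$ to force $\lambda_1\gg 1$, combined with $\lambda_1\cdots\lambda_n\asymp\N\aaa_\vg$ and the existence of a (Minkowski-reduced) basis with $\abs{b_i}\ll\lambda_i$, yields the needed bound $\ell\ll\N\aaa_\vg$ on the diameter. (Strictly, Minkowski's second theorem produces linearly independent vectors attaining the $\lambda_i$, not a basis; one needs the standard supplementary fact that a basis with $\abs{b_i}\ll_n\lambda_i$ exists, but this is routine.) Both arguments yield the same error term; the paper's is lighter on geometry of numbers since the cube's diameter is trivially $\asymp N$, while yours is more natural from the adelic/ideal-theoretic viewpoint and skips the relabelling identity tying the $\ZZ^{ns}$-sum to $\Sigma(\vg)$.
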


\begin{proof}
  Whenever $d,i,j$ appear as indices of a sum, the sum runs over
  $1\leq d \leq D$, $1\leq i \leq t_d$, and $1\leq j\leq n$. As usual,
  we write $\gamma_{d,i}=\gamma_{d,i,1}\omega_1 + \cdots +
  \gamma_{d,i,n}\omega_n$, and similarly
  $\theta_{d,i}=\theta_{d,i,1}\omega_1 + \cdots +
  \theta_{d,i,n}\omega_n$. With these conventions, we have
   \begin{equation*}
     S(\va)=\sum_{\vX\in\ZZ^{sn}\cap PB}e\left(\sum_{d,i,j}(\theta_{d,i,j}+\gamma_{d,i,j})G_{d,i,j}^*(\vX)\right).
   \end{equation*}
 Let $N := \N\aaa_\vg \in(\NN\cap\aaa_\vg)$. Then $N\vg\in\n^T$, so
  in particular $N\gamma_{d,i,j}\in\ZZ$ for all $d,i,j$. Applying the standard argument over $\QQ$, we see that $S(\va)$ is the sum of
  \begin{equation*}
    \frac{1}{N^{ns}}\sum_{\vY\in([0,N-1]\cap \ZZ)^{sn}}e\left(\sum_{d,i,j}\gamma_{d,i,j}G_{d,i,j}^*(\vY)\right)\cdot J((\theta_{d,i}P^d)_{d,i})\cdot P^{ns}
  \end{equation*}
and an error term as in the lemma.
  It remains to show that
  \begin{equation*}
     \frac{1}{N^{ns}}\sum_{\vY\in([0,N-1]\cap \ZZ)^{sn}}e\left(\sum_{d,i,j}\gamma_{d,i,j}G_{d,i,j}^*(\vY)\right) = \frac{1}{N^s}\Sigma(\vg).
  \end{equation*}
 This follows from the following observations. Write $\vy = (y_1, \ldots, y_s)$, with, as usual, $y_{j}=y_{j,1}\omega_1+\cdots +y_{j,n}\omega_n$. If $\vY$ runs through $([0,N-1]\cap\ZZ)^{ns}$ then $\vy$ runs through a set of representatives of $(\n/(N\n))^s$. Moreover,
 \begin{equation*}
   e\left(\sum_{d,i,j}\gamma_{d,i,j}G_{d,i,j}^*(\vY)\right) = \Phi\left(\sum_{d,i}\gamma_{d,i}G_{d,i}(\vy)\right)
 \end{equation*}
depends only on $\vy$ modulo $\aaa_\vg\n$, and each coset of $(\n/(N\n))^s$ modulo $\aaa_\vg\n$ has $N^{(n-1)s}$ elements.
\end{proof}

For $H>0$, let
\begin{equation*}
\mathfrak{S}(H):=\sum_{\substack{\vg\in(\R\cap\K)^T\\\N\aaa_{\vg}\leq H}}\frac{\Sigma(\vg)}{\N\aaa_\vg^s}
\end{equation*}
and
\begin{equation*}
  \mathfrak{J}(H):=\int_{\substack{\vg\in V^T\\\abs{\vg}\leq H}}J(\vg)\dd\vg.
\end{equation*}

\begin{lemma}\label{lem:major_arcs_asympt}
  There is a positive constant $\delta$ such that, for large enough $P$,
  \begin{equation*}
    \int_{\M}S(\va)\dd\va = \mathfrak{S}(P^\varpi)\mathfrak{J}(P^\varpi)P^{n(s-\mathcal{D})} + O(P^{n(s-\mathcal{D})-\delta}).
  \end{equation*}
\end{lemma}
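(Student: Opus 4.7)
The plan is to reduce the integral to a sum of independent contributions from each major arc, and on each arc to insert the local approximation from Lemma~\ref{lem:major_arcs_S} and perform a change of variables.

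First I would use Lemma~\ref{lem:major_arcs_disjoint} to write
\[
  \int_\M S(\va)\dd\va = \sum_{\substack{\vg\in(\R\cap \K)^T\\ \N\aaa_\vg\leq P^\varpi}}\int_{\M_\vg}S(\va)\dd\va,
\]
the major arcs being pairwise disjoint for $P\gg 1$. On $\M_\vg$ I parametrise $\va$ by $\vt=(\theta_{d,i})_{d,i}$ via $\alpha_{d,i}=\gamma_{d,i}+\theta_{d,i}$, with each $\theta_{d,i}\in V$ satisfying $\abs{\theta_{d,i}}\leq P^{-d+\varpi}$. Applying Lemma~\ref{lem:major_arcs_S} gives
\[
  S(\va) = \N\aaa_\vg^{-s}P^{ns}\Sigma(\vg)J((\theta_{d,i}P^d)_{d,i}) + E(\va),
\]
with error $E(\va)\ll \N\aaa_\vg P^{ns-1+\varpi}$ (absorbing $\sum_{d,i}\abs{\theta_{d,i}}P^{d}$ by $\abs{\theta_{d,i}}\leq P^{-d+\varpi}$ and using $T=O(1)$).

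Next I would perform the change of variables $\eta_{d,i}:=\theta_{d,i}P^d$ in each factor $V$. Since each $\theta_{d,i}\in V\cong\RR^n$, the Jacobian factor is $P^{-nd}$, and the product over all $(d,i)$ contributes exactly $P^{-n\mathcal{D}}$. The new region of integration is $\{\abs{\eta_{d,i}}\leq P^\varpi\}$, which is the domain defining $\mathfrak{J}(P^\varpi)$. Hence the main-term contribution of $\M_\vg$ equals
\[
  \N\aaa_\vg^{-s}\,\Sigma(\vg)\,P^{n(s-\mathcal{D})}\,\mathfrak{J}(P^\varpi),
\]
and summing over $\vg$ yields $\mathfrak{S}(P^\varpi)\mathfrak{J}(P^\varpi)P^{n(s-\mathcal{D})}$, exactly the claimed main term.

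The main technical step, which I expect to be the only real obstacle, is bounding the total error. The volume of the region in $\vt$ is $\ll P^{n(-\mathcal{D}+T\varpi)}$, so each arc contributes an error $\ll \N\aaa_\vg\cdot P^{n(s-\mathcal{D})-1+(1+nT)\varpi}$. Summing over $\vg$ requires the estimate
\[
  \sum_{\substack{\vg\in(\R\cap \K)^T\\\N\aaa_\vg\leq H}}\N\aaa_\vg \;\ll\; \sum_{\aaa:\N\aaa\leq H}\N\aaa^{T+1}\;\ll_\epsilon\; H^{T+2+\epsilon},
\]
since for a fixed denominator ideal $\aaa_\vg=\aaa$ there are $\N\aaa^T$ admissible tuples $\vg\in(\R\cap\K)^T$, and the number of integral ideals of norm $\leq H$ is $\ll H$. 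Taking $H=P^\varpi$, the total error is
\[
  \ll P^{n(s-\mathcal{D})-1+(1+nT)\varpi+(T+2+\epsilon)\varpi} = P^{n(s-\mathcal{D})-1+(3+(n+1)T+\epsilon)\varpi}.
\]
By the definition $\varpi=\frac{1}{4+(n+1)T}$ this exponent equals $n(s-\mathcal{D})-\delta$ for some $\delta>0$ provided $\epsilon$ is taken small enough, which is precisely the error stated in the lemma. The choice of $\varpi$ is thus what makes the bookkeeping close cleanly, and verifying that the count of $\vg$'s really admits the $\N\aaa^{T+1}$ bound (using Lemma~\ref{lem:count_divisors} to separate generators from ideals) is the one piece requiring care.
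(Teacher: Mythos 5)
Your proposal is correct and follows essentially the same route as the paper: split the integral over the pairwise-disjoint arcs (Lemma~\ref{lem:major_arcs_disjoint}), insert the local expansion from Lemma~\ref{lem:major_arcs_S}, change variables $\eta_{d,i} = \theta_{d,i}P^d$ to produce the factor $P^{-n\mathcal{D}}\mathfrak{J}(P^\varpi)$, and then sum the per-arc errors. Your error bookkeeping arrives at the same exponent $n(s-\mathcal{D})-1+\varpi((n+1)T+3)$ as the paper, and the choice $\varpi = 1/(4+(n+1)T)$ closes the argument exactly as you say.

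One small remark: the bound ``for a fixed ideal $\aaa$ there are $\ll\N\aaa^T$ admissible tuples $\vg$'' does not come from Lemma~\ref{lem:count_divisors} (which counts divisors with bounded conjugates), but from the observation that each $\gamma_{d,i}$ with $\aaa\gamma_{d,i}\subseteq\n$ lies in $R\cap\n\aaa^{-1}$, a set of cardinality $\ll\N\aaa$; this is exactly Lemma~\ref{lem:major_arcs_counting}, which the paper happens to state just after the lemma you are proving. The paper itself simply asserts $m(P^\varpi)\ll P^{\varpi(T+1)}$ at this point, absorbing $\N\aaa_\vg\leq P^\varpi$ into the per-arc error bound rather than summing $\N\aaa_\vg$ over $\vg$ as you do; the two bookkeeping conventions are equivalent.
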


\begin{proof}
  By Lemma \ref{lem:major_arcs_disjoint}, we have
  \begin{equation*}
    \int_{\M}S(\va)\dd\va = \sum_{\substack{\vg\in(\R\cap\K)^T\\\N\aaa_\vg\leq P^{\varpi}}}\int_{\M_\vg}S(\va)\dd\va.
  \end{equation*}
  Using Lemma \ref{lem:major_arcs_S} and the obvious fact that
  $\vol(\M_\vg) = P^{-n\mathcal{D}+nT\varpi}$, it follows that
  \begin{align*}
    \int_{\M_\vg}S(\va)\dd\va &= \frac{1}{\N\aaa_\vg^s}P^{ns}\Sigma(\vg)\int_{\abs{\theta_{d,i}}\leq P^{-d+\varpi}}J((\theta_{d,i}P^d)_{d,i})\dd\vt\\ &+O(P^{n(s-\mathcal{D})-1+(nT+2)\varpi}).
  \end{align*}
  After a coordinate change in the integral over $\vt$ and summing over all $\vg$, we obtain
  \begin{equation*}
    \int_\M S(\va)\dd\va = \mathfrak{S}(P^\varpi)\mathfrak{J}(P^\varpi)P^{n(s-\mathcal{D})} + O\left(m(P^\varpi)\cdot P^{n(s-\mathcal{D})-1+(nT+2)\varpi}\right),
  \end{equation*}
  where
  \begin{equation*}
    m(P^\varpi):=|\{\vg\in (\R\cap \K)^T\mid \N\aaa_\vg \leq P^{\varpi}\}|\ll P^{\varpi (T+1)}.
  \end{equation*}
  Hence, we obtain an error term
  \begin{equation*}
    O(P^{n(s-\mathcal{D})-1+\varpi((n+1)T+3)}) = O(P^{n(s-\mathcal{D})-\delta}).\qedhere
  \end{equation*}
\end{proof}

Whenever the respective limit exists, we let
\begin{equation*}
\mathfrak{S}:=\lim_{H\to\infty}\mathfrak{S}(H) \quad\text{ and }\quad
\mathfrak{J}:=\lim_{H\to\infty}\mathfrak{J}(H).
\end{equation*}
The rest of this section is devoted to the absolute and fast convergence of the singular series $\mathfrak{S}$. The singular integral $\mathfrak{J}$ will be treated in the next section.

We start with an estimate for $\Sigma(\vg)$. Let
$\vg\in(\R\cap\K)^T$. By definition of $\aaa_\vg$, we can write
$\gamma_{d,i}\Ok=\frac{\n\aaa_{d,i}}{\aaa_\vg}$, where $\aaa_{d,i}$ is
an ideal of $\Ok$ and $\aaa_\vg +
\sum_{d=1}^D\sum_{i=1}^{t_d}\aaa_{d,i}=\Ok$.

\begin{lemma}\label{lem:major_arcs_sigma_estimate}
  Write $\gamma_{d,i}\Ok = \frac{\n\aaa_{d,i}}{\aaa_\vg}$ as
  above. Then, for $\epsilon >0$,
  \begin{equation*}
     \Sigma(\vg)\ll\N\aaa_\vg^{s+\epsilon}\min_{j\in\Delta}\left\{\frac{\N(\aaa_\vg+\sum_{d=j}^D\sum_{i=1}^{t_d}\aaa_{d,i})}{\N\aaa_\vg}\right\}^{1/s_j}.
  \end{equation*}
\end{lemma}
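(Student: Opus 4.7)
The strategy is to compare the complete character sum $\Sigma(\vg)$ with the exponential sum $S(\vg)$ of \eqref{eq:def_S_alpha} over a suitably sized box, and then invoke the iterative Weyl estimate of Lemma \ref{lem:iterativeargument}. Since the map $\xx \mapsto \Phi(\sum_{d,i}\gamma_{d,i}G_{d,i}(\xx))$ is periodic modulo $\aaa_\vg\n$, partitioning $P\mathcal{B}$ into translates of a fundamental domain of $\aaa_\vg\n$ in $V^s$ yields, provided $P^n \gg \N\aaa_\vg$,
\[
S(\vg) = \frac{P^{ns}}{\N\aaa_\vg^s}\Sigma(\vg) + O\!\left(P^{ns-1}\N\aaa_\vg^{s-1/n}\right).
\]
Thus, up to boundary errors, bounding $\Sigma(\vg)$ reduces to bounding $L := |S(\vg)|/P^{ns}$.

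Fix $j \in \Delta$. The plan is to choose a parameter $P = P_j$ so that, after applying Lemma \ref{lem:iterativeargument} with $\va := \vg$, one of two alternatives occurs. If $\vg \in I_d^{(1)}$ for some $d \geq j$, then \eqref{iterativeestimate} furnishes a power-saving bound on $L$ that translates directly (using that $s_d \geq s_j$ for $d \geq j$) into the desired estimate for $\Sigma(\vg)$. Otherwise, the lemma produces nonzero $\tilde q_k \in \n$ and $\tilde\vn_k \in \n^{t_k}$ for all $k \geq j$ with $k \in \Delta$, satisfying the bounds \eqref{eq:q_j_divisibility}, \eqref{eq:q_j_bound}, and \eqref{eq:q_j_approx} with $P = P_j$ and $Q_k$ as in \eqref{eq:Q_j_def}.

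The critical step is to pick $P_j$ small enough that the inequality $|\tilde q_k\gamma_{k,i}-\tilde\nu_{k,i}|\leq Q_k P_j^{-k}$ is sharper than any possible nonzero value of $\tilde q_k\gamma_{k,i} - \tilde\nu_{k,i}$, thereby forcing $\tilde q_k \gamma_{k,i} = \tilde\nu_{k,i} \in \n$ for every $k\geq j$ and every $1\leq i\leq t_k$. From the factorization $\gamma_{k,i}\Ok = \n\aaa_{k,i}\aaa_\vg^{-1}$, this equality translates into the ideal inclusion $\tilde q_k \aaa_{k,i} \subseteq \aaa_\vg$. Combined with $\tilde q_k \in \aaa_\vg \cap (\tilde q_k\Ok)$ and the defining relation $\bbb_j = \aaa_\vg + \sum_{k\geq j, i}\aaa_{k,i}$, one deduces $\tilde q_k\bbb_j \subseteq \aaa_\vg$, whence
\[
\frac{\N\bbb_j}{\N\aaa_\vg} \ll |N(\tilde q_k)| \ll Q_k^n.
\]
Inserting the definition \eqref{eq:Q_j_def} of $Q_j$ and solving for $L$ produces $L \ll \N\aaa_\vg^\epsilon (\N\bbb_j/\N\aaa_\vg)^{1/s_j}$, which combined with the first paragraph gives the stated bound, with the minimum over $j$ arising because $j$ was arbitrary.

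The main obstacle is the delicate balancing in the choice of $P_j$: it must be large enough that $S(\vg) \approx P_j^{ns}\Sigma(\vg)/\N\aaa_\vg^s$ with controllable boundary error, yet small enough that the archimedean approximation $|\tilde q_k\gamma_{k,i}-\tilde\nu_{k,i}| \leq Q_k P_j^{-k}$ in $V$ forces the ideal-theoretic equality $\tilde q_k\gamma_{k,i}\in\n$. Converting the vector inequality in $V$ to the exact equality requires a Minkowski-type argument in the spirit of Lemma \ref{lem:major_arcs_disjoint}: any nonzero element of $\n$ with all conjugates below a certain threshold must vanish. Tracking how the $Q_k$'s depend recursively on $L$ via \eqref{eq:Q_j_def}, while absorbing the resulting $(\log P)^{O(1)}$ losses into the $\N\aaa_\vg^\epsilon$ factor, constitutes the bulk of the technical work.
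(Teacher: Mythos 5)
Your overall strategy matches the paper's: express $\Sigma(\vg)$ in terms of $L = |S(\vg)|/P^{ns}$ via Lemma~\ref{lem:major_arcs_S} with $\va = \vg$ and $P = \N\aaa_\vg^A$, then apply Lemma~\ref{lem:iterativeargument} and split into the $I_d^{(1)}$ and $I^{(2)}$ cases. Your ideal-theoretic endgame is in fact cleaner than the paper's: from $\tilde q_k\gamma_{k,i} = \tilde\nu_{k,i}\in\n$ one gets $\tilde q_k\Ok\cdot\aaa_{k,i}\subseteq\aaa_\vg$ directly, and combining this with the divisibility $\tilde q_k \mid \tilde q_j$ for $k>j$ and the trivial $\tilde q_j\Ok\cdot\aaa_\vg\subseteq\aaa_\vg$ immediately gives $\tilde q_j\Ok\cdot\bbb_j\subseteq\aaa_\vg$, hence $\N\aaa_\vg/\N\bbb_j\leq|N(\tilde q_j)|\ll Q_j^n$ (note the norm quotient is inverted in your writeup). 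This short-circuits the paper's longer manipulation with the ideals $\ddd_j$, $\fff_j$ and the chain of divisibilities \eqref{eq:major_arcs_ideal_equality}--\eqref{eq:major_arcs_claim}.

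However, there is a genuine gap in your reasoning about the parameter $P$, and it concerns the ``delicate balancing'' you identify as the main obstacle. That obstacle does not exist, and the direction of your intuition is reversed. You claim $P_j$ must be ``small enough'' for the approximation $|\tilde q_k\gamma_{k,i}-\tilde\nu_{k,i}|\leq Q_kP_j^{-k}$ to be sharp enough to force equality; in fact, $Q_kP^{-k}\to 0$ as $P\to\infty$ (because $L\to\Sigma(\vg)/\N\aaa_\vg^s$ once the main term dominates, so $Q_k\ll(\log P)^{e(k)}$ stays polylogarithmic while $P^{-k}$ decays polynomially), so the approximation becomes \emph{sharper} with \emph{larger} $P$ — exactly as needed for the main-term approximation. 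More importantly, equality is not always forced: if $\Sigma(\vg)$ is small, $L$ may be small enough that $Q_k$ is large and the archimedean bound does not preclude a nonzero $\tilde q_k\gamma_{k,i}-\tilde\nu_{k,i}$. The paper handles this explicitly: if some $\tilde q_k\gamma_{k,i}\neq\tilde\nu_{k,i}$, then multiplying by a small Minkowski element of $\aaa_\vg$ produces a nonzero element of $\n$ bounded by $\N\aaa_\vg^{1/n}Q_kP^{-k}$, giving $1\ll\N\aaa_\vg^{1/n}Q_kP^{-k}$, hence a strong power-saving bound on $L$ and $\Sigma(\vg)\ll 1$. Your proposal only addresses the equality branch and never closes the other branch of the dichotomy; without it, the argument is incomplete.
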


\begin{proof}
  This generalizes \cite[Lemma 8.2]{BrowningHeathBrown2014}. The proof
  is essentially the same. We choose $\va = \vg$, $\vt=\mathbf{0}$ in
  Lemma \ref{lem:major_arcs_S}.  Clearly, $\va=\vg$ is in $\M_\vg$ for
  $P = \N\aaa_\vg^A$, with any large fixed value of $A$. Since
  $J(\mathbf{0})\gg 1$, we obtain
  \begin{equation*}
    \Sigma(\vg)\ll \frac{\N\aaa_\vg^s\cdot \abs{S(\vg)}}{P^{ns}}+ \frac{\N\aaa_\vg^{s+1}}{P}.
  \end{equation*}
  We choose $A > s+1$ to obtain
  \begin{equation}
    \label{eq:major_arcs_sigma_estimate}
    \Sigma(\vg) \ll 1 + \N\aaa_\vg^s L,
  \end{equation}
  with $L$ defined via $\abs{S(\va)}=P^{ns}L$ as earlier in the
  paper. Let us apply Lemma \ref{lem:iterativeargument} to estimate
  $L$. If $\vg\in I_d^{(1)}$ for some $d \in \Delta$ then
  \eqref{iterativeestimate} and \eqref{eq:major_arcs_sigma_estimate}
  show that
  \begin{equation*}
    \Sigma(\vg)\ll 1 + \frac{N\aaa_\vg^sP^\epsilon}{P^{n(s-B_d)/(2^{d-1}+(s-B_d)s_{d+1})}}\ll 1
  \end{equation*}
  if $A$ is chosen large enough. Now let us assume that $\va=\vg \in
  I^{(2)}$. In this case, Lemma \ref{lem:iterativeargument} yields
  $q_j \in \n$ and $\vn_j\in\n^{t_j}$ for $j \in \Delta$ satisfying
  \eqref{eq:q_j_divisibility}, \eqref{eq:q_j_bound}, and
  \eqref{eq:q_j_approx}, with $Q_j$ given in
  \eqref{eq:Q_j_def}. Assume first that $q_j\gamma_{j,i}\neq
  \nu_{j,i}$ for some $j,i$. By Minkowski's convex body theorem, there
  is a $q \in \aaa_\vg\smallsetminus\{0\}$ with $\abs{q}\ll\N\aaa_\gamma^{1/n}$. Then $q(q_j\gamma_{j,i}-\nu_{j,i})\in\n$, and so
  \begin{equation*}
    1\ll \abs{q(q_j\gamma_{j,i}-\nu_{j,i})}\ll \N\aaa_{\gamma}^{1/n}Q_jP^{-j}\ll \N\aaa_\vg^{1/n}L^{-s_j/n}P^{-j+\epsilon}.
  \end{equation*}
  This gives an upper bound for $L$, and substituting this bound in
  \eqref{eq:major_arcs_sigma_estimate} shows that $\Sigma(\vg)\ll 1$
  as long as we have chosen $A$ big enough. Hence, we are left with the case where
  \begin{equation}\label{eq:major_arcs_equality}
    q_j\gamma_{j,i} = \nu_{j,i} \quad\text{ for all }\quad j\in \Delta, 1\leq i \leq t_j.
  \end{equation}
  Since $\nu_{j,i}\in\n$, we find integral ideals $\bbb_{j,i}$ such
  that $\nu_{j,i}\Ok = \n\bbb_{j,i}$. After cancellation,
  \eqref{eq:major_arcs_equality} gives
  \begin{equation*}
    q_j \aaa_{j,i} = \aaa_\vg \bbb_{j,i} \quad\text{ for all }\quad j\in \Delta, 1\leq i \leq t_j.
  \end{equation*}
  In the following arguments, we write $\aaa^{(j)} := \aaa_{j,1} + \cdots + \aaa_{j,t_j}$ and $\bbb^{(j)} := \bbb_{j,1} + \cdots + \bbb_{j,t_j}$. Then $q_j\aaa^{(j)} = \aaa_\gamma \bbb^{(j)}$ holds for all $j \in \Delta$. With
  \begin{equation*}
    \ddd_j := \aaa_\vg+\aaa^{(j)} \text{ and }\fff_j := q_j\Ok + \bbb^{(j)},
  \end{equation*}
  we have thus
\begin{equation*}
  \frac{q_j\Ok}{\fff_j}\cdot\frac{\aaa^{(j)}}{\ddd_j} = \frac{\aaa_\vg}{\ddd_j}\cdot \frac{\bbb^{(j)}}{\fff_j}.
\end{equation*}
 We claim that
 \begin{equation}\label{eq:major_arcs_claim}
   \frac{\aaa_\vg}{\sum_{d>j}\ddd_d} \text{ divides }q_j\Ok\text{ for all }j \in \Delta.
 \end{equation}
 Indeed, since $\frac{q_j\Ok}{\fff_j}+\frac{\bbb^{(j)}}{\fff_j}=\Ok$,
 we see that $\frac{q_j\Ok}{\fff_j} \mid \frac{\aaa_\vg}{\ddd_j}$. The
 opposite divisibility follows analogously, and hence
 \begin{equation}
 \frac{q_j\Ok}{\fff_j} = \frac{\aaa_\vg}{\ddd_j}.\label{eq:major_arcs_ideal_equality} 
\end{equation}
Let $k \in \Delta$, $k>j$. Since \eqref{eq:major_arcs_ideal_equality} holds for $k$ as well as for $j$, we see that $q_j\ddd_j\fff_k = q_k\ddd_k\fff_j$. By \eqref{eq:q_j_divisibility}, we can write $q_j = q_k \hat{q_j}$ with $\hat{q_j}\in \Ok$. Substituting this in the above equality, cancelling $q_k\Ok$, and dividing both sides by $\ddd_j + \fff_j$ shows that
\begin{equation*}
  \hat{q_j}\fff_k\frac{\ddd_j}{\ddd_j+\fff_j} = \ddd_k\frac{\fff_j}{\ddd_j+\fff_j},
\end{equation*}
and in particular
\begin{equation}\label{eq:major_arcs_divisibility}
  \frac{\ddd_j}{\ddd_j+\fff_j} \text{ divides }\ddd_k \text{ for all }k>j.
\end{equation}
Let $\delta_j := \sum_{d>j}\ddd_d$. By \eqref{eq:major_arcs_ideal_equality},
\begin{equation*}
  q_j\Ok = \frac{\aaa_\vg}{\delta_j}\cdot\frac{\fff_j\delta_j}{\ddd_j} = \frac{\aaa_\vg}{\delta_j}\cdot \frac{\fff_j}{\ddd_j+\fff_j}\cdot \frac{\delta_j}{\ddd_j(\ddd_j+\fff_j)^{-1}}.
\end{equation*}
By \eqref{eq:major_arcs_divisibility}, the second and the third factor on the right-hand side are integral ideals, and hence \eqref{eq:major_arcs_claim} holds as claimed. Hence,
\begin{equation*}
  \frac{\N\aaa_\vg}{\N(\aaa_\vg+\sum_{d=j}^D\sum_{i=1}^{t_d}\aaa_{d,i})} = \frac{\N\aaa_\vg}{\N\delta_j} \leq \N(q_j\Ok) \ll \abs{q_j}^n \ll Q_j^n\ll L^{-s_j}(\log P)^{n e(j)}.
\end{equation*}
This gives an upper bound for $L$ which, once substituted into \eqref{eq:major_arcs_sigma_estimate}, proves the lemma.
\end{proof}

\begin{lemma}\label{lem:major_arcs_counting}
  Let $\aaa$ be a fractional ideal of $K$. Then
  \begin{equation*}
    |\R\cap\aaa|\ll \frac{1}{\N\aaa}+1.
  \end{equation*}
\end{lemma}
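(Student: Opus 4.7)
My plan is to reduce the bound to a standard lattice-point count for a lattice with comparable successive minima. The central observation is that every fractional ideal $\aaa$ of $K$, regarded as a lattice in $V$ via $\omega_1,\ldots,\omega_n$, is essentially isotropic: all its successive minima with respect to the sup-norm $\abs{\cdot}$ are of order $\N\aaa^{1/n}$, with implicit constants depending only on $K$, $\n$, and the fixed basis.

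For the lower bound on nonzero lattice vectors, every $\alpha\in\aaa\setminus\{0\}$ satisfies $\alpha\Ok\subseteq\aaa$, so $\abs{N(\alpha)}=\N(\alpha\Ok)\geq\N\aaa$. Combined with the elementary inequality $\abs{N(\alpha)}=\prod_{v\in\archplaces}\abs{\alpha}_v^{n_v}\ll\abs{\alpha}^n$, which follows from $\abs{\alpha}\asymp\max_v\abs{\alpha}_v$, this gives $\abs{\alpha}\gg\N\aaa^{1/n}$. In particular, once $\N\aaa$ exceeds a constant depending only on $K$, $\n$, and the basis, the diameter of $\R$ forces $\R\cap\aaa\subseteq\{0\}$ and the lemma holds trivially.

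For the matching upper bound I would combine Minkowski's first theorem with Dirichlet's unit theorem. Applying Minkowski to the lattice $\aaa\subset V$ produces $\alpha_0\in\aaa\setminus\{0\}$ with $\abs{N(\alpha_0)}\ll\N\aaa$, and hence $\abs{N(\alpha_0)}\asymp\N\aaa$ by the previous paragraph. Because the logarithmic embedding of $\units$ is a full lattice in the trace-zero hyperplane of $\RR^{\card{\archplaces}}$ with compact fundamental domain, there exists a unit $u\in\units$ such that $\abs{u\alpha_0}_v\asymp\N\aaa^{1/n}$ for every $v\in\archplaces$. Setting $\alpha:=u\alpha_0\in\aaa$, the $n$ vectors $\alpha\omega_1,\ldots,\alpha\omega_n$ are linearly independent elements of $\aaa$ with $\abs{\alpha\omega_i}\ll\N\aaa^{1/n}$ for each $i$. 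Combined with the first step, this shows that the successive minima $\lambda_1\leq\cdots\leq\lambda_n$ of $\aaa$ in the sup-norm are all of exact order $\N\aaa^{1/n}$.

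To conclude, I would apply a standard Davenport-type counting estimate: for a lattice in $\RR^n$ with successive minima $\lambda_i$ in the sup-norm, the number of lattice points inside the unit box $[0,1)^n$ is $\ll\prod_{i=1}^n(1+\lambda_i^{-1})$. Since $\R$ is contained in such a box, substituting $\lambda_i\asymp\N\aaa^{1/n}$ yields $\abs{\R\cap\aaa}\ll(1+\N\aaa^{-1/n})^n\ll 1+\N\aaa^{-1}$, which is the claimed bound. The main technical obstacle is the unit-theoretic equalization that produces the balanced element $\alpha$ in the second step; once that is available, the remainder of the argument is routine lattice geometry.
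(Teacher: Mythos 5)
Your proof is correct, and it reconstructs the content that the paper delegates to an external lemma. The paper's own proof is extremely terse: it observes that $\R\cap\aaa$ is contained in a set of elements of $\aaa$ with all conjugates bounded by a constant, and then cites \cite[Lemma 7.1]{freipieropan2014} for the count. You instead prove the count from scratch. The ingredients you use---the lower bound $\abs{\alpha}\gg\N\aaa^{1/n}$ from $\abs{N(\alpha)}\geq\N\aaa$, the construction of a norm-balanced element $\alpha\in\aaa$ via Minkowski plus the unit theorem (a technique the authors also use later, e.g.\ in Lemma \ref{lem:singint_J_estimate} to produce the balanced generator $t$), the resulting estimate $\lambda_i\asymp\N\aaa^{1/n}$ on all successive minima, and the Davenport-type bound $\ll\prod_i(1+\lambda_i^{-1})$---are exactly what one would expect the cited lemma to contain, and your final elementary inequality $(1+\N\aaa^{-1/n})^n\ll 1+\N\aaa^{-1}$ closes the argument cleanly. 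I checked the two small points one needs to verify: $\alpha\omega_i\in\aaa$ holds because $\omega_i\in\n\subseteq\Ok$, and multiplication by $\alpha\neq0$ is an $\RR$-linear automorphism of $V$, so the $\alpha\omega_i$ are indeed independent. Your proof is a self-contained version of what the paper outsources; the trade-off is that yours is longer but does not depend on a reference, while the paper's is shorter at the cost of opacity.
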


\begin{proof}
  There is a constant $c$ depending only on $K$ and our basis
  $\omega_1, \ldots, \omega_n$ such that $\R\cap\aaa \subseteq \{x \in
  \aaa \mid \abs{x^{(j)}}\leq c \text{ for all } v\}$. Here, the
  $x^{(v)}$ are all the (real and complex) conjugates of $x$. The
  result then follows from \cite[Lemma 7.1]{freipieropan2014}. 
\end{proof}

We are now ready to treat our singular series under the hypotheses of
Theorem \ref{thm:main}.

\begin{lemma}\label{lem:singularseries}
  Assume that
  \begin{equation}
    \label{eq:degree_condition}
    s_1 + \sum_{j=1}^Ds_jt_j<1.
  \end{equation}
  Then the series defining $\mathfrak{S}$ converges absolutely and
  there is a positive constant $\delta$ such that
  \begin{equation*}
    \mathfrak{S}-\mathfrak{S}(H) \ll H^{-\delta}    
  \end{equation*}
  holds for large enough $H$.
\end{lemma}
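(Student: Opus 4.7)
The plan is to establish the polynomial decay $|\mathfrak{S}-\mathfrak{S}(H)|\ll H^{-\delta}$ for some $\delta>0$; absolute convergence of $\mathfrak{S}$ and the claimed rate follow directly. The main ingredient is Lemma~\ref{lem:major_arcs_sigma_estimate}, which gives $|\Sigma(\vg)|/\N\aaa_\vg^s\ll\N\aaa_\vg^\varepsilon\min_{j\in\Delta}(\N\ddd_j(\vg)/\N\aaa_\vg)^{1/s_j}$, where $\ddd_j(\vg) = \aaa_\vg + \sum_{d\geq j, i}\aaa_{d,i}$. For each $\vg$ in the tail, I would choose an index $j=j(\vg)\in\Delta$ attaining this minimum. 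Split the tail sum dyadically by $\N\aaa_\vg\in(H',2H']$, and inside each window partition by $j=j(\vg)$, by the denominator ideal $\aaa=\aaa_\vg$, and by the ideal $\bbb=\ddd_j(\vg)\mid\aaa$.

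The crucial counting step: for fixed $\aaa$, $j$, and $\bbb\mid\aaa$, Lemma~\ref{lem:major_arcs_counting} bounds the number of $\vg$ with $\aaa_\vg=\aaa$ and $\ddd_j(\vg)=\bbb$ by $\ll\N\aaa^{t_1+\cdots+t_{j-1}}(\N\aaa/\N\bbb)^{t_j+\cdots+t_D}$, since $\ddd_j=\bbb$ forces $\gamma_{d,i}\in\R\cap\n\bbb\aaa^{-1}$ for $d\geq j$ (giving $\ll\N\aaa/\N\bbb$ representatives each), while for $d<j$ the $\gamma_{d,i}\in\R\cap\n\aaa^{-1}$ range freely over $\ll\N\aaa$ representatives. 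Multiplying by the bound $(\N\bbb/\N\aaa)^{1/s_j}$, summing over $\bbb\mid\aaa$ via a standard divisor estimate, and then over $\aaa$ with $\N\aaa>H$ produces the desired bound $H^{-\delta}$, in complete analogy with the volume computation $\vol(\mathcal{A}(L_0;I^{(2)}))\ll P^\varepsilon Q_1^n\prod(Q_jP^{-j})^{nt_j}$ used in the proof of Lemma~\ref{lem:minarcs_2}.

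The hypothesis~\eqref{eq:degree_condition} supplies the necessary margin for every choice of $j(\vg)$. Since $(s_j)$ is nonincreasing in $j$, one has the elementary inequality
\[s_j(1+t_1+\cdots+t_j) \leq s_1+\sum_{\ell}s_\ell t_\ell < 1\]
for every $j\in\Delta$, which is precisely the power-saving that the counting and divisor sums extract. Dyadic summation over $H'\in\{H,2H,4H,\ldots\}$ then gives $|\mathfrak{S}-\mathfrak{S}(H)|\ll H^{-\delta}$ and completes the argument.

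The hard part will be the case analysis on the sign of $\tau_j := 1/s_j-(t_j+\cdots+t_D)$: in one regime the divisor sum over $\bbb\mid\aaa$ contributes $\ll\N\aaa^{\tau_j+\varepsilon}$ (dominated by $\bbb=\aaa$), while in the other it contributes only $\N\aaa^\varepsilon$ (dominated by $\bbb=\Ok$). Matching these two regimes with the choice of $j(\vg)$ so as to extract the hypothesis-governed exponent $s_1+\sum_\ell s_\ell t_\ell<1$ from the combined $\aaa$- and $\bbb$-summations is the only subtle bookkeeping in the proof, and the parallel to the proof of Lemma~\ref{lem:minarcs_2} (where the very same hypothesis appears) is what confirms that the exponents line up as required.
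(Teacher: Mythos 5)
Your proposal has a genuine gap in the counting step. You propose to partition the sum over $\vg$ by the single index $j=j(\vg)$ attaining the minimum in Lemma~\ref{lem:major_arcs_sigma_estimate} and by the one ideal $\bbb=\ddd_j(\vg)$, and then to count the number of $\vg$ with $\aaa_\vg=\aaa$ and $\ddd_j=\bbb$ by $\ll\N\aaa^{t_1+\cdots+t_{j-1}}(\N\aaa/\N\bbb)^{t_j+\cdots+t_D}$. The problem is that this count never uses the constraint that $j$ is actually the minimizer, so it throws away precisely the correlation on which the whole estimate rests. Concretely, take $j=j_0:=\min\Delta$: by coprimality one always has $\ddd_{j_0}=\Ok$, so only $\bbb=\Ok$ appears, the count is $\ll\N\aaa^{T}$, and the estimate contributes $(\N\bbb/\N\aaa)^{1/s_{j_0}}=\N\aaa^{-1/s_1}$. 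Thus the $j_0$-contribution to $A(\aaa)/\N\aaa^{s}$ is $\ll\N\aaa^{T-1/s_1+\epsilon}$, and for the tail sum over $\aaa$ to converge you would need $s_1(T+1)<1$. This is strictly stronger than the hypothesis \eqref{eq:degree_condition}; for example with $D=3$, $t_1=0$, $t_2=1$, $t_3=3$ and $c_2:=2/(s-B_2)=c_3:=24/(s-B_3)=1/9$, the hypothesis reads $2c_2+5c_3=7/9<1$ while $s_1(T+1)=5(c_2+c_3)=10/9>1$. Your elementary inequality $s_j(1+t_1+\cdots+t_j)\leq s_1+\sum_\ell s_\ell t_\ell$ is correct but is simply the wrong inequality for this bookkeeping; for $j=j_0$ it only gives $s_{j_0}(1+t_{j_0})<1$, not $s_1(T+1)<1$.

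The paper avoids this by never picking a single $j$. Instead it dominates the minimum by a weighted geometric mean, $\min_j(\N\ddd_j/\N\aaa)^{1/s_j}\leq\prod_{j\in\Delta}(\N\ddd_j/\N\aaa)^{\lambda_j/s_j}$ with $\sum\lambda_j=1$, and then sums over the full tuple $(\ddd_j)_{j\in\Delta}$, using Lemma~\ref{lem:major_arcs_counting} for each factor to bound the count of $\vg$ by $\prod_j(\N\aaa/\N\ddd_j)^{t_j}$. The weights $\lambda_j=t_js_j$ for $j\neq j_0$ (with the slack $\theta=1-\sum t_js_j$ deposited on $j_0$, where $\ddd_{j_0}=\Ok$) are chosen precisely so that the two products cancel, leaving $A(\aaa)\ll\N\aaa^{s-\theta/s_{j_0}+\epsilon}$, and $\theta>s_{j_0}$ under \eqref{eq:degree_condition}. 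The analogy to Lemma~\ref{lem:minarcs_2} that you invoke actually points in the same direction: there too one retains the full collection of $Q_j$'s and approximants $\vn_j$ for all $j$ simultaneously rather than the best single $j$. Your argument would need to be reworked to retain all the $\ddd_j$'s in the partition and exploit the compensating product structure.
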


\begin{proof}
  We write
  \begin{equation*}
    A(\aaa) := \sum_{\substack{\vg\in(\R\cap\K)^T\\\aaa_\vg=\aaa}}|\Sigma(\vg)|.
  \end{equation*}
  For each $\vg\in(\R\cap\K)^T$ with $\aaa_\vg=\aaa$, we write again
  $\gamma_{d,j}\Ok = \frac{\n\aaa_{d,j}}{\aaa}$ and define
  \begin{equation}
    \ddd_j =
    \aaa + \sum_{d=j}^D\sum_{i=1}^{t_d}\aaa_{d,i}.\label{eq:major_arcs_dj}
\end{equation}
Then for $j_0:=\min\Delta$ we have $\ddd_{j_0}=\Ok$. By Lemma
\ref{lem:major_arcs_sigma_estimate}, we see that
  \begin{equation*}
    \Sigma(\vg)\ll\N\aaa^{s+\epsilon/2}\min_{j\in\Delta}\left\{\frac{\N\ddd_j}{\N\aaa}\right\}^{1/s_j} \leq \N\aaa^{s+\epsilon/2} \prod_{j\in\Delta}\left(\frac{\N\ddd_j}{\N\aaa}\right)^{\lambda_j/s_j}, 
  \end{equation*}
  for any $\lambda_j\geq 0$ with $\sum_{j\in\Delta}\lambda_j=1$. As in
  \cite[Section 8]{BrowningHeathBrown2014}, we choose
  \begin{equation}\label{dl:eq8.8}
    \lambda_j :=
    \begin{cases}
      \theta + t_{j_0}s_{j_0} &\text{ if }j=j_0\\
      t_js_j &\text{ if }j\in\Delta\smallsetminus\{j_0\},
    \end{cases}
  \end{equation}
 where $\theta = 1-\sum_{j\in\Delta}t_js_j \in (s_1,1)$. Hence,
  \begin{equation*}
    A(\aaa)\ll\N\aaa^{s+\epsilon/2}\sum_{\substack{\ddd_j \mid \aaa\\j\in \Delta}}m((\ddd_j)_{j\in \Delta}, \aaa)\left(\frac{1}{\N\aaa}\right)^{\theta/s_{j_0}}\prod_{j\in\Delta}\left(\frac{\N\ddd_j}{\N\aaa}\right)^{t_j},
  \end{equation*}
  where $m((\ddd_j)_{j\in\Delta},\aaa)$ is the number of all
  $\vg\in(\R\cap\K)^T$ with $\aaa_\vg=\aaa$ and
  \eqref{eq:major_arcs_dj}. Clearly, any $\vg$ with $\aaa_\vg=\aaa$
  and \eqref{eq:major_arcs_dj} satisfies
  \begin{equation*}
    \gamma_{j,i}\in \frac{\n\ddd_j}{\aaa} \text{ for all } j\in\Delta, 1\leq i \leq t_j.
  \end{equation*}
   Using this and Lemma \ref{lem:major_arcs_counting},
  \begin{align*}
    m((\ddd_j)_{j\in\Delta},\aaa)\leq \prod_{j\in\Delta}\abs{R \cap
      \frac{\n\ddd_j}{\aaa}}^{r_j}\ll
    \prod_{j\in\Delta}\left(\frac{\N\aaa}{\N\ddd_j}\right)^{t_j}.
  \end{align*}
  Hence,
  \begin{equation*}
    A(\aaa)\ll \N\aaa^{s+\epsilon/2-\theta/s_{j_0}}\sum_{\substack{\ddd_j\mid\aaa\\j\in\Delta}}1 \ll \N\aaa^{s-\theta/s_{j_0}+\epsilon}.
  \end{equation*}
  Since $\theta > s_1 = s_{j_0}$, this shows that $\mathfrak{S}$
  converges absolutely and that $\mathfrak{S}-\mathfrak{S}(H)\ll
  H^\delta$ for some appropriate delta.
\end{proof}

\section{Major arcs: singular integral}\label{sec:singint}

Throughout this section, we will assume
\eqref{eq:degree_condition}. For $\vg = (\gamma_{d,i})_{d,i} \in V^T$,
we write $\vg_d := (\gamma_{d,1}, \ldots, \gamma_{d,t_d})$.

\begin{lemma}\label{lem:singint_J_estimate}
  Let $a \in [0, n]$ and $\epsilon>0$. For any $\vg \in
  V^T$, we have
  \begin{equation}\label{eq:J_trivial_estimate}
    J(\vg)\ll 1.
  \end{equation}
  Assume that $\abs{\vg} \geq 1$ and let $d \in \Delta$ with $\vg_d \neq
  0$. Then there exists a unit $u_d \in \units$ such that
  \begin{enumerate}
  \item $\abs{u_d} \ll \abs{\vg}^{a/n}$,
  \item $J(\vg) \ll \abs{\vg}^\epsilon \abs{u_d\vg_d}^{-a/s_d}$.
  \end{enumerate}
  Moreover, we have
  \begin{equation}\label{eq:J_estimate_without_u}
    J(\vg) \ll \abs{\vg}^\epsilon\abs{\vg_d}^{-1/s_d}.
  \end{equation}
\end{lemma}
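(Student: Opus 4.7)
The trivial bound $J(\vg)\ll 1$ is immediate from $|\Phi|=1$ and $\vol(\B)\ll 1$.

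For the remaining estimates, the strategy is to approximate $J(\vg)$ by an exponential sum and apply the iterative Weyl-type estimate in Lemma \ref{lem:iterativeargument}. Let $P\geq 1$ be a real parameter, to be chosen in terms of $\vg$ and a unit $u_d\in\units$ yet to be produced. The substitution $\vy=P\xx$ together with the homogeneity of the $F_{d,i}$ and a standard Riemann-sum approximation over the lattice $\tfrac1P\n^s$, whose error is controlled by the Lipschitz bound $\ll\abs{\vg}$ for $\sum_{d,i}\gamma_{d,i}F_{d,i}$ on $\B$, yields
\[J(\vg) = P^{-ns}\widetilde{S}(\vbeta) + O(\abs{\vg}/P),\qquad \beta_{d,i}:=\gamma_{d,i}P^{-d},\]
where $\widetilde{S}$ is the exponential sum attached to the pure leading forms $F_{d,i}$. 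Writing $|\widetilde{S}(\vbeta)|=P^{ns}L$, we obtain $\abs{J(\vg)}\leq L + O(\abs{\vg}/P)$.

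The unit enters via the multiplicative symmetry of $V$: for any $u\in\units$, the substitution $\xx\mapsto u^{-1}\xx$ has Jacobian $|N(u)|=1$, and the $d$-homogeneity of $F_{d,i}$ turns $J(\vg)$ into the same integral with $\vg_d$ replaced by $u^{-d}\vg_d$ and $\B$ replaced by $u\B$. The Riemann-sum reduction depends only on diameter, volume and $C^1$-regularity of the integrand, so it applies equally well to the transformed integral, producing a sum $\widetilde{S}(\vbeta')$ with $\beta'_{d,i} = u^{-d}\gamma_{d,i}P^{-d}$. By Dirichlet's unit theorem I pick $u=u_d^{-1}$ with $\abs{u_d}\ll\abs{\vg}^{a/n}$ so that the conjugates of $u_d\vg_d$ are balanced as much as the size restriction allows; this is the unit claimed in the statement.

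Applying Lemma \ref{lem:iterativeargument} to $\widetilde{S}(\vbeta')$ then gives two alternatives. Either $\vbeta'\in I_d^{(1)}$, and the Weyl bound on $L$ combined with the Riemann-sum error $\abs{\vg}/P$ yields, after optimising $P$ to balance the two terms, the bound $J(\vg)\ll\abs{\vg}^\epsilon\abs{u_d\vg_d}^{-a/s_d}$ (the denominator exponent $s_d$ emerges from the identity $s_d - s_{d+1} = 2^{d-1}(d-1)t_d/(s-B_d)$ together with the precise Weyl exponent $n(s-B_d)/(2^{d-1}+(s-B_d)s_{d+1})$); or $\vbeta'\in I^{(2)}$, in which case the Diophantine approximations $q_d\beta'_{d,i}\approx\nu_{d,i}$ are, at the chosen scale of $P$, incompatible with $\vg_d\neq 0$ (any nonzero $q_d$-multiple of $u_d^{-d}\gamma_{d,i}$ that is close to a lattice point must actually vanish), ruling the alternative out. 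The bound \eqref{eq:J_estimate_without_u} now follows from (2) by specialising $a=1$ and $u_d=1$, the constraint $\abs{u_d}\leq 1\ll\abs{\vg}^{1/n}$ being trivial. The main obstacle will be the simultaneous choice of $P$ and $u_d$ that exactly balances the Weyl-type saving on $L$ against the Riemann-sum error, together with verifying that the change of variable by the unit $u$ (which makes the integration region no longer axis-aligned) leaves the Riemann-sum reduction and Lemma \ref{lem:iterativeargument} applicable; this is bookkeeping rather than a conceptual difficulty.
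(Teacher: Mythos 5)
Your overall strategy — approximate $J(\vg)$ by an exponential sum at scale $P\asymp\abs{\vg}^A$ and feed it into the iterative Weyl lemma — is the same as the paper's (the paper reuses Lemma \ref{lem:major_arcs_S} with $\va=(P^{-j}\gamma_{j,i})_{j,i}$ and $\vg=\mathbf 0$ rather than a fresh Riemann-sum reduction to an $F$-only sum $\widetilde S$, which avoids re-proving Lemma \ref{lem:iterativeargument} for a new sum, but the shape is the same). However, your handling of the alternative $\vbeta'\in I^{(2)}$ and of the unit is wrong, and this is precisely where the content of the lemma lies.

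You claim the $I^{(2)}$ alternative is ``ruled out'' because a nonzero multiple of $\gamma_{d,i}$ close to a lattice point must vanish. That is not so. Lemma \ref{lem:iterativeargument} in the $I^{(2)}$ case merely produces $q_d\in\n$ and $\vn_d\in\n^{t_d}$ with $\abs{q_d\alpha_{d,i}-\nu_{d,i}}\leq Q_dP^{-d}$, and it is entirely possible that $\vn_d=\mathbf 0$: since $\alpha_{d,i}=P^{-d}\gamma_{d,i}$ is tiny, $q_d\alpha_{d,i}$ need not be near any nonzero lattice point. Far from being a degenerate case to discard, $\vn_d=\mathbf 0$ is the case that forces the unit into the statement. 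When $\vn_d=\mathbf 0$ the Diophantine inequality reads $\abs{q_d\alpha_{d,i}}\leq Q_dP^{-d}$, and because $\abs{q_d}$ can be large even while $N(q_d)$ is small, one cannot simply divide by $q_d$ without losing a factor $\abs{q_d^{-1}}\ll Q_d^{n-1}$. The paper's fix is to pick a balanced generator $t$ of $q_d\Ok$ (so $\absv{t}\asymp N(q_d)^{1/n}$ at each archimedean place) and set $u_d:=q_d/t\in\units$; then $\abs{u_d\alpha_{d,i}}\asymp N(q_d)^{-1}\abs{q_d\alpha_{d,i}}\leq Q_dP^{-d}$ with no $n$-fold loss, giving $L\ll\abs{\vg}^\epsilon\abs{u_d\vg_d}^{-n/s_d}$. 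The point is that $u_d$ is constructed \emph{a posteriori} from the $q_d$ delivered by Lemma \ref{lem:iterativeargument}, not chosen a priori by Dirichlet to ``balance the conjugates of $\vg_d$''; your proposed choice has no reason to be compatible with the Diophantine data coming out of the Weyl iteration.

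Relatedly, \eqref{eq:J_estimate_without_u} does not follow from part (2) ``by specialising $a=1$ and $u_d=1$'': the lemma only asserts that \emph{some} unit $u_d$ (of size $\ll\abs{\vg}^{a/n}$) works, and in the $\vn_d=\mathbf 0$ subcase the working unit is forced to be $q_d/t$, not $1$. The paper proves \eqref{eq:J_estimate_without_u} separately, by taking the cruder bound $\abs{\alpha_d}\ll\abs{q_d^{-1}}\abs{q_d\alpha_d}\ll Q_d^{n-1}\cdot Q_dP^{-d}$ in place of the unit trick; the extra $Q_d^{n-1}$ is exactly what turns the exponent $n/s_d$ into $1/s_d$. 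So a correct proof has to treat (2) and \eqref{eq:J_estimate_without_u} as genuinely distinct estimates arising from two ways of dividing out the modulus $q_d$.

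Finally, a minor point: if you do run a Riemann-sum reduction to an $F$-only sum $\widetilde S$, you must still justify that Lemma \ref{lem:iterativeargument} applies to $\widetilde S$; it is stated for $S(\va)$ built from the full polynomials $G_{d,i}$. This is harmless (the Weyl differencing only sees leading forms), but the paper avoids even this loose end by working with $S(\va)$ itself via Lemma \ref{lem:major_arcs_S}. Also, the non-axis-aligned region $u\B$ you introduce is not actually needed in the paper's argument.
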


\begin{proof}
  It is clear that $J(\vg)\ll 1$ holds for all $\vg \in V^T$. Let
  $d\in\Delta$ and assume that $\abs{\vg}\geq 1$ and $\vg_d \neq
  0$. We apply Lemma \ref{lem:major_arcs_S} with $\va :=
  (P^{-j}\gamma_{j,i})_{j,i}$ and $P := \abs{\vg}^A$ for fixed large
  $A$. Clearly, $\va \in \M_{\mathbf{0}}$ as soon as $A \geq
  1/\varpi$. Since $\Sigma(\mathbf{0}) = 1$, we
  obtain
  \begin{equation}\label{eq:sing_int_J_estimate_1} J(\vg) \ll L
    + \abs{\vg}P^{-1}\ll L + \abs{\vg}^{-a/s_d},
  \end{equation} if $A$ was chosen big enough.

  If $L \leq \abs{\vg}^{-a/s_d+\epsilon}$ then
  \eqref{eq:sing_int_J_estimate_1} yields
  \begin{equation*}
    J(\vg) \ll \abs{\vg}^\epsilon\abs{\vg}^{-a/s_d} \leq \abs{\vg}^\epsilon\abs{\vg_d}^{-a/s_d},
  \end{equation*}
  and we can choose $u_d=1$. Therefore, we may assume from now on that
  \begin{equation}\label{eq:L_lower_bound_assumption}
    L \geq \abs{\vg}^{-a/s_d+\epsilon},
  \end{equation}
  so that
  \begin{equation}\label{eq:sing_int_J_estimate_2}
    J(\vg)\ll L.
  \end{equation}
  The remainder of this proof is devoted to the deduction of suitable
  upper bounds for $L$. Let us first assume that $\va\in I_j^{(1)}$
  for some $j \in \Delta$. Then the definition of $I_j^{(1)}$, see
  \eqref{eq:L_bound_j}, yields an upper bound
  \begin{equation*} 
     L\ll \abs{\vg}^{-a/s_d+\epsilon}\leq \abs{\vg}^\epsilon\abs{\vg_d}^{-a/s_d},
   \end{equation*} provided that we have chosen $A$ big enough to
   ensure that $A(s-B_j)>a(2^{j-1}+s_{j+1}(s-B_j))/(ns_d)$.

   If $\va\in I^{(2)}$ then Lemma \ref{lem:iterativeargument}
   yields $q_d \in \n$, and $\vn_d\in\n^{t_d}$ satisfying
   \begin{align}
       \abs{q_d}&\leq Q_d&& &\label{eq:singint_q_d_bound}\\
  \abs{q_d\alpha_{d,i} - \nu_{d,i}}&\leq Q_dP^{-d}&& &\text{ for all
  } 1\leq i\leq t_d,\label{eq:singint_q_d_approx} 
   \end{align}
   with $Q_d$ defined by \eqref{eq:Q_j_def}.

   Suppose that $\nu_{d,i}\neq 0$ for some $1\leq i
   \leq t_d$. Then
   \begin{equation*} 1\leq \abs{\nu_{d,i}} \ll Q_d
     P^{-d}\abs{\gamma_{d,i}} + Q_dP^{-d}\ll Q_dP^{-d}|\vg|=
     L^{-s_d/n}(\log P)^{e(d)}P^{-d}|\vg|.
   \end{equation*}
   This yields an upper bound
   \begin{equation*}
     L \ll (\log P)^{n e(d)/s_d}
     P^{-nd/s_d}\abs{\vg}^{n/s_d}\ll \abs{\vg}^\epsilon
     \abs{\vg}^{\frac{n}{s_d}(1-dA)} \ll \abs{\vg}^\epsilon
     \abs{\vg}^{-a/s_d}\ll \abs{\vg}^\epsilon
     \abs{\vg_d}^{-a/s_d}
   \end{equation*}
   if $A$ is chosen big enough. We are left with the case where $\vn_d
   = \mathbf{0}$. Let $t$ be a generator of the principal ideal $q_d\Ok$
   with the property that
  \begin{equation}\label{eq:t_conjugates}
     N(q_d)^{1/n} \ll \absv{t} \ll N(q_d)^{1/n}\text{ for all }v\in\archplaces,
   \end{equation}
   and let $u_d := q_d/t \in \units$. Thanks to
   \eqref{eq:t_conjugates}, \eqref{eq:singint_q_d_bound} and
   \eqref{eq:L_lower_bound_assumption}, we obtain
   \begin{equation*}
     \abs{u_d} \asymp N(q_d)^{-1/n}\abs{q_d}\leq Q_d = L^{-s_d/n}(\log P)^{e(d)} \ll \abs{\vg}^{a/n}.
   \end{equation*}
   Moreover, due to \eqref{eq:singint_q_d_approx}, for all $1\leq i
   \leq t_d$ we have
   \begin{equation}\label{eq:critical_L_estimate}
     P^{-d}\abs{u_d\vg_{d}} =
     \abs{u_d\va_{d}}\asymp N(q_d)^{-1}\abs{q_d\va_{d}}\leq Q_d P^{-d} = L^{-s_d/n}(\log P)^{e(d)}P^{-d},
  \end{equation}
  so
  \begin{equation*}
    L \ll \abs{\vg}^{\epsilon}\abs{u_d\vg_d}^{-n/s_d}.
  \end{equation*}
  The estimate $J(\vg)\ll\abs{\vg}^\epsilon\abs{u_d\vg_d}^{-a/s_d}$
  follows immediately from this and \eqref{eq:sing_int_J_estimate_2}
  if $\abs{u_d\vg_d}\geq 1$, and from the trivial estimate $J(\vg)\ll
  1$ if $\abs{u_d\vg_d}\leq 1$.

  For the proof of \eqref{eq:J_estimate_without_u}, we proceed as
  above with $a=1$, until \eqref{eq:critical_L_estimate}. Here, we conclude that
  \begin{equation*}
    P^{-d}\abs{\vg_{d}} = \abs{\va_{d}}\ll \abs{q_d^{-1}}\abs{q_d\va_{d}} \ll Q_d^{n-1}\cdot Q_dP^{-d} = L^{-s_d}(\log P)^{n e(d)}P^{-d},
  \end{equation*}
  and thus 
  \begin{equation*}
    L \ll \abs{\vg}^\epsilon\abs{\vg_{d}}^{-1/s_d}.
  \end{equation*}
\end{proof}
  We write
  \begin{equation*}
    N(\vg_d) := \prod_{v\in\archplaces}\absv{\vg_d}^{n_v},
  \end{equation*}
  where
  $\absv{\vg_d}:=\max\{\absv{\gamma_{d,1}},\ldots,\absv{\gamma_{d,t_d}}\}$
  and $n_v := [K_v : \RR]$ is the local degree. Let $b \in (0,1)$ be a
  constant to be specified later, and for $H \geq 1$ let
  \begin{align*}
   M(H) &:= \{\vg \in V^T \where \abs{\vg} \gg H\},\\
    M_{>}(H) &:= \{\vg \in V^T \where \abs{\vg}\in (H, 2H] \text{ and there exists $d$ with }N(\vg_{d}) \geq H^b\},\\
   M_{<}(H) &:= \{\vg \in V^T \where \abs{\vg}\in (H, 2H] \text{ and for all $d$, we have }N(\vg_{d}) \leq H^b\}.
 \end{align*}
  Define the integral
  \begin{equation*}
    I(H) := \int_{M(H)}\max_{d\in\Delta}\{\abs{\vg_d}^{n/s_d}\}^{-1}\dd\vg.
  \end{equation*}
 
 \begin{lemma}\label{lem:singint_without_u}
   There is $\delta > 0$ such that, for $H \geq 1$,
   \begin{equation*}
     I(H) \ll H^{-\delta}.
   \end{equation*}
 \end{lemma}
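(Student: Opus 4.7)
The plan is to combine a dyadic decomposition of $M(H)$ with a weighted geometric mean bound on the integrand, using the hypothesis \eqref{eq:degree_condition}. The key observation is that \eqref{eq:degree_condition} gives $\sum_{d\in\Delta}s_dt_d<1-s_1\leq 1$ strictly, and hence for every non-empty subset $\Delta^+\subseteq\Delta$ one has $\sum_{d\in\Delta^+}s_dt_d<1$ as well. I may therefore fix, for each such $\Delta^+$, weights $(\mu_d)_{d\in\Delta^+}$ with $\mu_d>s_dt_d$ and $\sum_{d\in\Delta^+}\mu_d=1$, and set $\eta_d:=(\mu_d-s_dt_d)n/s_d>0$.

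First I would cover $M(H)\subseteq\bigcup_{k\geq 0}S_k$ by dyadic shells $S_k:=\{\vg\in V^T\where H_k<\abs{\vg}\leq 2H_k\}$ with $H_k:=2^kH$. Inside each shell, partition further according to the index set $\Delta^+(\vg):=\{d\in\Delta\where\abs{\vg_d}\geq 1\}$ and the maximizer $d^*\in\Delta$ of $d\mapsto\abs{\vg_d}$; since $\abs{\vg_{d^*}}>H_k\geq 1$, one has $d^*\in\Delta^+(\vg)$, in particular $\Delta^+(\vg)\neq\emptyset$. On such a piece, for $d\notin\Delta^+$ we have $\abs{\vg_d}^{-n/s_d}>1$, whereas $\abs{\vg_{d^*}}^{-n/s_{d^*}}\ll H_k^{-n/s_{d^*}}\leq 1$, so the minimum in the integrand is attained on $\Delta^+$. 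The weighted geometric mean inequality, applied with the weights $(\mu_d)_{d\in\Delta^+}$, then gives
\[
\bigl(\max_{d\in\Delta}\abs{\vg_d}^{n/s_d}\bigr)^{-1}=\min_{d\in\Delta}\abs{\vg_d}^{-n/s_d}\leq\prod_{d\in\Delta^+}\abs{\vg_d}^{-\mu_dn/s_d}.
\]

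I would then integrate the right-hand side factorwise. For $d\notin\Delta^+$ one has $\int_{\abs{\vg_d}<1}\dd\vg_d\ll 1$; for $d\in\Delta^+\setminus\{d^*\}$, the condition $\mu_dn/s_d>nt_d$ ensures that the radial integral in polar-type coordinates on $V^{t_d}$ converges at infinity, giving $\int_{1\leq\abs{\vg_d}\leq 2H_k}\abs{\vg_d}^{-\mu_dn/s_d}\dd\vg_d\ll 1$; and for $d=d^*$, a direct computation yields $\int_{H_k<\abs{\vg_{d^*}}\leq 2H_k}\abs{\vg_{d^*}}^{-\mu_{d^*}n/s_{d^*}}\dd\vg_{d^*}\ll H_k^{-\eta_{d^*}}$. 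Summing over the finitely many pairs $(\Delta^+,d^*)$ bounds the contribution of $S_k$ by $H_k^{-\delta'}$ for a uniform $\delta'>0$, and the geometric series $\sum_{k\geq 0}H_k^{-\delta'}\ll H^{-\delta'}$ gives $I(H)\ll H^{-\delta}$.

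The main technical obstacle is that the naive weighted geometric mean bound $\min_{d\in\Delta}\abs{\vg_d}^{-n/s_d}\leq\prod_{d\in\Delta}\abs{\vg_d}^{-\mu_dn/s_d}$ is not locally integrable: since $\mu_dn/s_d>nt_d$ is precisely what makes the factor integrable at infinity in $V^{t_d}$, it also creates a non-integrable singularity at the origin. Restricting the product to $\Delta^+(\vg)$ before invoking the inequality, as above, confines the dangerous factors to regions where $\abs{\vg_d}\geq 1$, while the complementary coordinates contribute only bounded volume; this trade-off is the crux of the argument and is exactly what is bought by the hypothesis $\sum_ds_dt_d<1$ coming from \eqref{eq:degree_condition}.
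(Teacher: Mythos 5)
Your proof is correct and takes essentially the same route as the paper: the paper identifies $V^T$ with $\RR^{nT}$ and invokes the weighted-geometric-mean argument following Lemma 8.3 of Browning--Heath-Brown, which is precisely the dyadic-shell plus weighted geometric mean estimate you spell out, with $\sum_{d}s_dt_d<1$ furnishing the room to choose $\mu_d>s_dt_d$. The only cosmetic difference is in taming the origin singularity: the paper's parallel write-out for $I_{<}(H)$ in Lemma~\ref{lem:singint_norm_small_estimate} replaces $\abs{\vg_d}$ by $1+\abs{\vg_d}$ before applying the geometric mean, whereas you partition on $\Delta^+$ first; both devices achieve the same thing.
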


 \begin{proof}
   We identify $V^T$ with $\RR^{nT}$ using the basis $\omega_1,
   \ldots, \omega_n$ of $V$. The exponent $n/s_d$ in the definition
   $I(H)$ is good enough for the arguments given after \cite[Lemma
   8.3]{BrowningHeathBrown2014} to apply.
 \end{proof}

  For $\vu=(u_d)_{d\in\Delta}$ with $u_d \in \units$ for all $d\in\Delta$, let
  \begin{equation*}
    I_{>}(\vu, H) := \int_{M_{>}(H)}\max_{\substack{d\in\Delta}}\{\abs{u_d\vg_d}^{n/s_d}\}^{-1}\dd\vg.
  \end{equation*}

  \begin{lemma}\label{lem:singint_normbig}
    Let $\delta$ be as in Lemma \ref{lem:singint_without_u}. Then, for $H \geq 1$,
    \begin{equation*}
      I_{>}(\vu,H)\ll H^{-\delta b/n}.
    \end{equation*}
  \end{lemma}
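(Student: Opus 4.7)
The plan is to reduce $I_{>}(\vu, H)$ directly to the integral $I(\cdot)$ from Lemma \ref{lem:singint_without_u} by the change of variables $\vg'_d := u_d \vg_d$ for each $d\in\Delta$. Since every $u_d\in\units$ satisfies $|N(u_d)|=1$, multiplication by $u_d$ is volume-preserving on $V^{t_d}$, the total Jacobian is $1$, and the integrand $\max_d\{\abs{u_d\vg_d}^{n/s_d}\}^{-1}$ becomes $\max_d\{\abs{\vg'_d}^{n/s_d}\}^{-1}$, which is precisely the integrand of $I(H')$.

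The crucial step is to show that the image $\phi(M_{>}(H))$ of $M_{>}(H)$ under this substitution is contained in $M(c H^{b/n})$ for some positive constant $c$ depending only on $K$ and the basis. Indeed, if $\vg\in M_{>}(H)$ then there is $d_0\in\Delta$ with $N(\vg_{d_0})\geq H^b$. Since $|N(u_{d_0})|=1$ we have $N(u_{d_0}\vg_{d_0}) = N(\vg_{d_0})\geq H^b$. Using the elementary comparison
\[
\abs{x}^n \;\gg\; N(x) \;=\; \prod_{v\in\archplaces}\absv{x}^{n_v}
\qquad (x\in V^{t_{d_0}}),
\]
which follows from $\max_v y_v \geq \prod_v y_v^{n_v/n}$ applied to $y_v := \absv{u_{d_0}\vg_{d_0}}$ together with $\abs{\cdot}\asymp \max_v\absv{\cdot}$, we conclude
\[
\abs{\vg'}\;\geq\;\abs{u_{d_0}\vg_{d_0}}\;\gg\;N(u_{d_0}\vg_{d_0})^{1/n}\;\geq\;H^{b/n}.
\]
Hence $\phi(M_{>}(H))\subseteq M(c H^{b/n})$.

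Combining the two steps, and using positivity of the integrand to enlarge the domain from $\phi(M_{>}(H))$ to $M(c H^{b/n})$, gives
\[
I_{>}(\vu,H) \;=\; \int_{\phi(M_{>}(H))}\max_{d\in\Delta}\{\abs{\vg'_d}^{n/s_d}\}^{-1}\dd\vg' \;\leq\; I(cH^{b/n}) \;\ll\; H^{-\delta b/n},
\]
by Lemma \ref{lem:singint_without_u}.

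The only point requiring care is uniformity in $\vu$: the Jacobian identity depends only on $|N(u_d)|=1$, and the inequality $\abs{x}^n\gg N(x)$ has an implicit constant that depends solely on $K$ and the chosen basis $\omega_1,\dots,\omega_n$, never on the particular units $u_d$. This is what allows the single bound $H^{-\delta b/n}$ to hold uniformly over all choices of $\vu$.
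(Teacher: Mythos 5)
Your proof is correct and follows essentially the same approach as the paper: a change of variables by the unit-multiplication map $\phi$ (Jacobian $\asymp 1$ since $|N(u_d)|=1$), combined with the observation that $N(u_d\vg_d)=N(\vg_d)\geq H^b$ forces $\max_v\absv{u_d\vg_d}\geq H^{b/n}$ and hence $\phi(M_{>}(H))\subseteq M(cH^{b/n})$, followed by an application of Lemma~\ref{lem:singint_without_u}. The only cosmetic differences are that you spell out the geometric-mean inequality explicitly and note uniformity in $\vu$ at the end, both of which the paper leaves implicit.
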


  \begin{proof}
    Let $\phi : V^T \to V^T$ be the $\RR$-linear transformation
    $(\gamma_{d,i})_{d,i}\mapsto (u_d\gamma_{d,i})_{d,i}$. Since the
    $u_d$ are all units, we have $\det \phi \asymp 1$. Moreover, let
    $\vg \in M_{>}(H)$ and $d$ such that $N(u_d\vg_{d})=N(\vg_{d})\geq
    H^b$. Then in particular
    \begin{equation*}
      \max_{v\in\archplaces}\{\lvert u_d\vg_{d}\rvert_v\} \geq H^{b/n},
    \end{equation*}
    and thus $\abs{\phi(\vg)}\gg H^{b/n}$. We have shown that $\phi(M_{>}(H)) \subseteq M(H^{b/n})$. By Lemma \ref{lem:singint_without_u}, we obtain
    \begin{equation*}
      I_{>}(\vu,H) \asymp \int_{\phi(M_{>}(H))}\max_{d\in\Delta}\{\abs{\vn_d}^{n/s_d}\}^{-1}\dd\vn \ll \int_{M(H^{b/n})}\max_{d\in\Delta}\{\abs{\vn_d}^{n/s_d}\}^{-1}\dd\vn = I(H^{b/n})\ll H^{-b\delta/n}.
    \end{equation*}
  \end{proof}

Let
\begin{equation*}
  I_{<}(H) := \int_{M_{<}(H)}\max_{d\in\Delta}\{\abs{\vg_d}^{1/s_d}\}^{-1}\dd\vg.
\end{equation*}
We will prove that $I_{<}(H)\ll H^{-\delta}$ for some $\delta>0$.

\begin{lemma}\label{lem:elementary_integral}
  For $A_1, \ldots, A_n, B \in (0,\infty)$, let
  \begin{equation*}
    I(A_1, \ldots, A_n, B) := \int_{\substack{0\leq x_i \leq A_i\\x_1\cdots x_n \leq B}}\dd x_1 \cdots \dd x_n.
  \end{equation*}
  Then
  \begin{equation*}
    I(A_1, \ldots, A_n, B) \ll B\log\left(\frac{A_1\cdots A_n}{B} + 2\right)^{n-1}.
  \end{equation*}
\end{lemma}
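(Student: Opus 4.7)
I would prove Lemma \ref{lem:elementary_integral} by induction on $n$.

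\textbf{Base case.} For $n=1$, the integral is simply $\min(A_1,B)\leq B$, and the factor $\log(\cdot)^{n-1}=\log(\cdot)^0=1$, so the bound holds trivially.

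\textbf{Inductive step.} Assume the bound holds for $n-1$ and consider $n\geq 2$. Integrate out $x_n$ first:
\begin{equation*}
I(A_1,\ldots,A_n,B)=\int_0^{A_n}I(A_1,\ldots,A_{n-1},B/x_n)\,\dd x_n.
\end{equation*}
The plan is to split the outer integral at $x_n = B/(A_1\cdots A_{n-1})$. On the range $0 \leq x_n \leq \min(A_n, B/(A_1\cdots A_{n-1}))$, the constraint $x_1\cdots x_{n-1}\leq B/x_n$ is automatically satisfied inside the full box, so the inner integral equals $A_1\cdots A_{n-1}$. This region contributes at most $A_1\cdots A_{n-1}\cdot B/(A_1\cdots A_{n-1}) = B$ (or less if $A_n$ is the smaller bound), which is absorbed into the claimed bound.

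\textbf{Applying induction on the remaining range.} On $x_n \in (B/(A_1\cdots A_{n-1}), A_n]$ (possibly empty), the induction hypothesis gives
\begin{equation*}
I(A_1,\ldots,A_{n-1},B/x_n) \ll \frac{B}{x_n}\log\!\left(\frac{A_1\cdots A_{n-1}x_n}{B}+2\right)^{n-2}.
\end{equation*}
Substituting $u = A_1\cdots A_{n-1}x_n/B$ turns the remaining contribution into
\begin{equation*}
B\int_1^{A_1\cdots A_n/B}\frac{\log(u+2)^{n-2}}{u}\,\dd u.
\end{equation*}
Writing $U := A_1\cdots A_n/B$ and bounding $\log(u+2)\leq \log(U+2)$ on the interval of integration, this is at most $B\log(U+2)^{n-2}\log U \ll B\log(U+2)^{n-1}$. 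Adding the two contributions yields the claimed bound.

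\textbf{Expected difficulty.} The argument is elementary; the only subtlety is ensuring the splitting point is chosen correctly and that the case where $A_n < B/(A_1\cdots A_{n-1})$ (which makes the second region empty and keeps the bound valid trivially) is handled. The substitution $u=A_1\cdots A_{n-1}x_n/B$ is the key step that reduces the induction to a single one-dimensional logarithmic integral, so I expect no real obstacle.
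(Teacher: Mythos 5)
Your proof is correct and matches the paper's (unwritten) approach: the paper's own justification is simply ``Elementary computations using induction,'' and your induction on $n$, splitting the outermost variable at $x_n = B/(A_1\cdots A_{n-1})$ and reducing the remaining range to the one-dimensional logarithmic integral $\int_1^U u^{-1}\log(u+2)^{n-2}\,\dd u$, is exactly the elementary argument intended.
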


\begin{proof}
 Elementary computations using induction.
\end{proof}

\begin{lemma}\label{lem:singint_integral_large}
  Let $\theta > 0$. For any small $\epsilon> 0$, and $H \geq 1$, we
  have
  \begin{equation*}
    \int_{\substack{\gamma\in V\\N(\gamma)\leq H^b\\\abs{\gamma}\geq H}}\frac{1}{(1+\abs{\gamma})^{1+\theta}}\dd \gamma \ll H^{-1-\theta + \epsilon + b}.
  \end{equation*}
\end{lemma}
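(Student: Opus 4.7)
The plan is to transfer the integral into archimedean coordinates on $V \cong \prod_{v\in\archplaces}K_v$ and then reduce it to the elementary multivariate volume estimate of Lemma \ref{lem:elementary_integral}.

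Concretely, I would decompose $\gamma = (\gamma^{(v)})_{v\in\archplaces}$ and set $t_v := \absv{\gamma^{(v)}}$. Since $\abs{\gamma}\asymp\max_v t_v$, $N(\gamma) = \prod_v t_v^{n_v}$, and the Haar measure on $V$ (normalized by $\vol(R)=1$) is, up to a constant depending only on $K$ and the basis $\omega_1,\ldots,\omega_n$, a multiple of the product of the usual measures on each $K_v$, polar coordinates on each $K_v$ give $\dd\gamma\asymp\prod_v t_v^{n_v-1}\dd t_v\,\dd\sigma_v$. After integrating out the angular variables $\sigma_v$ and splitting according to the place $v_0$ at which the maximum $T:=\max_v t_v$ is attained, the integral is bounded by
\[
\sum_{v_0\in\archplaces}\int_H^\infty\frac{T^{n_{v_0}-1}}{(1+T)^{1+\theta}}\,I_{v_0}(T,H)\,\dd T,
\]
where $I_{v_0}(T,H)$ denotes the integral of $\prod_{v\neq v_0}t_v^{n_v-1}\dd t_v$ over the region defined by $0\leq t_v\leq T$ for $v\neq v_0$, subject to the single constraint $\prod_{v\neq v_0}t_v^{n_v}\leq H^bT^{-n_{v_0}}$ (coming from $N(\gamma)\leq H^b$).

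Next, I would substitute $s_v:=t_v^{n_v}$ for each $v\neq v_0$ (a change of variables with bounded Jacobian), turning $I_{v_0}(T,H)$ into an integral of the form treated by Lemma \ref{lem:elementary_integral} with $A_v=T^{n_v}$ and $B=H^bT^{-n_{v_0}}$. That lemma yields
\[
I_{v_0}(T,H)\ll H^bT^{-n_{v_0}}\bigl(\log(T^nH^{-b}+2)\bigr)^{r-2},
\]
where $r:=\#\archplaces$. Plugging this back in, bounding $(1+T)^{-1-\theta}\leq T^{-1-\theta}$ for $T\geq H\geq 1$, and cancelling the $T^{n_{v_0}-1}\cdot T^{-n_{v_0}}$ factors reduces the $T$-integral to
\[
\int_H^\infty H^bT^{-2-\theta}\bigl(\log(T^nH^{-b}+2)\bigr)^{r-2}\,\dd T.
\]
Since $r$ depends only on $K$, the polylogarithmic factor is absorbed into $(TH)^{\epsilon/2}$, and elementary calculus gives $\int_H^\infty T^{-2-\theta+\epsilon/2}\dd T\ll H^{-1-\theta+\epsilon/2}$. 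Multiplying by $H^b$ and summing over the finitely many choices of $v_0$ yields the claimed bound $\ll H^{-1-\theta+b+\epsilon}$.

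I expect the main obstacle to be the careful bookkeeping in the passage from the basis-aligned measure $\dd\gamma$ to the product archimedean measure, together with verifying that the restriction $\abs{\gamma}\geq H$ corresponds, up to absolute constants, to $\max_v t_v\gg H$ (so the lower limit $T\geq H$ is legitimate after absorbing constants into the implicit $\ll$). Once these identifications are settled, the remainder is a direct application of Lemma \ref{lem:elementary_integral} and a routine one-variable estimate.
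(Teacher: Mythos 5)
Your proposal is correct and follows essentially the same route as the paper's proof: pass to archimedean polar coordinates, fix the place at which $\abs{\gamma}$ is attained, reduce the inner integral to Lemma~\ref{lem:elementary_integral}, and integrate the remaining one-variable expression. The only cosmetic difference is that you work with the unpowered local absolute values $t_v=\absv{\gamma^{(v)}}$ and then substitute $s_v=t_v^{n_v}$, whereas the paper sets $t_v=\absv{\gamma}^{n_v}$ from the outset, which makes the Jacobian factors disappear immediately; the bookkeeping issues you flag (measure comparison and absorbing absolute constants in the range $\abs{\gamma}\geq H$) are indeed benign, exactly as you expect.
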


\begin{proof}
  For $v\in\archplaces$, let $t_v := \absv{\gamma}^{n_v}$, and assume
  that $\max_{v}\absv{\gamma} = \absv{\gamma}$. Passing to
  polar coordinates at the complex places, we see that the integral in the lemma is
  \begin{equation*}
    \ll \int_{t_{w}\gg H^{n_{w}}}\frac{1}{t_{w}^{(1+\theta)/n_w}}\left(\int_{\substack{t_v, v\neq w\\t_v\leq t_w^{n_v/n_w}\\\prod_{v\neq w}t_v\leq H^b/t_w}}\prod_{v\neq w}\dd t_v \right)\dd t_w.
  \end{equation*}
  Using the notation of Lemma \ref{lem:elementary_integral}, the inner
  integral is just
  \begin{equation*}
    I((t_w^{n_v/n_w})_{v\neq w}, H^b/t_w) \ll \frac{H^b}{t_w^{1-\epsilon}},
  \end{equation*}
  and thus the integral in the lemma is
  \begin{equation*}
    \ll H^b\int_{t_w\gg H^{n_w}}\frac{1}{t_w^{(1+\theta)/n_w+1-\epsilon}}\dd t_w \ll H^{b-1-\theta+n_w\epsilon}.
  \end{equation*}
\end{proof}

\begin{lemma}\label{lem:singint_integral_small}
  Let $\theta > 0$. For any small $\epsilon> 0$, we have
  \begin{equation*}
    \int_{\substack{\gamma\in V\\N(\gamma)\leq H^b}}\frac{1}{(1+\abs{\gamma})^{1+\theta}}\dd \gamma \ll H^{b}.
  \end{equation*}
\end{lemma}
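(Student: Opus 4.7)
The plan is to split the range of integration at $\abs{\gamma} = H$ and handle the two resulting pieces by different methods. Writing
\begin{equation*}
  \int_{\substack{\gamma \in V\\N(\gamma) \leq H^b}}\frac{\dd\gamma}{(1+\abs{\gamma})^{1+\theta}} = \int_{\substack{\gamma \in V\\N(\gamma)\leq H^b\\\abs{\gamma}\leq H}}\frac{\dd\gamma}{(1+\abs{\gamma})^{1+\theta}} + \int_{\substack{\gamma \in V\\N(\gamma)\leq H^b\\\abs{\gamma}\geq H}}\frac{\dd\gamma}{(1+\abs{\gamma})^{1+\theta}},
\end{equation*}
the second integral is precisely the object already controlled by Lemma \ref{lem:singint_integral_large}, which furnishes a bound of $\ll H^{b-1-\theta+\epsilon}$; this is comfortably $\ll H^b$ as soon as $\epsilon < 1+\theta$.

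For the range $\abs{\gamma} \leq H$, I would bound the integrand trivially by $1$ and estimate the volume of the region $\{\gamma \in V : \abs{\gamma} \leq H,\ N(\gamma) \leq H^b\}$. The plan is to reuse the change of coordinates from the proof of Lemma \ref{lem:singint_integral_large}: set $t_v := \absv{\gamma}^{n_v}$ for each $v \in \archplaces$, pass to polar coordinates at the complex places so that the angular factors absorb into constants, and observe that $\dd\gamma$ is then a constant multiple of $\prod_{v \in \archplaces}\dd t_v$. The two constraints translate into $0 \leq t_v \leq H^{n_v}$ for each $v$ and $\prod_v t_v \leq H^b$, so Lemma \ref{lem:elementary_integral} applied with $A_v := H^{n_v}$ and $B := H^b$ yields
\begin{equation*}
  \vol\{\abs{\gamma}\leq H,\ N(\gamma) \leq H^b\} \ll H^b \log\!\left(\frac{H^n}{H^b}+2\right)^{r-1} \ll H^{b+\epsilon},
\end{equation*}
where $r = \abs{\archplaces}$, the logarithmic factor being absorbed into $H^\epsilon$ for any fixed $\epsilon > 0$.

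The only delicate point is this mild absorption of the $(\log H)^{r-1}$ factor into the target $H^b$, which is precisely what the small parameter $\epsilon$ in the statement is there to accommodate. Aside from this bookkeeping step the argument is a direct combination of techniques already developed for Lemmas \ref{lem:singint_integral_large} and \ref{lem:elementary_integral} and presents no new obstacle; in particular, one does not need to redo the more delicate splitting by the place maximizing $\absv{\gamma}$, since the tail piece is delegated entirely to the previous lemma and the bounded piece succumbs to a single application of the elementary volume estimate.
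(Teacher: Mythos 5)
Your decomposition at $\abs{\gamma}=H$ and the delegation of the tail to Lemma~\ref{lem:singint_integral_large} are both fine. The gap is in the range $\abs{\gamma}\leq H$. Bounding the integrand trivially by $1$ and invoking Lemma~\ref{lem:elementary_integral} with $A_v := H^{n_v}$ and $B := H^b$ produces a factor $(\log H)^{r-1}$, where $r=\abs{\archplaces}$, and this factor is genuinely present: for $r\geq 2$ the volume of $\{\gamma\in V \where \abs{\gamma}\leq H,\ N(\gamma)\leq H^b\}$ is in fact $\asymp H^b(\log H)^{r-1}$. (For a real quadratic field one computes directly
\begin{equation*}
\int_0^{H^{b-1}}H\dd t_1 + \int_{H^{b-1}}^{H}\frac{H^b}{t_1}\dd t_1 \asymp H^b\log H.)
\end{equation*}
You therefore establish only $\ll H^{b+\epsilon}$, which is strictly weaker than the claimed $\ll H^b$. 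The parameter $\epsilon$ in the statement is used internally in the paper's proof to absorb a logarithm inside a subintegral; it does not license an $H^\epsilon$ loss in the final bound.

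The paper avoids the loss by not discarding the decay of the kernel on the bounded range. It performs the same substitution $t_v=\absv{\gamma}^{n_v}$, with $w$ the place maximizing $\absv{\gamma}$, but splits the resulting $t_w$-integral at the much smaller threshold $t_w=H^{bn_w/n}$. For $t_w\leq H^{bn_w/n}$ it uses the crude box bound $I(\,\cdot\,)\leq\prod_{v\neq w}t_w^{n_v/n_w}$ (\emph{not} Lemma~\ref{lem:elementary_integral}, hence no log), which integrates over $t_w$ to exactly $\asymp H^b$; for $t_w>H^{bn_w/n}$ the kernel's factor $(1+t_w^{1/n_w})^{-(1+\theta)}$ together with Lemma~\ref{lem:elementary_integral} makes the tail $\ll H^{b(1-1/n-\theta/n+\epsilon n_w/n)}\ll H^b$ once $\epsilon$ is small. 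Note that your weaker conclusion $\ll H^{b+\epsilon}$ would in fact still suffice for the lemma's only use, in the proof of Lemma~\ref{lem:singint_norm_small_estimate}, since $\epsilon$ may be chosen arbitrarily small there; so the gap affects the lemma as stated but not the downstream argument.
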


\begin{proof}
  We start as in the proof of Lemma \ref{lem:singint_integral_large} and see that the integral is
  \begin{align*}
    &\ll \int_{t_w=0}^{\infty}\frac{1}{(1+t_w^{1/n_w})^{(1+\theta)}}\cdot  I((t_w^{n_v/n_w})_{v\neq w}, H^b/t_w)\dd t_w.\\
&\ll \int_{t_w=0}^{H^{bn_w/n}}t_w^{\sum_{v\neq w}n_v/n_w}\dd t_w + H^b\int_{t_w = H^{bn_w/n}}^\infty\frac{1}{t_w^{(1+\theta)/n_w+1-\epsilon}}\dd t_w\\
&\ll H^b + H^{b(1-1/n -\theta/n +\epsilon n_w/n)} \ll H^b.
  \end{align*}
\end{proof}

\begin{lemma}\label{lem:singint_norm_small_estimate}
  Assume that $b \leq 1/T$. Then there is
  $\delta>0$ such that, for $H \geq 1$,
  \begin{equation*}
    I_{<}(H) \ll H^{-\delta}.
  \end{equation*}
\end{lemma}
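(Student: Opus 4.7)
The plan combines a pointwise bound for the integrand on $M_{<}(H)$ with a volume bound. From \eqref{eq:def_sd} the sequence $(s_d)$ is non-increasing in $d$. If $\vg\in M_{<}(H)$, then $\abs{\vg}>H\geq 1$, so some $d_0\in\Delta$ satisfies $\abs{\vg_{d_0}}\geq H$, yielding
\[
\max_{d\in\Delta}\{\abs{\vg_d}^{1/s_d}\}\geq \abs{\vg_{d_0}}^{1/s_{d_0}}\geq H^{1/s_{d_0}}\geq H^{1/s_1}.
\]
Hence the integrand is bounded by $H^{-1/s_1}$ throughout $M_{<}(H)$.

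Second, since the constraints defining $M_{<}(H)$ factor over $d\in\Delta$, it suffices to bound the individual factor
\[
\mathrm{vol}\{\vg_d\in V^{t_d} : \abs{\vg_d}\leq 2H,\ N(\vg_d)\leq H^b\}.
\]
I will identify $V^{t_d}\cong\prod_{v\in\archplaces}K_v^{t_d}$ and parameterize $\vg_d$ by $T_v:=\abs{\vg_d}_v$. Integrating out the angular directions gives a volume of order
\[
\int_{\substack{0\leq T_v\leq cH\\ \prod_v T_v^{n_v}\leq H^b}}\prod_{v\in\archplaces}T_v^{n_v t_d-1}\,dT_v,
\]
and the substitution $w_v=T_v^{n_v t_d}$ normalizes the exponents to zero, so Lemma \ref{lem:elementary_integral} yields $\ll H^{bt_d+\epsilon}$. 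Taking the product over $d\in\Delta$ and using $\sum_{d\in\Delta} t_d=T$, I obtain $\mathrm{vol}(M_{<}(H))\ll H^{bT+\epsilon}$.

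Combining the two estimates gives $I_{<}(H)\ll H^{bT-1/s_1+\epsilon}$. The assumption $b\leq 1/T$ gives $bT\leq 1$, while \eqref{eq:degree_condition} forces $s_1<1$ and hence $1/s_1>1$, so $1/s_1>bT$. For sufficiently small $\epsilon>0$, one can take $\delta:=1/s_1-bT-\epsilon>0$, as required. The main obstacle is the volume estimation at each archimedean place; the key trick is the substitution $w_v=T_v^{n_v t_d}$, which kills the exponents $n_v t_d-1$ and makes Lemma \ref{lem:elementary_integral} directly applicable.
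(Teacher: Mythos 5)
Your proof is correct and takes a genuinely different route from the paper. The paper's proof replaces the integrand by a weighted geometric mean $\prod_{d\in\Delta}(1+\abs{\vg_d})^{-\lambda_d/s_d}$ (with $\lambda_d = s_d t_d + \theta/\abs{\Delta}$), then factors the integral over $M_{<}(H)$ into a product of scalar integrals over $\gamma_{d,i}\in V$, handled by Lemmas \ref{lem:singint_integral_large} and \ref{lem:singint_integral_small}. Your approach instead decouples the integrand and the domain: you use that $(s_d)_d$ is non-increasing and $H\geq 1$ to get the uniform pointwise bound $\max_d\abs{\vg_d}^{1/s_d}\geq H^{1/s_1}$ on $M_{<}(H)$, and then you estimate $\vol(M_{<}(H))$ directly by radial integration in each block $V^{t_d}$. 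Your change of variables $w_v=T_v^{n_vt_d}$ reduces the per-block volume to an instance of Lemma \ref{lem:elementary_integral}, giving $\ll H^{bt_d+\epsilon}$ and hence $\vol(M_{<}(H))\ll H^{bT+\epsilon}$. Combined with $b\leq 1/T$ (so $bT\leq 1$) and $s_1<1$ from \eqref{eq:degree_condition} (so $1/s_1>1$), the exponent $bT-1/s_1+\epsilon$ is negative for small $\epsilon$.

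Both proofs ultimately rest on Lemma \ref{lem:elementary_integral} and on exploiting the small-norm constraint $N(\vg_d)\leq H^b$. Your version is slightly more elementary and self-contained, bypassing Lemmas \ref{lem:singint_integral_large}--\ref{lem:singint_integral_small} entirely, at the price of giving an exponent $\delta$ close to $1/s_1-bT$ rather than the paper's $\theta_{d_0}$; either is adequate since the lemma only asserts existence of some $\delta>0$. One small remark: in the degenerate case $D=1$ one has $s_1=0$ and your pointwise bound is ill-posed (as is the paper's $\theta_{d_0}$); in that case the integrand is identically zero on $\{\abs{\vg}>1\}$ and the lemma is trivial, so this is not a genuine gap, but a sentence disposing of it would make the argument airtight.
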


\begin{proof}
  Let $d_0\in\Delta$ and assume that $\abs{\vg} =
  \abs{\vg_{d_0}}\in(H, 2H]$. Since $\abs{\vg} > H \geq 1$, we have
  \begin{equation*}
    \max_{d\in\Delta}\{\abs{\vg_d}^{1/s_d}\} \gg \max_{d\in\Delta}\{(1+\abs{\vg_d})^{1/s_d}\} \geq \prod_{d\in\Delta}(1+\abs{\vg_d})^{\lambda_d/s_d}
  \end{equation*}
  for any choice of $0 \leq \lambda_d \leq 1$ with
  $\sum_{d\in\Delta}\lambda_d = 1$. We choose
  \begin{equation*}
    \lambda_d := s_d t_d + \theta/\abs{\Delta},
  \end{equation*}
  where $\theta := 1 - \sum_{d\in\Delta}s_dt_d \in (0,1)$. With $\theta_d := \theta/(\abs{\Delta}s_dt_d)$, this gives
  \begin{equation*}
    I_{<}(H)\ll \int_{\substack{\vg_{d_0}\in V^{t_{d_0}}\\N(\vg_{d_0})\leq H^b\\\abs{\vg_{d_0}}\geq H}}\frac{1}{(1+\abs{\vg_{d_0}})^{t_{d_0}(1 + \theta_{d_0})}}\dd\vg_{d_0}\prod_{\substack{d\in\Delta\\d\neq d_0}}\int_{\substack{\vg_d\in V^{t_d}\\N(\vg_d)\leq H^b}}\frac{1}{(1+\abs{\vg_d})^{t_d(1+\theta_d)}}\dd\vg_d.
  \end{equation*}
  Further assuming that $\abs{\vg_{d_0}} = \abs{\vg_{d_0,i_0}}$, we get
  \begin{equation*}
    I_{<}(H)\ll \int_{\substack{\gamma_{d_0,i_0}\in V\\N(\gamma_{d_0,i_0})\leq H^b\\\abs{\gamma_{d_0,i_0}}\geq H}}\frac{1}{(1+\abs{\gamma_{d_0,i_0}})^{1 + \theta_{d_0}}}\dd\gamma_{d_0,i_0}\prod_{\substack{d\in\Delta\\1\leq i \leq t_d\\(d,i)\neq (d_0,i_0)}}\int_{\substack{\gamma_{d,i}\in V\\N(\gamma_{d,i})\leq H^b}}\frac{1}{(1+\abs{\gamma_{d,i}})^{1+\theta_d}}\dd\gamma_{d,i}.
  \end{equation*}
  By Lemma \ref{lem:singint_integral_large} and Lemma
  \ref{lem:singint_integral_small}, this product is $\ll
  H^{-1-\theta_{d_0}+\epsilon+Tb} \ll
  H^{-\theta_{d_0}/2}$ if we choose $\epsilon$ small enough.
\end{proof}

With all our auxiliary results in place, we can now proceed to our
main task, the estimation of the singular integral $\mathfrak{J}$.

\begin{lemma}\label{lem:singularintegral}
  Assume \eqref{eq:degree_condition}. Then the integral defining
  $\mathfrak{J}$ converges absolutely and there is a positive constant
  $\delta$ such that
  \begin{equation*}
    \mathfrak{J} - \mathfrak{J}(H) \ll H^{-\delta}   
  \end{equation*}
  holds for all large enough $H$.
\end{lemma}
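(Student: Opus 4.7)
The plan is to establish the quantitative bound
\begin{equation*}
\int_{\{\vg\in V^T\where \abs{\vg}>H\}} \abs{J(\vg)}\dd\vg \ll H^{-\delta},
\end{equation*}
from which both the absolute convergence of $\mathfrak{J}$ and the rate $\mathfrak{J}-\mathfrak{J}(H)\ll H^{-\delta}$ follow at once, since formally $\mathfrak{J}-\mathfrak{J}(H) = \int_{\abs{\vg}>H} J(\vg)\dd\vg$. I decompose $\{\abs{\vg}>H\}$ dyadically into shells $S_k := \{\vg\in V^T\where H_k<\abs{\vg}\leq 2H_k\}$ with $H_k := 2^kH$, $k\geq 0$, and within each shell I separate $S_k\cap M_<(H_k)$ from $S_k\cap M_>(H_k)$, having first fixed $b\in(0,1/T)$ so as to satisfy the hypothesis of Lemma~\ref{lem:singint_norm_small_estimate}.

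The easy case is $S_k\cap M_<(H_k)$. Here the bound \eqref{eq:J_estimate_without_u} of Lemma~\ref{lem:singint_J_estimate} gives $\abs{J(\vg)}\ll H_k^\epsilon\max_{d\in\Delta}\{\abs{\vg_d}^{1/s_d}\}^{-1}$ (applied for any $d$ with $\vg_d\neq 0$, which exists since $\abs{\vg}>1$), so that
\begin{equation*}
\int_{S_k\cap M_<(H_k)}\abs{J(\vg)}\dd\vg \ll H_k^\epsilon\, I_<(H_k) \ll H_k^{-\delta_1}
\end{equation*}
for some $\delta_1>0$ by Lemma~\ref{lem:singint_norm_small_estimate}, provided $\epsilon$ is chosen small enough.

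The delicate case is $S_k\cap M_>(H_k)$. Lemma~\ref{lem:singint_J_estimate}, applied with $a=n$, furnishes for each such $\vg$ an index $d^*=d^*(\vg)\in\Delta$ with $N(\vg_{d^*})\geq H_k^b$ and a unit $u_{d^*}=u_{d^*}(\vg)\in\units$ with $\abs{u_{d^*}}\ll H_k$ such that $\abs{J(\vg)}\ll H_k^\epsilon\abs{u_{d^*}\vg_{d^*}}^{-n/s_{d^*}}$. The main obstacle is that $u_{d^*}$ depends on $\vg$, which prevents a direct comparison with $I_>(\vu,H_k)$ for a fixed tuple $\vu$. I overcome this via Dirichlet's unit theorem: letting $r$ denote the rank of $\units$, the number of $u\in\units$ with $\abs{u}\ll H_k$ is $\ll(\log H_k)^r$. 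For each of the $\abs{\Delta}\cdot O((\log H_k)^r)$ possible pairs $(d^*,u^*)$, I extend $u^*$ to a tuple $\vu=(u_d)_{d\in\Delta}$ by setting $u_{d^*}:=u^*$ and $u_d:=1$ for $d\neq d^*$; then on the subset of $S_k\cap M_>(H_k)$ where $(d^*(\vg),u_{d^*}(\vg))=(d^*,u^*)$ one has $\abs{J(\vg)}\ll H_k^\epsilon\max_{d\in\Delta}\{\abs{u_d\vg_d}^{n/s_d}\}^{-1}$, simply because the maximum is bounded below by the $d^*$-term. Summing over the $(d^*,u^*)$ and invoking Lemma~\ref{lem:singint_normbig} yields
\begin{equation*}
\int_{S_k\cap M_>(H_k)}\abs{J(\vg)}\dd\vg \ll H_k^\epsilon(\log H_k)^r H_k^{-\delta b/n}\ll H_k^{-\delta_2}
\end{equation*}
for some $\delta_2>0$, when $\epsilon$ is small enough. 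Combining the two cases and summing the resulting geometric series over $k\geq 0$ gives the desired bound for some positive $\delta$.
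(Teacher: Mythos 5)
Your decomposition into dyadic shells, the split into $M_<$ and $M_>$, the choice of $b$, and the treatment of the $M_<$ contribution all match the paper's proof and are correct. The gap is in the $M_>$ contribution, where you commit an inequality-direction error that cannot be repaired without reverting to the paper's strategy.

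You set $u_{d^*}:=u^*$ and $u_d:=1$ for $d\neq d^*$ and then claim
\[
\abs{J(\vg)}\ll H_k^{\epsilon}\max_{d\in\Delta}\bigl\{\abs{u_d\vg_d}^{n/s_d}\bigr\}^{-1}
\]
``because the maximum is bounded below by the $d^*$-term.'' But $\max_{d}\{\abs{u_d\vg_d}^{n/s_d}\}\geq\abs{u_{d^*}\vg_{d^*}}^{n/s_{d^*}}$ means $\max_{d}\{\abs{u_d\vg_d}^{n/s_d}\}^{-1}\leq\abs{u_{d^*}\vg_{d^*}}^{-n/s_{d^*}}$: the reciprocal of the maximum is the \emph{smaller} quantity. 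What Lemma~\ref{lem:singint_J_estimate}(2) gives you is $\abs{J(\vg)}\ll H_k^{\epsilon}\abs{u_{d^*}\vg_{d^*}}^{-n/s_{d^*}}$, i.e.\ $J$ is bounded by the \emph{larger} quantity; this does not imply the bound by the smaller one. The inference runs the wrong way, so the comparison with $I_>(\vu,H_k)$ is not justified.

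The error is not cosmetic. If you instead integrate what you actually proved, namely $\abs{u^*\vg_{d^*}}^{-n/s_{d^*}}$, over the slice of $M_>(H_k)$ where $(d^*(\vg),u_{d^*}(\vg))=(d^*,u^*)$, the integrand is independent of the components $\vg_d$ with $d\neq d^*$, which range over a box of diameter $\asymp H_k$ in $V^{T-t_{d^*}}$; you pick up a useless factor $\asymp H_k^{n(T-t_{d^*})}$ and the integral does not decay. The point of taking the maximum over all $d$ inside $I_>(\vu,H)$ is precisely that on a shell with $\abs{\vg}\asymp H_k$, at least one block $\vg_d$ is large, and the transformation $\phi(\vg)=(u_d\gamma_{d,i})_{d,i}$ pushes $M_>(H_k)$ into $M(H_k^{b/n})$ so that Lemma~\ref{lem:singint_without_u} applies. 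The correct route, as in the paper, is to invoke Lemma~\ref{lem:singint_J_estimate}(2) with $a=n$ simultaneously for \emph{every} $d\in\Delta$ with $\vg_d\neq 0$, producing one unit $u_d$ per block, and then take the minimum of the resulting upper bounds: this genuinely yields $\abs{J(\vg)}\ll\abs{\vg}^{\epsilon}\max_{d\in\Delta}\{\abs{u_d\vg_d}^{n/s_d}\}^{-1}$. Since each $\abs{u_d}\ll 2^jH$, the number of admissible \emph{tuples} $\vu$ is still $\ll(2^jH)^{\epsilon}$ by Dirichlet's unit theorem, so you can sum over them and apply Lemma~\ref{lem:singint_normbig}.
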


\begin{proof}
  Let us fix $b := 1/T$. We have
  \begin{align*}
    \mathfrak{J}-\mathfrak{J}(H) &\ll \int_{\abs{\vg}> H}\abs{J(\vg)}\dd\vg =\sum_{j=0}^\infty\int_{2^jH< \abs{\vg}\leq 2^{j+1}H}\abs{J(\vg)}\dd\vg\\
    &\ll \sum_{j=0}^{\infty}\left(\int_{M_{<}(2^jH)}\abs{J(\vg)}\dd\vg
      + \int_{M_{>}(2^j H)}\abs{J(\vg)}\dd\vg\right).
  \end{align*}
  We first consider the integrals over $M_{<}(2^jH)$. Here, we
  estimate $\abs{J(\vg)}$ by \eqref{eq:J_estimate_without_u} and
  obtain
  \begin{equation*}
    \sum_{j=0}^\infty\int_{M_{<}(2^jH)}\abs{J(\vg)}\dd\vg \ll\sum_{j=0}^{\infty}(2^{j}H)^\epsilon I_{<}(2^jH)\ll H^{\epsilon-\delta}\sum_{j=0}^\infty 2^{j(\epsilon-\delta)} \ll H^{-\delta/2},
  \end{equation*}
  by Lemma \ref{lem:singint_norm_small_estimate}, if $\epsilon$ was
  chosen small enough.

  For the integrals over $M_{>}(2^jH)$, we use the estimates from
  \emph{(1)} and \emph{(2)} in Lemma \ref{lem:singint_J_estimate} with
  $a = n$. We obtain
  \begin{align*}
    \sum_{j=0}^\infty\int_{M_{>}(2^jH)}\abs{J(\vg)}\dd\vg \ll
    \sum_{j=0}^\infty(2^jH)^\epsilon\sum_{\substack{\vu =
        (u_d)_{d\in\Delta}\\u_d\in\units\\\abs{u_d}\ll
        2^jH}}I_{>}(\vu, 2^jH).
  \end{align*}
  By Lemma \ref{lem:singint_normbig}, we have $I_{>}(\vu, 2^j) \ll
  (2^{j}H)^{-\delta}$. Moreover, it is well known that the number of
  units $u \in \units$ with $\abs{u}\ll 2^jH$ is is $\ll \log(2^j
  H)^{\abs{\archplaces}-1}$. Hence, the inner sum in the above
  expression has $\ll (2^jH)^\epsilon$ summands. Altogether, we see
  that
  \begin{equation*}
    \sum_{j=0}^\infty\int_{M_{>}(2^jH)}\abs{J(\vg)}\dd\vg \ll H^{2\epsilon-\delta}\sum_{j=0}^\infty2^{j(2\epsilon-\delta')} \ll H^{-\delta/2},
  \end{equation*}
  if $\epsilon$ was chosen small enough.
\end{proof}

Our Theorem \ref{thm:main} is now an immediate consequence of the estimation of the minor arcs in Section \ref{sec:minorarcs} and the treatment of the major arcs in Lemma \ref{lem:major_arcs_asympt}, Lemma \ref{lem:singularseries}, and Lemma \ref{lem:singularintegral}.

\begin{acknowledgements}
  We would like to thank Prof.~Tim Browning, Prof.~Jörg Brüdern and
  Dr.~Damaris Schindler for helpful discussions, and Prof.~Christopher
  Skinner for useful and encouraging remarks. The first-named author
  was supported by a Humboldt Research Fellowship for Postdoctoral
  Researchers of the Alexander von Humboldt Foundation. Major parts of
  the present work were established when the second-named author was a
  visitor of the Institut f\"ur Algebra, Zahlentheorie und Diskrete
  Mathematik at Leibniz Universit\"at Hannover. He thanks the
  institute for its hospitality.
\end{acknowledgements}

\bibliographystyle{alpha}
\bibliography{bibliography}
\end{document}